\theoremstyle{thmstyleone}%
\newtheorem{theorem}{Theorem}
\newtheorem{proposition}[theorem]{Proposition}%
\newtheorem{lemma}[theorem]{Lemma}%
\newtheorem{corollary}[theorem]{Corollary}%
\theoremstyle{thmstyletwo}%
\newtheorem{remark}{Remark}%
\theoremstyle{thmstylethree}%
\newtheorem{definition}{Definition}%
\newcommand{\ie}{\textit{i.e.}}
\newcommand{\Grobner}{Gr\"obner}
\newcommand{\RnonNeg}{\ensuremath{\mathbb{R}_{\geq 0}}}
\newcommand{\Rpos}{\ensuremath{\mathbb{R}_{> 0}}}
\newcommand{\R}{\ensuremath{\mathbb{R}}}
\newcommand{\abs}[1]{\ensuremath{\left\vert#1\right\vert}}
\newcommand{\val}[1]{\ensuremath{\mathrm{val}\left(#1\right)}}
\newcommand{\dist}{\ensuremath{\mathrm{d}}}
\newcommand{\B}{\ensuremath{\mathrm{B}}}
\newcommand{\N}{\ensuremath{\mathbb{N}}}
\newcommand{\K}{\ensuremath{\mathbb{K}}}
\newcommand{\x}{\ensuremath{\mathbf{x}}}
\newcommand{\X}{\ensuremath{\DOTS{x_1}{x_n}}}
\newcommand{\KX}{\ensuremath{\K[\X]}}
\newcommand{\Kx}{\ensuremath{\K[\x]}}
\newcommand{\KXX}{\ensuremath{\K[[\X]]}}
\newcommand{\Kxx}{\ensuremath{\K[[\x]]}}
\newcommand{\sys}{\ensuremath{\mathfrak{X}^{<}_{R}}}
\newcommand{\NF}[1]{\ensuremath{\mathrm{NF}\left(#1\right)}}
\newcommand{\supp}[1]{\ensuremath{\mathrm{supp}\left(#1\right)}}
\newcommand{\coeff}[2]{\ensuremath{\left\langle#1\,\middle\vert\,#2\right\rangle}}
\newcommand{\op}[1]{\ensuremath{{#1}^{\mathrm{op}}}}
\newcommand{\LM}[2]{\ensuremath{\mathrm{lm}\left(#2\right)}}
\newcommand{\LC}[2]{\ensuremath{\mathrm{lc}\left(#2\right)}}
\newcommand{\ens}[2]{\ensuremath{\left\lbrace#1\,\middle\vert\,#2\right\rbrace}}
\newcommand{\DOTS}[3][,]{\ensuremath{{#2}#1{\cdots}#1{#3}}}
\newcommand{\functionDefinition}[5]{\ensuremath{
    \begin{tabular}{rccl}
        \ensuremath{#1\colon} & \ensuremath{#2} & $\longrightarrow$ &
            \ensuremath{#3} \\
                              & \ensuremath{#4} & $\longmapsto$     &
            \ensuremath{#5}
    \end{tabular} 
}}
\newcommand{\dTop}[1]{\ensuremath{\tau_{#1}^{\mathrm{dis}}}}
\newcommand\tikzcdcirclelength{6.75pt}
\newcommand\tikzcdcirclesep{-3.75pt}
\newcommand\tikzcddefaultlinewidth{0.4pt}
\tikzset{semithick}},
\tikzset{thick}},
\tikzset{semithick}},
\newcommand{\rew}[1][]{%
    \begin{tikzcd}[ampersand replacement=\&,cramped,sep=small]
        \ar{r}[']{#1}\&{}
    \end{tikzcd}}
\newcommand{\reflRew}[1][]{%
    \begin{tikzcd}[ampersand replacement=\&,cramped,sep=small]
        \ar{r}{=}[']{#1}\&{}
    \end{tikzcd}}
\newcommand{\transReflRew}[1][]{%
    \begin{tikzcd}[ampersand replacement=\&,cramped,sep=small]
        \ar{r}{\star}[']{#1}\&{}
    \end{tikzcd}}
\newcommand{\topRew}[1][]{%
    \begin{tikzcd}[ampersand replacement=\&,cramped,sep=small]
        \ar[circle]{r}[']{#1}\&{}
    \end{tikzcd}}
\newcommand{\topInvRew}[1][]{%
    \begin{tikzcd}[ampersand replacement=\&,cramped,sep=small]
        {}\&\ar[circle]{l}{#1}
    \end{tikzcd}}
\newcommand{\topSymRew}[1][]{%
    \begin{tikzcd}[ampersand replacement=\&,cramped,sep=normal]
        \ar[leftrightcircle]{r}[']{#1}\&{}
    \end{tikzcd}}
\newcommand{\topEquivRew}[1][]{%
    \begin{tikzcd}[ampersand replacement=\&,cramped,sep=normal]
        \ar[leftrightcircle]{r}{\star}[']{#1}\&{}
    \end{tikzcd}}
\newcommand{\topChainsRew}[1][]{%
    \begin{tikzcd}[ampersand replacement=\&,cramped,sep=small]
        \ar[circle, tail]{r}[']{#1}\&{}
    \end{tikzcd}}
\newcommand{\topChainsInvRew}[1][]{%
    \begin{tikzcd}[ampersand replacement=\&,cramped,sep=small]
        {}\&\ar[circle, tail]{l}{#1}
    \end{tikzcd}}
\newcommand{\topChainsSymRew}[1][]{%
    \begin{tikzcd}[ampersand replacement=\&,cramped,sep=normal]
        \ar[leftrightcircle]{r}[marking]{\diamond}[']{#1}\&{}
    \end{tikzcd}}
\newcommand{\topChainsEquivRew}[1][]{%
    \begin{tikzcd}[ampersand replacement=\&,cramped,sep=normal]
        \ar[leftrightcircle]{r}{\star}[marking]{\diamond}[']{#1}\&{}
    \end{tikzcd}}
\newcommand{\squigRew}[1][]{%
    \begin{tikzcd}[ampersand replacement=\&,cramped,sep=small]
        \ar[squiggly]{r}[']{#1}\&{}
    \end{tikzcd}}
\newcommand{\squigInvRew}[1][]{%
    \begin{tikzcd}[ampersand replacement=\&,cramped,sep=small]
        {}\&\ar[squiggly]{l}{#1}
    \end{tikzcd}}
\newcommand{\squigReflRew}[1][]{%
    \begin{tikzcd}[ampersand replacement=\&,cramped,sep=small]
        \ar[squiggly]{r}{=}\&{}
    \end{tikzcd}}
\newcommand{\squigReflInvRew}[1][]{%
    \begin{tikzcd}[ampersand replacement=\&,cramped,sep=small]
        {}\&\ar[squiggly]{l}[']{=}
    \end{tikzcd}}
\newcommand{\squigTransReflRew}[1][]{%
    \begin{tikzcd}[ampersand replacement=\&,cramped,sep=small]
        \ar[squiggly]{r}{\star}[']{#1}\&{}
    \end{tikzcd}}
\newcommand{\squigTransReflInvRew}[1][]{%
    \begin{tikzcd}[ampersand replacement=\&,cramped,sep=small]
        {}\&\ar[squiggly]{l}[']{\star}{#1}
    \end{tikzcd}}
\newcommand{\squigSymRew}[1][]{%
    \begin{tikzcd}[ampersand replacement=\&,cramped,sep=small]
        \ar[squiggly,leftrightarrow]{r}[']{#1}\&{}
    \end{tikzcd}}
\newcommand{\squigEquivRew}[1][]{%
    \begin{tikzcd}[ampersand replacement=\&,cramped,sep=small]
        \ar[squiggly,leftrightarrow]{r}{\star}[']{#1}\&{}
    \end{tikzcd}}
\begin{document}

\title[Computing in complete local equicharacteristic Noetherian rings via
topological rewriting on commutative formal power series]{Computing in complete
local equicharacteristic Noetherian rings via topological rewriting on
commutative formal power series}


\author*{\fnm{Adya} \sur{Musson-Leymarie}}
\email{adya.musson-leymarie@unilim.fr}

\affil{Univ. Limoges, CNRS, XLIM, UMR 7252, F-87000 Limoges}


\abstract{In commutative algebra, the theory of \Grobner{} bases enables one to
    compute in any finitely generated algebra over a given computable field. For
    non-finitely generated algebras however, other methods have to be pursued.
    For instance, it follows from the Cohen structure theorem that standard
    bases of formal power series ideals offer a similar prospect but for
    complete local equicharacteristic rings whose residue field is computable.
    Using the language of rewriting theory, one can characterise \Grobner{}
    bases in terms of confluence of the induced rewriting system. It has been
    shown, so far via purely algebraic tools, that an analogous characterisation
    holds for standard bases with a generalised notion of confluence.
    Subsequently, that result is utilised to prove that two generalised
    confluence properties, where one is actually in general strictly stronger
    than the other, are actually equivalent in the context of formal power
    series. In the present paper, we propose alternative proofs making use of
    tools purely from the new theory of topological rewriting to recover both
    the characterisation of standard bases and the equivalence between
    generalised confluence properties. The objective is to extend the analogy
    between \Grobner{} basis theory together with classical algebraic rewriting
    theory and standard basis theory with topological rewriting theory.}

\keywords{standard bases, complete local rings, Cohen structure theorem,
commutative formal power series, topological rewriting theory}

\pacs[MSC Classification]{13H10, 13F25, 13B35, 68Q42}

\maketitle

\section*{Introduction}\label{sct:intro}
\addcontentsline{toc}{section}{Introduction}

In abstract algebra, one defines algebraic structures in terms of axiomatic
properties of operations. No reference is made to the nature of the elements of
the underlying set(s). Hence, among examples of associative commutative algebras
over a field, say, for instance, the field $\R$ of real numbers, one might
consider simple examples, such as the algebra of real numbers itself or the
$\R$-algebra of complex numbers, but one could also look at more abstract and
intricate sets of elements that can be equipped with a real algebra structure,
like, for example, the $\R$-algebra of smooth functions on an open subset of a
differentiable manifold. As mathematicians, the ability to compute, in the sense
of making calculations based on the operations in an algebraic structure, is
often central in obtaining new results. Even more, physicists, engineers and
other applied scientists require to make effective computations to solve their
problems. One big question then arises: is it enough to make approximations of
the desired result, assuming it is possible to estimate how far off the
approximation actually is from the answer, or do we absolutely require an exact
solution of the problem? The two approaches have their own respective merit and
which one is to be preferred strongly depends on the application. Approximations
are the objects of study in \emph{numerical analysis}. In this paper, we are
concerned with computing exact solutions, even if they might be just symbolic
expressions; this is the domain of \emph{computer algebra} in which one
implements algorithms on computers to perform effective symbolic computations so
it yields an exact answer.

Just as \emph{representation theory} offers a way of computing in even the more
abstract groups by effectively looking at its elements as invertible matrices,
that is, as a finite two-dimensional array of elements of a given field,
associative algebras over a field $\K$ are always isomorphic to the quotient
algebra of a \emph{free $\K$-algebra} modulo one of its ideals. The advantage is
that free algebras are combinatorial and syntactic objects that we know how to
effectively represent on paper or on a computer. Here, we are concerned only with
commutative algebras and, therefore, free $\K$-algebras take the form of the
well-known polynomial algebras with coefficients in $\K$. We can thus take for
granted: we know how to compute in polynomial algebras if we know how to
calculate in $\K$. Now, our problem of computing in the starting associative
algebra is not completely solved, as we want to work in a quotient algebra of a
polynomial algebra. Thankfully, some powerful machinery to help us with that
goal has been developed during the second half of the last century, namely
\emph{commutative \Grobner{} bases}. Their first concrete formulation as we know
them today is attributed to \textsc{Buchberger} in his PhD
thesis~\cite{Buchberger_2006}. \Grobner{} bases and their generalisations
actually have a multitude of very useful applications, especially in commutative
algebra and algebraic geometry (see for instance~\cite{Buchberger_1998,
Greuel_2002}).

The reason why polynomial algebras are of such importance is because of two
things: their syntactic nature and their \emph{universal property} among other
associative commutative algebras, \ie{} that property we mentioned about any
algebra being isomorphic to a quotient of a polynomial algebra. However,
consider a $\K$-algebra $A$ that is not finitely generated, that is to say,
there is no finite subset $\{\DOTS{x_1}{x_n}\} \subseteq A$ that generates $A$
as a $\K$-algebra, \ie{} such that the canonical morphism $\KX \rightarrow A$ is
surjective. This means that for such an algebra $A$, we have to consider
infinitely many indeterminates in our polynomial algebra and, it turns out, many
important results from \Grobner{} basis theory do not apply anymore. However,
alternative tools are available for certain classes of non-finitely generated
$\K$-algebras. Indeed, even before \textsc{Buchberger}, \textsc{Hironaka} had
developed in~\cite{Hironaka_1964, Hironaka_1964a} the notion of \emph{standard
basis} of an ideal of commutative formal power series when he was interested in
the problem of \emph{resolution of singularities} in algebraic geometry (for
more details, see~\cite{Hauser_2003, Spivakovsky_2020}).  As stated in
Proposition~\ref{prop:Kxx-not-finitely-generated}, commutative formal power
series with coefficients in $\K$ are not finitely generated as $\K$-algebras and
therefore, standard bases are useful in that case, where \Grobner{} bases fail
to help.

Now, the \emph{Cohen structure theorem} originally proven in~\cite{Cohen_1946}
gives us insight into a certain class of rings that can be equipped with
the structure of an algebra over an appropriate field. We recall that result in
Theorem~\ref{thm:cohen-structure}. In that theorem,
Statement~\ref{item:Kxx-free-algebra} expresses the kind of universal property
for commutative formal power series that polynomial algebras enjoy but for the
specific class of \emph{complete local equicharacteristic Noetherian rings}.
Those rings actually contain a subfield $\K$, which turns out to be isomorphic
to their so-called \emph{residue field}, and, when seen as $\K$-algebras, they
are isomorphic to a quotient of a commutative formal power series algebra in
finitely many indeterminates. Moreover, Statement~\ref{item:sub-algebra}
enables us to prove
Proposition~\ref{prop:comp-loc-equi-noeth-ring-not-finitely-generated} which
states that even those complete local equicharacteristic Noetherian rings are
not finitely generated when considered as algebras over their residue field and,
therefore, \Grobner{} bases are of no use. Among examples of complete local
equicharacteristic Noetherian rings, one can find commutative formal power
series rings, and complete local Cohen-Macaulay rings to name a
few.

So far, we have given motivations for the notion of standard basis by comparing
it with \Grobner{} bases. However, unlike for \Grobner{} bases and so-called
algebraic or linear rewriting theory, there is a lack of literature on
the subject of standard bases viewed from \emph{rewriting theory}. But first,
what is rewriting theory? It is a language to formalise the concept of sequences
of computations. It originally comes from group theory in order to solve
decision problems and it then got adapted in computer science and, more
precisely, to fulfill the needs to formalise and unify certain models of
computations in computability theory as well as give explicit formal systems in
logic. Rewriting theory suffers from a lack of easily accessible literature on
the topic if it were not for the two standard references~\cite{Baader_1999,
Terese_2003}. Now, despite rewriting theory being originally motivated by its
use in group theory, logic, term rewriting and lambda-calculus, it has proven to
be exceptionally powerful at providing a language for the theory of \Grobner{}
bases (see~\cite{Bergman_1977, Winkler_1989}). In that context, a \Grobner{}
basis is characterised by the so-called \emph{confluence} property of the
rewriting system it induces, which exactly means that the system enjoys an
underlying determinism. More precisely, this confluence property states that, no
matter what path of sequences of computation one chooses to reduce the
polynomial, one always reaches the same result, called the \emph{normal form of
the polynomial}.  Such normal forms can be used as canonical representatives of
the equivalence classes in the quotient algebra.  This allows to keep our
computations in the polynomial algebra, where we know how to calculate, and, at
each step, reducing to normal form to get the corresponding equivalence class.

As we mentioned however, standard bases had so far not been investigated from a
rewriting-theoretic perspective, strictly speaking. Only recently
in~\cite{Chenavier_2020}, \textsc{Chenavier} used algebraic tools to
characterise standard bases of commutative formal power series with the notion
of $\delta$-confluence, a generalisation of the classical confluence property.
The author also introduced a generalisation of the transitive
reflexive closure of the one-step reduction relation which is studied in
classical rewriting theory. That generalised relation, called the
\emph{topological rewriting relation}, depends on a topology on the underlying
set of the rewriting system and, therefore, that new theory is called
\emph{topological rewriting theory}. It provides a framework in which one
can investigate more efficiently rewriting systems which fail to be confluent or
\emph{terminating} in the classical sense. This includes formal power series for
the algebraists but also, infinitary term-rewriting and infinitary
lambda-calculus~\cite{Dershowitz_1991, Kennaway_1995, Kennaway_1997} for the
computer scientists. However, the generalised confluence property studied in
infinitary $\Sigma/\lambda$-term rewriting is different syntactically from the
$\delta$-confluence. We will call the former \emph{infinitary topological
confluence} property and the latter \emph{finitary topological confluence}.
From the general point of view of topological rewriting theory, it is shown
in~\cite{Chenavier_2025} that infinitary topological confluence is strictly
stronger than finitary topological confluence. However, the authors used
the above rewriting-theoretic characterisation of standard bases in terms
of $\delta$-confluence to show that infinitary and finitary topological
confluences are actually equivalent in commutative formal power series.

The goal of the present paper consists in giving a more rewriting-theoretic
approach to standard bases of commutative formal power series than before
presented in the mentioned literature. We will prove by new and more
elementary means the results of~\cite{Chenavier_2020} about the characterisation
of standard bases in terms of finitary topological confluence and
of~\cite{Chenavier_2025} about the equivalence between the notions of
generalised confluence. This will be part of the main contribution of this
article stated among the results of Theorem~\ref{thm:charac-standard-bases}.
That theorem states the properties that are analogous to the \Grobner{} basis
theory but for standard bases. We will also prove in
Theorem~\ref{thm:decide-ideal} that the equivalence relation generated by the
topological rewriting relation is exactly the congruence relation modulo the
ideal generated by the rewrite rules.

In Section~\ref{sct:preliminaries}, we will first introduce in
Subsection~\ref{ssct:complete-local-rings} the notions of local
rings, complete local rings, complete local equicharacteristic Noetherian rings,
then, in Subsection~\ref{ssct:formal-power-series} we will present monomial
orders, commutative formal power series, the Cohen structure theorem and some of
its consequences. In Section~\ref{sct:top-rew-metric-spaces}, we start by
recalling the basics of topological rewriting theory, then we give new results
regarding normal forms and some uniqueness properties and conclude
Subsection~\ref{ssct:topological-rewriting} with the definitions of infinitary
and finitary topological confluences. In
Subsection~\ref{ssct:attractivity-of-nf}, we study some notion of
attractivity of normal forms formalised in the context of topological rewriting
systems over a metric space. We follow by
Subsection~\ref{ssct:fin-top-conf-implies-unique-nf} in which we prove that,
under some previously defined assumptions, the finitary topological confluence
property implies the unique normal form property. Then, in
Subsection~\ref{ssct:rew-on-formal-power-series}, we remind the reader about the
reduction on formal power series that gives rise to the topological rewriting
system we want to study. In Subsection~\ref{ssct:standard-representations}, we
recall the original language in which standard bases were characterised. In
Subsection~\ref{ssct:cong-ideal}, we prove a few useful lemmas and the fact that
the congruence relation modulo the ideal generated by the rewrite rules is
exactly the equivalence relation generated by the topological rewriting relation
(with or without chains). Finally, we will conclude with the main contribution
of the paper: characterisations of standard bases in terms of rewriting theory.

\section{Preliminary notions}\label{sct:preliminaries}

\subsection{Complete local Noetherian rings}\label{ssct:complete-local-rings}

The basic object of interest for our purposes are the so-called \emph{local
rings}. Those are the basis of many modern theories on manifolds and schemes in
differential and algebraic geometries, as the main object of study in those
cases can be viewed as a \emph{locally-ringed space}, that is to say, a
topological space (with potentially additional properties) together with a sheaf
of rings (or algebras) such that the stalk above each point is a local ring.
Prime examples of those local rings as stalks are sets of germs of continuous,
differentiable, smooth, analytic, holomorphic, or rational functions on the base
space or topological manifold.

\begin{definition}[Local ring]\label{def:local-ring}
    Let $K$ be a commutative ring. If $K$ admits a unique maximal ideal, then
    $K$ is said to be a \emph{local ring}, where a maximal ideal is any ideal
    $I$ of $K$ such that we have $I \neq K$ and, for every ideal $J$ of $K$, if
    $I \subsetneq J$, then $J = K$.
\end{definition}

If $K$ is a local ring and $\mathfrak{m}$ is a maximal ideal of $K$, we write
$(K, \mathfrak{m})$ to express the fact that $K$ is a local ring with unique
maximal ideal $\mathfrak{m}$. By well-known basic results of commutative
algebra, if $(K, \mathfrak{m})$ is a local ring, then the quotient ring $\kappa
:= K/\mathfrak{m}$ is actually a field, since $\mathfrak{m}$ is maximal. We call
that field $\kappa$ the \emph{residue field} of the local ring $(K,
\mathfrak{m})$. The topology on rings we are concerned with in our context is
the \emph{$I$-adic topology} which is induced by an ideal $I$ in the ring at
study.

\begin{definition}[$I$-adic topology]\label{def:adic-topology}
    Let $K$ be a commutative ring and $I$ be one of its ideals. We define the
    \emph{$I$-adic topology on $K$} as the topology whose basis of open subsets
    is given by the sets:
    \[
        \ens{x + I^k}{x \in K, k \in \N}.
    \]
    Endowed with that $I$-adic topology, $K$ is a \emph{topological ring}.
\end{definition}

In general, this topology is not metrisable. However, a consequence of the
Artin-Rees lemma~\cite{Rees_1956} known as the \emph{Krull's Intersection
Theorem} allows us to describe a metric compatible with this topology when the
ring is commutative Noetherian as well as an integral domain or a local ring.

\begin{proposition}\label{prop:adic-topology-metric}
    Let $K$ be a commutative Noetherian ring which is an integral domain or a
    local ring. For any proper ideal $I$ of $K$, the following map $d_I$:
    \[
        \functionDefinition{d_I}{K \times K}{\RnonNeg}{(a, b)}{d_I(a, b) :=
        \dfrac{1}{2^{\sup{\ens{k \in \N}{a - b \in I^k}}}},}
    \]
    with the convention that, if the set $\ens{k \in \N}{a - b \in I^k}$ is not
    bounded above, then the supremum is equal to $\infty$ and, subsequently,
    $2^{-\infty} = 0$, defines a metric on $K$ which is compatible with the
    $I$-adic topology.
\end{proposition}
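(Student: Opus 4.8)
The plan is to treat $d_I$ as arising from an order function and to verify the metric axioms one at a time, reserving the genuine work for the separation axiom and for the identification of the metric balls. First I would introduce the function $w\colon K \to \N \cup \{\infty\}$ defined by $w(x) := \sup\ens{k \in \N}{x \in I^k}$, built from the decreasing filtration $K = I^0 \supseteq I^1 \supseteq I^2 \supseteq \cdots$, so that $d_I(a,b) = 2^{-w(a-b)}$ under the stated conventions. Non-negativity is then immediate, and symmetry follows at once from each $I^k$ being closed under negation, whence $a - b \in I^k$ if and only if $b - a \in I^k$, so that $w(a-b) = w(b-a)$.

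The separation axiom $d_I(a,b) = 0 \iff a = b$ is where the hypotheses on $K$ intervene, and I expect this to be the main obstacle. One direction is clear: $d_I(a,a) = 2^{-\infty} = 0$ because $0 \in I^k$ for every $k$. For the converse, $d_I(a,b) = 0$ forces $w(a-b) = \infty$, that is $a - b \in \bigcap_{k \in \N} I^k$, so it suffices to show this intersection is $\{0\}$. Here I would invoke the Krull Intersection Theorem, the consequence of the Artin--Rees lemma alluded to before the statement: it produces some $i \in I$ with $(a-b)(1-i) = 0$. In the integral-domain case, $a - b \neq 0$ would force $1 - i = 0$, i.e. $i = 1 \in I$, contradicting $I \neq K$; in the local case, $i$ lies in the unique maximal ideal (which contains every proper ideal), so $1 - i$ is a unit. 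In both situations $a - b = 0$, which is precisely where the two alternative hypotheses are used.

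For the triangle inequality I would prove the stronger ultrametric inequality. The nesting of the powers gives $I^m + I^n \subseteq I^{\min(m,n)}$, so from the telescoping $a - c = (a-b) + (b-c)$ one obtains $w(a-c) \geq \min\bigl(w(a-b), w(b-c)\bigr)$, and hence $d_I(a,c) \leq \max\bigl(d_I(a,b), d_I(b,c)\bigr)$; this dominates the ordinary inequality $d_I(a,c) \leq d_I(a,b) + d_I(b,c)$, establishing that $d_I$ is a metric (in fact an ultrametric).

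Finally, to prove compatibility with the $I$-adic topology I would compute the open balls explicitly. Since $w$ takes values in $\N \cup \{\infty\}$, for any $x \in K$ and $k \in \N$ the inequality $d_I(x,y) < 2^{-k}$ is equivalent to $w(x-y) \geq k+1$, i.e. to $x - y \in I^{k+1}$, so that $B(x, 2^{-k}) = x + I^{k+1}$, while balls of radius larger than $1$ equal all of $K = x + I^0$. As $d_I$ attains only the values $0$ and $2^{-k}$, the balls $B(x,2^{-k})$ form a neighbourhood basis for the metric topology at each point; they are therefore exactly the sets $x + I^{k}$ that constitute the declared basis of the $I$-adic topology. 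Since the two families generate the same open sets, the topologies coincide.
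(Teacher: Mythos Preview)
Your proof is correct. The paper does not actually supply a proof of this proposition: it is stated without proof, with the preceding sentence pointing to Krull's Intersection Theorem (as a consequence of the Artin--Rees lemma) as the relevant input. Your argument unpacks precisely that hint---using Krull's Intersection Theorem to establish $\bigcap_k I^k = \{0\}$ under either hypothesis, then verifying the remaining metric axioms and identifying the open balls with the cosets $x + I^k$---so you are filling in exactly the details the paper chose to omit rather than taking a different route.
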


It is then natural to ask about the metric space completion of an $I$-adic
topology induced by a metric.

\begin{definition}[$I$-adic completion]\label{def:adic-completion}
    Let $K$ be a commutative Noetherian ring and $I$ an ideal of $K$ in such a
    way that $(K, d_I)$ is a metric space. Then, the \emph{$I$-adic completion}
    of $K$ is the metric space completion $(\hat{K}, \hat{d}_I)$ of $(K, d_I)$.
\end{definition}

\begin{remark}    
    Actually, the $I$-adic completion need not be defined only for topological
    rings which are also metric spaces but we restrict ourselves to that case
    here for simplicity.
\end{remark}

Since $K$ with the $I$-adic topology is a topological ring, by continuity of
operations, the $I$-adic completion $\hat{K}$ together with the topology induced
by the metric $\hat{d}_I$ is a \emph{complete topological ring} which contains
$K$ as a subring. Let us show that the $\hat{I}$-adic topology on $\hat{K}$, where $\hat{I}$
is the ideal generated by $I$ in $\hat{K}$ when $I$ is viewed as a subset of
$\hat{K}$, is the same as the topology induced by $\hat{d}_I$ on $\hat{K}$. 
First, recall that it can be shown by induction that, for any $r \in \N$, we
have $\hat{I}^r = \hat{K}I^r$. Now, it suffices to show that, for any point $x$
in any basic open subset $U$ in one of the topologies, there exists a basic open
subset $V$ in the other topology such that $x \in V \subseteq U$.

In one direction, let $x \in \hat{K}$ and $\rho \in \Rpos$. Consider $r$ the
smallest integer such that $2^{-r} < \rho$. Let $y \in x + \hat{I}^r$ and take
$(x_k)_{k \in \N}$ and $(y_k)_{k \in \N}$ two Cauchy sequences in $K$ that
represent $x$ and $y$ respectively in $\hat{K}$. Hence, there exists $\hat{a}
\in \hat{I}^r$ such that $\lim_{k \to \infty} x_k - y_k = \hat{a}$. But, by the
reminder above, it follows that there exist $\ell \in \N$,
$\DOTS{\hat{z}_1}{\hat{z}_\ell} \in \hat{K}$ and $\DOTS{a_1}{a_\ell} \in I^r$
such that $\hat{a} = \sum_{i = 1}^{\ell} \hat{z}_i a_i$. Consider, for each $i
\in \{\DOTS{1}{\ell}\}$, a Cauchy sequence $(z_{i,k})_{k \in \N}$ in $K$ that
represents $\hat{z}_i$ in $\hat{K}$. It follows that:
\[
  \lim_{k \to \infty} \sum_{i = 1}^{\ell} z_{i,k} a_i = \lim_{k \to \infty} x_k
  - y_k
\]
It is clear that, for any $k \in \N$, the set $\ens{j \in \N}{\sum_{i =
1}^{\ell} z_{i,k}a_i \in I^j}$ contains $r$ by construction. Hence, its supremum
is well-defined and is greater than or equal to $r$. 

Now, for $k \in \N$, we get:
\begin{align*}
  d_I(x_k, y_k) &= 2^{-\sup\ens{j \in \N}{x_k - y_k \in I^j}} = d_I(x_k - y_k, 0)
  \\
  d_I(x_k, y_k)
    &\leq d_I\left(x_k - y_k, \sum_{i = 1}^{\ell} z_{i,k}a_i\right) +
  d_I\left(\sum_{i = 1}^{\ell} z_{i,k}a_i, 0\right) &&\text{triangular
  inequality}\\
  d_I(x_k, y_k)
    &\leq d_I\left(x_k - y_k, \sum_{i = 1}^{\ell} z_{i,k}a_i\right) +
    2^{-\sup\ens{j \in \N}{\sum_{i = 1}^{\ell} z_{i,k}a_i \in I^j}} &&\text{by
    definition} \\
  d_I(x_k, y_k) 
    &\leq d_I\left(x_k - y_k, \sum_{i = 1}^{\ell} z_{i,k}a_i\right) + 2^{-r}
    &&\text{see above}
\end{align*}

Finally, by making $k$ tend to infinity, we get:
\begin{align*}
  \hat{d}_I(x, y)
    = \lim_{k \to \infty} d_I(x_k, y_k) &\leq \lim_{k \to \infty} d_I\left(x_k -
      y_k, \sum_{i = 1}^{\ell} z_{i,k}a_i\right) + 2^{-r} \\
  \hat{d}_I(x, y)
    &\leq 0 + 2^{-r} \\
  \hat{d}_I(x, y)
    &< \rho
\end{align*}

Conversely, let $x \in \hat{K}$ and $r \in \N$. Let us show that there exists
$\rho \in \Rpos$ such that every $y \in \hat{K}$ satisfying $\hat{d}_I(x, y) <
\rho$ is in $x + \hat{I}^r$. Consider $\rho := 2^{-r}$. Let $y \in
\hat{K}$ be such that $\hat{d}_I(x, y) < \rho$. Take $(x_k)_{k \in \N}$ and
$(y_k)_{k \in \N}$ two Cauchy sequences in $K$ representing $x$ and $y$
respectively in $\hat{K}$. It follows that there exists $K_\rho \in \N$ such
that, for all $k \geq K_\rho$, we have $\hat{d}_I(x_k, y_k) = d_I(x_k, y_k) <
\rho = 2^{-r}$, hence, $\sup\ens{j \in \N}{x_k - y_k \in I^j} > r$. This means
that, for any $k \geq K_\rho$, $r \in \ens{j \in \N}{x_k - y_k \in I^j}$ and,
thus, $x_k - y_k \in I^r$. Making $k$ tend towards infinity, we obtain $x - y
\in \hat{I}^r$, thus $y \in x + \hat{I}^r$, as desired.

To understand the following definition, recall that, by
Proposition~\ref{prop:adic-topology-metric}, the $\mathfrak{m}$-adic topology on
a local ring $(K, \mathfrak{m})$ is metrisable by the metric $d_{\mathfrak{m}}$
defined in that proposition.
\begin{definition}[Complete local Noetherian
    ring]\label{def:complete-local-ring}
    Let $(K, \mathfrak{m})$ be a local ring where $K$ is Noetherian. If $(K,
    d_{\mathfrak{m}})$ is complete as a metric space, then we say that $(K,
    \mathfrak{m})$ is a \emph{complete local Noetherian ring}.
\end{definition}

As a complete local Noetherian ring is not necessarily an integral domain (and
thus not a field), the characteristic of the ring can be something else than $0$
or a prime number. It turns out that it can also be a power of a prime number.
However, we will dismiss that possibility by considering only complete local
Noetherian rings that have the same characteristic as their residue field, which
is thus necessarily $0$ or prime.

\begin{definition}[Equicharacteristic]\label{def:equicharacteristic}
    Let $(K, \mathfrak{m})$ be a local ring and $\kappa := K/\mathfrak{m}$ its
    residue field. If the characteristic of $K$ is the same as the
    characteristic of $\kappa$, then $(K, \mathfrak{m})$ is said to be
    \emph{equicharacteristic}.
\end{definition}

\begin{remark}
    For any local ring $(K, \mathfrak{m})$, being equicharacteristic is
    equivalent to containing a subring which is actually a field. Some authors
    prefer to use that other statement and might not even use the term
    equicharacteristic (for instance~\cite{Bourbaki_2006}).
\end{remark}

It is a known result from~\cite{Zariski_1975} that, for an ideal $I$ in a
commutative ring $K$, the ideals of $K$ are topologically closed in the $I$-adic
topology if, and only if, $I$ is contained in the so-called \emph{Jacobson
radical}, which in our context is just the intersection of all maximal ideals of
$K$. In particular, we have the following proposition.
\begin{proposition}\label{prop:local-ring-is-zariski}
    Any ideal in a local ring $(K, \mathfrak{m})$ where $K$ is Noetherian is
    topologically closed for the $\mathfrak{m}$-adic topology.
\end{proposition}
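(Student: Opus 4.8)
The statement follows almost immediately from the result of~\cite{Zariski_1975} recalled just above, so the plan is simply to specialise it. The only point to observe is that, in a local ring $(K, \mathfrak{m})$, the Jacobson radical — being by definition the intersection of all maximal ideals — coincides with the unique maximal ideal $\mathfrak{m}$ itself. In particular $\mathfrak{m}$ is trivially contained in the Jacobson radical, so, taking $I := \mathfrak{m}$ in the cited characterisation, every ideal of $K$ is topologically closed for the $\mathfrak{m}$-adic topology. The Noetherian hypothesis on $K$ is precisely what guarantees that the cited result is applicable.

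For completeness, and to stay in the spirit of elementary arguments, one can also give a direct proof using the Krull Intersection Theorem already invoked in the discussion preceding Proposition~\ref{prop:adic-topology-metric}. First I would identify the closure of an ideal explicitly: since the basic open neighbourhoods of a point $y$ are exactly the cosets $y + \mathfrak{m}^k$ for $k \in \N$, a point $y$ lies in the closure of an ideal $J$ if and only if $(y + \mathfrak{m}^k) \cap J \neq \emptyset$ for every $k \in \N$, which rearranges (using that $\mathfrak{m}^k$ is an additive subgroup) to $y \in J + \mathfrak{m}^k$ for all $k$. Hence the closure of $J$ equals $\bigcap_{k \in \N} (J + \mathfrak{m}^k)$, and it remains to show this intersection is $J$.

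The case $J = K$ is immediate, so I would assume $J$ proper; since $\mathfrak{m}$ is the unique maximal ideal, any proper ideal satisfies $J \subseteq \mathfrak{m}$, and the quotient $K/J$ is again a Noetherian local ring whose maximal ideal is the image $\mathfrak{m}/J$. Under the canonical projection $\pi \colon K \to K/J$, each $J + \mathfrak{m}^k$ contains $\ker \pi = J$ and maps onto $(\mathfrak{m}/J)^k$, so $\bigcap_{k \in \N} (J + \mathfrak{m}^k) = \pi^{-1}\bigl(\bigcap_{k \in \N} (\mathfrak{m}/J)^k\bigr)$. The inner intersection vanishes by the Krull Intersection Theorem applied to $K/J$, whence $\bigcap_{k \in \N} (J + \mathfrak{m}^k) = \pi^{-1}(0) = J$, and $J$ is closed.

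I do not expect any genuine obstacle here: all the content is already packaged in the cited result (equivalently in Krull's Intersection Theorem), and the reduction from an arbitrary complete local ring is not even used — only the local and Noetherian properties matter. The one thing to verify carefully in the direct route is that passing to $K/J$ preserves both of these properties, so that the theorem is legitimately applicable; the computation of the closure as $\bigcap_{k} (J + \mathfrak{m}^k)$ is routine once the $\mathfrak{m}$-adic basic opens are written out.
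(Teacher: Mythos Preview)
Your proposal is correct and its first paragraph is exactly the paper's approach: the paper gives no standalone proof but simply specialises the cited result from~\cite{Zariski_1975} by noting that in a local ring the Jacobson radical is $\mathfrak{m}$ itself. Your supplementary direct argument via Krull's Intersection Theorem on $K/J$ is sound and more self-contained, but goes beyond what the paper provides.
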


\subsection{Formal power series and standard
bases}\label{ssct:formal-power-series}

Throughout this subsection, $n$ is a fixed positive integer, $\X$ are distinct
indeterminates and $\K$ a (commutative) field of arbitrary characteristic.
Denote by $[\X]$ (or $[\x]$ for short) the free commutative monoid generated by
$\{\X\}$ (whose law of composition is denoted multiplicatively by $\cdot$), call
elements of $[\x]$ \emph{monomials} and denote by $1$ the \emph{empty monomial},
\ie{} the identity element of $[\x]$. Elements of $[\x]$ are of the form $\x^\mu
:= \DOTS[]{x_1^{\mu_1}}{x_n^{\mu_n}}$ where $\mu := (\DOTS{\mu_1}{\mu_n}) \in
\N^n$. In particular, we have the identity $1 = \x^{0_n} =
\DOTS[]{x_1^0}{x_n^0}$. Define the \emph{total degree} of a monomial $m :=
\x^\mu \in [\x]$ by the formula $\deg(m) := \abs{\mu} :=
\DOTS[+]{\mu_1}{\mu_n}$.

\begin{definition}[Monomial order]\label{def:monomial-order}
    We call \emph{monomial order on $[\x]$} any total order $\leq$ on $[\x]$
    such that, for all $m, m_1, m_2 \in [\x]$, if we have $m_1 \leq m_2$, then
    $m \cdot m_1 \leq m \cdot m_2$.
\end{definition}

As any partial order, a monomial order $\leq$ is uniquely determined by its
associated strict order $<$. Therefore, we will use the term ``monomial order''
for $\leq$ and $<$ interchangeably. Note how, for any monomial order $<$ on
$[\x]$, the opposite order $\op{<}$ is also a monomial order on $[\x]$.  A
monomial order $<$ on $[\x]$ is said to be \emph{admissible} if, for all $m \in
[\x]$, $1 \leq m$. This is equivalent to saying that $\leq$ is a well-order on
$[\x]$. Some authors~\cite{Greuel_2002} use the term \emph{global order} for
admissible monomial order and call the opposite order of a global order a
\emph{local order}. We say that a monomial order $<$ is \emph{compatible with
the degree} if, for all $m_1, m_2 \in [\x]$ such that $\deg(m_1)$ is strictly
smaller than $\deg(m_2)$, then necessarily $m_1 < m_2$. An example of an
admissible monomial order compatible with the degree is the
``degree-lexicographic order'' (or ``deglex'' for short). An admissible monomial
order compatible with the degree is of order type $\omega$, where $\omega$
denotes as usual the first infinite ordinal. 

Denote by $\KX$ ($\Kx$ for short) the \emph{$\K$-algebra of $n$-multivariate
polynomials with coefficients in $\K$}. By construction, any polynomial $f \in
\Kx$ is a finite linear combination of monomials in $[\x]$ with coefficients in
$\K$ and, thus, it makes sense to define, for any monomial $m \in [\x]$, the
scalar $\coeff{f}{m}$ called the \emph{coefficient in $f$ associated to $m$}. It
follows that the set $\supp{f} := \ens{m \in [\x]}{\coeff{f}{m} \neq 0}$, called
the \emph{support of the polynomial $f$} is finite and that any polynomial $f$
can thus be represented by the finite linear combination $f = \sum_{m \in
\supp{f}} \coeff{f}{m} m$. The set $\K$ can be embedded into $\Kx$ by the
morphism of algebras $\K \ni \lambda \mapsto \lambda 1 \in \Kx$ where, here, $1$
is the empty monomial. The image of $\K$ by that morphism is exactly the set of
\emph{constant polynomials} and it is the complementary subspace in
$\Kx$ to the subspace given by the ideal $I(\DOTS{x_1}{x_n})$ in $\Kx$ generated
by $\{\DOTS{x_1}{x_n}\}$. Denote that ideal by $I(\x)$ for short and note how
it follows that $I(\x)$ is a proper ideal of $\Kx$ since $1$ is a non-zero
constant polynomial. By Hilbert's Basis Theorem, $\Kx$ is Noetherian and, since
we assume $\K$ is a field, also an integral domain, we can conclude that the $I(\x)$-adic
topology on the ring $\Kx$ is metrisable. Denote the associated metric by
$\delta := d_{I(\x)}$ defined in Proposition~\ref{prop:adic-topology-metric}.

We define the topological ring of (commutative) \emph{formal power series in $n$
indeterminates with coefficients in $\K$} as the $I(\x)$-adic completion (see
Definition~\ref{def:adic-completion}) of $(\Kx, \delta)$ and we denote it
$(\KXX, \delta)$ (or $(\Kxx, \delta)$ for short). Since $\K$ is embedded into
the ring $\Kx$ and $\Kx$ into $\Kxx$, it follows that $\Kxx$ is actually a
topological $\K$-algebra by defining the scalar multiplication as the formal
power series multiplication restricted to the subring identified with $\K$. It
is a well-known result that this definition of $\Kxx$ coincides with the
so-called ``large algebra associated to the monoid $[\x]$'' of
\textsc{Bourbaki}~\cite{Bourbaki_2007}: $\Kxx$ is thus defined as the
set of functions from $[\x]$ to $\K$, including those whose support is infinite.
In other words, an element $f$ of $\Kxx$ can be represented as a, possibly
infinite, linear combination of monomials in $[\x]$ with coefficients in $\K$ by
formally writing it as the sum of every monomial $m$ in $[\x]$ preceded by its
associated coefficient, that is to say, the scalar $f(m)$:
\[
    f =: \sum_{m \in [\x]} f(m) m.
\]
Extending the notations for polynomials, we set $\coeff{f}{m} := f(m)$ the
coefficient associated to a monomial $m \in [\x]$ in a formal power series $f
\in \Kxx$ as given by that linear combination representation, as well as
$\supp{f} := \ens{m \in [\x]}{\coeff{f}{m} \neq 0}$ for a formal power series $f
\in \Kxx$. 

Now, since $\delta$ induces the $I(\x)$-adic topology on $\Kxx$ when
$I(\x)$ is viewed as an ideal of $\Kxx$, we can give a somewhat more convenient
definition of $\delta$:
\begin{equation}\label{eq:delta}
    \forall f, g \in \Kxx,\quad \delta(f, g) = \frac{1}{2^{\val{f - g}}},
\end{equation}
where $\val{h}$ is equal to the smallest degree of monomials in $\supp{h}$, for
any non-zero formal power series $h \in \Kxx \setminus \{0\}$ or $\infty$ if $h
= 0$.

Let us prove the following property.
\begin{proposition}\label{prop:Kxx-not-finitely-generated}
    The algebra $\Kxx$ is not finitely generated as a $\K$-algebra.
\end{proposition}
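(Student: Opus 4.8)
The plan is to argue by contradiction through a dimension count over $\K$. First I would record the elementary fact that any finitely generated $\K$-algebra has \emph{at most countable} dimension as a $\K$-vector space: if $A = \K[\DOTS{g_1}{g_r}]$, then $A$ is linearly spanned by the products $\DOTS[]{g_1^{a_1}}{g_r^{a_r}}$ indexed by $(\DOTS{a_1}{a_r}) \in \N^r$, and $\N^r$ is countable. It therefore suffices to prove that $\Kxx$ has \emph{uncountable} $\K$-dimension, since this immediately rules out finite generation.

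Second, I would exhibit an uncountable $\K$-linearly independent family inside $\Kxx$, working only with the single indeterminate $x_1$. The device is an \emph{almost disjoint} family $\{A_\sigma\}_\sigma$ of infinite subsets of $\N$, that is, one whose distinct members intersect in finite sets, of cardinality the continuum. Such a family is produced, for instance, by fixing a bijection between $\N$ and the set of finite binary strings and letting $A_\sigma$ consist of the finite prefixes of an infinite binary sequence $\sigma$; two distinct sequences share only finitely many prefixes, while each $A_\sigma$ is infinite. To each such set I associate the series $f_\sigma := \sum_{k \in A_\sigma} x_1^k$, which is a legitimate element of $\Kxx$ precisely because infinite supports are permitted in the large algebra description of $\Kxx$ recalled above.

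Third, I would verify linear independence of $\{f_\sigma\}$. Suppose $\sum_{j=1}^m c_j f_{\sigma_j} = 0$ for distinct $\DOTS{\sigma_1}{\sigma_m}$ and $c_j \in \K$. Since $A_{\sigma_1}$ is infinite while each intersection $A_{\sigma_1} \cap A_{\sigma_j}$ with $j \geq 2$ is finite, there are infinitely many $k \in A_{\sigma_1}$ belonging to no other $A_{\sigma_j}$; for any such $k$ the coefficient $\coeff{\sum_j c_j f_{\sigma_j}}{x_1^k}$ equals $c_1$, forcing $c_1 = 0$, and the same argument with the roles of the indices permuted gives $c_j = 0$ for every $j$. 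This produces an uncountable independent family and closes the contradiction.

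The hard point is not the independence computation but recognising which invariant to compare. The tempting shortcut of comparing cardinalities, using $\abs{\Kxx} = \abs{\K}^{\aleph_0}$, fails for large fields: when $\abs{\K} = 2^{\aleph_0}$ one has $\abs{\K}^{\aleph_0} = \abs{\K}$, so a finitely generated subalgebra could have the very same cardinality as $\Kxx$. The argument must therefore be conducted at the level of $\K$-dimension, which is insensitive to $\abs{\K}$. For the same reason the classical lower bound via the geometric series $\sum_k \lambda^k x_1^k$ indexed by $\lambda \in \K$ is unavailable when $\K$ is finite; the almost disjoint family is the essential ingredient because it delivers a continuum of linearly independent power series uniformly in the cardinality and characteristic of $\K$.
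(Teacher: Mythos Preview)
Your argument is correct and takes a genuinely different route from the paper. The paper argues via commutative algebra: a field is Jacobson, so by the general Nullstellensatz any finitely generated $\K$-algebra is Jacobson; hence a finitely generated local $\K$-algebra has Krull dimension~$0$, and if it is moreover an integral domain it must be a field. Since $\Kxx$ is a local integral domain that is not a field, it cannot be finitely generated. Your approach instead bounds the $\K$-dimension of any finitely generated $\K$-algebra by $\aleph_0$ and then exhibits, via an almost disjoint family of subsets of $\N$, a continuum-sized $\K$-linearly independent subset of $\K[[x_1]] \subseteq \Kxx$. What your argument buys is self-containment: it uses nothing beyond elementary linear algebra and a standard combinatorial construction, and it is uniform in the cardinality and characteristic of $\K$, as you rightly stress. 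What the paper's argument buys is a reusable structural criterion---any local integral domain over $\K$ that is not a field fails to be finitely generated---which fits the ambient commutative-algebraic theme, though in the paper this generality is not invoked elsewhere (Proposition~\ref{prop:comp-loc-equi-noeth-ring-not-finitely-generated} uses Proposition~\ref{prop:Kxx-not-finitely-generated} as a black box via Artin--Tate).
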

\begin{proof}
  First of all, recall that a commutative ring $K$ is said to be
  \emph{Jacobson} when every prime ideal is the intersection of maximal ideals.
  If $\K$ is a field, then it is straightforwardly Jacobson. By a general form
  of Hilbert's \emph{Nullstellensatz}~\cite{Goldman_1951}, if $K$ is a Jacobson
  ring then any finitely generated $K$-algebra is also Jacobson. Let $\K$ be a
  field. Then any finitely generated $\K$-algebra $K$ which turns out to be a
  local ring is of Krull dimension $0$: indeed, let $\mathfrak{m}$ be the unique
  maximal ideal of $K$, then, since $K$ is Jacobson, the only prime ideal is
  $\mathfrak{m}$. Now, if furthermore $K$ is an integral domain, that is to say,
  $\{0\}$ is a prime ideal, then it is once again clear that the latter is the
  unique maximal ideal, and thus $K$ is a field. Hence, by contrapositive, let
  $K$ be a $\K$-algebra which is local as a ring as well as an integral domain,
  then, if $K$ is not a field, it cannot be finitely generated as a
  $\K$-algebra.  Finally, it is known that $\Kxx$ is a $\K$-algebra and is local
  as a ring as well as an integral domain but not a field.
\end{proof}

Now, let us recall the \emph{Cohen Structure Theorem}, originally proved by
\textsc{Cohen} in~\cite{Cohen_1946}. For our purposes on complete local
equicharacteristic Noetherian rings, we will reformulate the theorem
from~\cite[AC IX.30, Paragraphe 4, Num\'ero 3, Th\'eor\`eme 2]{Bourbaki_2006}:
\begin{theorem}\label{thm:cohen-structure}
    Let $(K, \mathfrak{m})$ be a complete local equicharacteristic Noetherian
    ring. Let us denote by $\kappa := K/\mathfrak{m}$ its residue field and
    by $d$ the Krull dimension of $K$.
    \begin{enumerate}
        \item\label{item:field-of-representatives}
            The ring $K$ contains a subring $\K$ which maps isomorphically onto
            $\kappa$ through the natural projection $K \longrightarrow
            K/\mathfrak{m} = \kappa$ and, therefore, is a field of same
            characteristic as both $\kappa$ and $K$. Thus, $K$ is a $\K$-algebra.
            Such a field $\K$ is a called \emph{field of representatives} for
            $(K, \mathfrak{m})$ (or, as originally called by \textsc{Cohen},
            \emph{coefficient field}).
        \item\label{item:number-of-variables}
            The quotient $\mathfrak{m}/\mathfrak{m}^2$ is a vector space over
            $\K$. Let $m$ be its $\K$-dimension.
        \item\label{item:Kxx-free-algebra}
            There exists an ideal $I$ of $\K[[\DOTS{x_1}{x_m}]]$ such that $K$
            and $\K[[\DOTS{x_1}{x_m}]]/I$ are isomorphic as $\K$-algebras.
        \item\label{item:sub-algebra}
            There exists a sub-$\K$-algebra $K_0$ of $K$ such that $K_0$ is
            isomorphic to $\K[[\DOTS{x_1}{x_d}]]$ and $K$ is a finitely
            generated $K_0$-module.
        \item\label{item:regular}
            If $(K, \mathfrak{m})$ is \emph{regular}, \ie, if $d =
            m$, then the $\K$-algebras $K$ and $\K[[\DOTS{x_1}{x_d}]]$ are
            isomorphic.
    \end{enumerate} 
\end{theorem}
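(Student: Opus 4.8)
The plan is to establish the five statements in the order given, treating Statement~\ref{item:field-of-representatives} as the genuine heart of the matter and deriving the other four from it together with standard dimension theory and the completeness of $K$. First I would apply Zorn's lemma to the poset of subfields of $K$ ordered by inclusion: for any subfield $F$ the intersection $F \cap \mathfrak{m}$ is an ideal of $F$ not containing $1$, hence is $0$, so $\pi|_F$ is automatically injective into $\kappa$, where $\pi \colon K \to \kappa$ is the projection; a union of a chain of subfields is again a subfield, so a maximal subfield $\K$ exists. The task is then to prove $\pi|_\K \colon \K \to \kappa$ is surjective. Arguing by contradiction, I pick $\bar\alpha \in \kappa$ outside the image and lift it to $\alpha \in K$. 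If $\bar\alpha$ is transcendental over $\pi(\K)$, every nonzero element of $\K[\alpha]$ maps to a nonzero element of $\kappa$, hence lies outside $\mathfrak{m}$ and is a unit of $K$, so the fraction field $\K(\alpha) \subseteq K$ strictly enlarges $\K$, contradicting maximality. If $\bar\alpha$ is algebraic and separable over $\pi(\K)$, I lift its minimal polynomial to a monic $f \in \K[T]$; since $\bar f'(\bar\alpha) \neq 0$ and $K$ is complete local, Hensel's lemma produces a root of $f$ in $K$ lifting $\bar\alpha$, again enlarging $\K$.

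The hard part, and the step I expect to be the main obstacle, is the remaining characteristic $p$ inseparable situation, where $\kappa$ need not be separably generated over $\pi(\K)$ and Hensel's lemma does not directly apply. Here I would invoke the theory of $p$-bases: choose a $p$-basis of $\kappa$, lift it to elements of $K$, and show by a successive-approximation argument—relying on the fact that $K$ is $\mathfrak{m}$-adically complete and separated (Krull's intersection theorem, underlying Proposition~\ref{prop:adic-topology-metric})—that these lifts generate a coefficient field mapping onto $\kappa$. This is precisely the delicate point on which Cohen's original paper and the Bourbaki treatment spend the bulk of their effort, so in a self-contained write-up this is where the real work lies.

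Granting Statement~\ref{item:field-of-representatives}, the remaining statements follow more mechanically. For Statement~\ref{item:number-of-variables}, the quotient $\mathfrak{m}/\mathfrak{m}^2$ is a $\kappa$-vector space, hence a $\K$-vector space via the isomorphism $\K \cong \kappa$, and it is finite-dimensional because $K$ is Noetherian so $\mathfrak{m}$ is finitely generated; I call its dimension $m$. For Statement~\ref{item:Kxx-free-algebra}, I choose $t_1, \dots, t_m \in \mathfrak{m}$ whose classes form a basis of $\mathfrak{m}/\mathfrak{m}^2$; by Nakayama they generate $\mathfrak{m}$. The assignment $x_i \mapsto t_i$ extends to a $\K$-algebra homomorphism $\phi \colon \K[[\DOTS{x_1}{x_m}]] \to K$, well-defined because $t_i \in \mathfrak{m}$ makes every formal series converge in the complete ring $K$. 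Surjectivity follows since $\K + \mathfrak{m} = K$ (the surjectivity of Statement~\ref{item:field-of-representatives}) lets one match any target element modulo $\mathfrak{m}^k$ by a polynomial in the $t_i$ with $\K$-coefficients, and completeness together with $\bigcap_k \mathfrak{m}^k = 0$ upgrades this to equality; the kernel of $\phi$ is the desired ideal $I$.

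Finally, for Statement~\ref{item:sub-algebra} I would take a system of parameters $u_1, \dots, u_d$ and map $\K[[\DOTS{x_1}{x_d}]] \to K$ by $x_i \mapsto u_i$; this map is injective by the analytic independence of a system of parameters, and $K$ is finite as a module over the image because $K/(\DOTS{u_1}{u_d})K$ has finite length, so lifts of a $\kappa$-basis of this quotient generate $K$ over the image by the complete form of Nakayama's lemma. Statement~\ref{item:regular} is then immediate: when $d = m$, the surjection $\phi$ of Statement~\ref{item:Kxx-free-algebra} goes from the regular local domain $\K[[\DOTS{x_1}{x_d}]]$ of dimension $d$ onto the regular local ring $K$ of dimension $d$; its kernel is a prime $\mathfrak{p}$ with $\dim \K[[\DOTS{x_1}{x_d}]]/\mathfrak{p} = \dim K = d$, forcing $\operatorname{ht}\mathfrak{p} = 0$ and hence $\mathfrak{p} = 0$ since the source is a domain, which yields the claimed isomorphism $K \cong \K[[\DOTS{x_1}{x_d}]]$.
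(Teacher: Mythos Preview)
The paper does not prove Theorem~\ref{thm:cohen-structure} at all: it is stated as a recalled result, with the proof delegated entirely to the references Cohen~\cite{Cohen_1946} and Bourbaki~\cite{Bourbaki_2006}. There is therefore no ``paper's own proof'' to compare against.

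That said, your sketch is essentially the standard argument one finds in those references. The Zorn's-lemma reduction to a maximal subfield, followed by the transcendental/separable-algebraic dichotomy handled by Hensel's lemma, and finally the $p$-basis technique for the inseparable characteristic-$p$ case, is precisely Cohen's original strategy as polished by later authors. You are right that the inseparable step is where the real content lies; your outline of it (lift a $p$-basis, successive approximation in the $\mathfrak{m}$-adic topology) is the correct shape, though in a full write-up one needs to verify carefully that the lifted $p$-basis elements are themselves $p$-independent in $K$ and that the resulting subring is actually a field. The derivations of Statements~\ref{item:number-of-variables}--\ref{item:regular} from the coefficient-field statement are all standard and correctly argued; in particular, your use of analytic independence of a system of parameters for the injectivity in Statement~\ref{item:sub-algebra}, and the dimension count forcing $\ker\phi = 0$ in Statement~\ref{item:regular} (valid because a regular local ring is a domain), are the expected moves.
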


Let us now introduce standard bases of formal power series. In order
to accomplish that, we need to define the notions of leading monomials and
leading coefficients.
\begin{definition}\label{def:leading}
    Let $<$ be an admissible monomial order on $[\x]$ and $f \in \Kxx \setminus
    \{0\}$. Define the \emph{leading monomial} of $f$ for $<$ as follows:
    \[
        \LM{<}{f} := \min_{<} \supp{f} \in [\x].
    \]
    We then define the \emph{leading coefficient} of $f$ for $<$ as $\LC{<}{f}
    := \coeff{f}{\LM{<}{f}} \in \K$.
\end{definition}

\begin{definition}[Standard basis]\label{def:standard-basis}
    Let $I$ be an ideal in $\Kxx$ and $<$ be an admissible monomial order. A
    \emph{standard basis of $I$ for $<$} is any set $G \subseteq I \setminus
    \{0\}$ such that, for all $f \in I \setminus \{0\}$,
    there exist $g \in G$ and $m \in [\x]$ that satisfy:
    \[
        \LM{\op{<}}{f} = m \cdot \LM{\op{<}}{g}.
    \]
\end{definition}
In the commutative setting that we study here, there always exists a
\emph{finite} standard basis for any ideal of $\Kxx$ and any admissible monomial
order.

It is known since \textsc{Hironaka}~\cite{Hironaka_1964, Hironaka_1964a} that, for a
fixed ideal $I$ in $\Kxx$ and a fixed monomial order $<$, for any $f
\in \Kxx$, there exists a unique formal power series, let us denote it here by
$r_f \in \Kxx$, such that $f \equiv r_f \mod I$ and no monomial in $\supp{r_f}$
is a multiple of a leading monomial for $<$ of a formal power series in
$I$. That formal power series $r_f$ is called the \emph{Hironaka remainder} of
$f$ for $<$ modulo $I$. It will serves as a canonical representative of the
equivalence class of $f$ modulo $I$.  The purpose of standard bases was
primarily to compute effectively that Hironaka remainder. In
Subsection~\ref{ssct:cong-ideal}, we will show that Hironaka
remainders are actually just the normal forms of a topological rewriting system
which satisfies a certain notion of confluence.

Here is a useful proposition which relates the degree of the leading
monomial of the difference of two formal power series for a certain monomial
order with the distance between the two formal power series.
\begin{proposition}\label{prop:deg-lm-val}
    Let $<$ be an admissible monomial order on $[\x]$ which is compatible with
    the degree. Then, for all $f, g \in \Kxx$ such that $f \neq g$, we have:
    \[
        \delta(f, g) = \dfrac{1}{2^{\deg\left(\LM{\op{<}}{f - g}\right)}}.
    \]
\end{proposition}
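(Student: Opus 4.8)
The plan is to collapse both sides of the asserted equality onto the single quantity $\val{f-g}$ and then read off the degree of the leading monomial from compatibility with the degree. Set $h := f - g$; since $f \neq g$ we have $h \neq 0$, so $\supp{h}$ is a nonempty subset of $[\x]$ and $\val{h} = \min\ens{\deg m}{m \in \supp{h}}$ is a well-defined element of $\N$. The reformulation~\eqref{eq:delta} of the metric already gives $\delta(f,g) = 2^{-\val{h}}$ directly, so the whole statement reduces to the single combinatorial identity $\deg\left(\LM{\op{<}}{h}\right) = \val{h}$.

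For this I argue by bounding $\deg(m_0)$, where $m_0 := \LM{\op{<}}{h}$. On one hand $m_0 \in \supp{h}$, so by definition of the valuation $\deg(m_0) \geq \val{h}$. On the other hand, suppose some $m \in \supp{h}$ satisfied $\deg(m) < \deg(m_0)$; compatibility with the degree would then give $m < m_0$, that is $m_0 \op{<} m$, contradicting that the leading monomial $m_0$, as the initial term of $h$, is $\op{<}$-maximal among the elements of $\supp{h}$. Hence $\deg(m_0) \leq \deg(m)$ for every $m \in \supp{h}$, so $\deg(m_0) = \val{h}$, and combined with the first bound this is the desired identity.

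The crux is precisely this last contradiction, and it is the only place where the hypothesis that $<$ is compatible with the degree does any work: compatibility is exactly what converts the order-extremality defining $\LM{\op{<}}{h}$ into minimality of degree, and without it the leading monomial could a priori sit at any degree $\geq \val{h}$. The one accompanying point that deserves a line of care is that $m_0$ is well-defined even when $\supp{h}$ is infinite; this is harmless, since only finitely many monomials share the minimal degree $\val{h}$, so a $\op{<}$-maximal element of $\supp{h}$ exists and the unboundedness of the higher-degree monomials never intervenes. With these two observations in place the two bounds combine to give $\delta(f,g) = 2^{-\val{h}} = 2^{-\deg\left(\LM{\op{<}}{f-g}\right)}$, as claimed.
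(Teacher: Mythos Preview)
Your proof is correct and follows essentially the same route as the paper: reduce to showing $\deg\!\left(\LM{\op{<}}{h}\right)=\val{h}$ via the identity $\delta(f,g)=2^{-\val{h}}$, then use compatibility with the degree to identify the leading monomial as a monomial of minimal degree in $\supp{h}$. The only cosmetic difference is in the well-definedness of $m_0$: you argue via finiteness of the set of minimal-degree monomials together with compatibility, whereas the paper simply invokes admissibility of $<$ (so that $\min_< \supp{h}$ exists outright); the latter is slightly cleaner since it does not need the degree hypothesis for existence, but both are valid.
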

\begin{proof}
    Indeed, let $h \in \Kxx \setminus \{ 0 \}$, since the monomial order $<$ is
    admissible, the leading monomial $\LM{\op{<}}{h}$ is exactly the least (for
    $<$) monomial in the support $\supp{h}$. Now, since the order $<$ is
    compatible with the degree, \ie{} the degree function on monomials is
    non-decreasing, it follows that $\deg\left(\LM{\op{<}}{h}\right)$ is minimal
    among all the $\deg(m)$ with $m \in \supp{h}$. This exactly means that
    $\deg\left(\LM{\op{<}}{h}\right) = \val{h}$, hence the result.
\end{proof}

To end this subsection, we will prove
Proposition~\ref{prop:comp-loc-equi-noeth-ring-not-finitely-generated}. For that
purpose, we will require the following lemma:
\begin{lemma}[Artin-Tate~\cite{Artin_1951}]\label{lem:Artin-Tate}
    Let $K$ be a commutative Noetherian ring and let $A_0 \subseteq A$ be two
    commutative $K$-algebras. If $A$ is finitely generated as a $K$-algebra
    and if $A$ is a finitely generated $A_0$-module, then $A_0$ is finitely
    generated as a $K$-algebra.
\end{lemma}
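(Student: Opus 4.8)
The plan is to interpose, between $K$ and $A_0$, an auxiliary finitely generated $K$-subalgebra $A_0' \subseteq A_0$ that is Noetherian and over which $A$ becomes module-finite; the conclusion then drops out because $A_0$ lies inside $A$ as a submodule of a Noetherian module. First I would fix generators: let $\DOTS{x_1}{x_n}$ generate $A$ as a $K$-algebra and let $\DOTS{y_1}{y_m}$ generate $A$ as an $A_0$-module, where I may assume $1$ is among the $y_j$ (adjoining a module generator preserves finite generation). Since each $x_i$ lies in $A$, I can write $x_i = \sum_j a_{ij} y_j$ with $a_{ij} \in A_0$; likewise each product $y_i y_j$ lies in $A$, so $y_i y_j = \sum_k b_{ijk} y_k$ for suitable $b_{ijk} \in A_0$. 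I then let $A_0'$ be the $K$-subalgebra of $A_0$ generated by the finite family of all the $a_{ij}$ and $b_{ijk}$. By construction $A_0'$ is a finitely generated $K$-algebra, hence Noetherian by Hilbert's Basis Theorem, using the hypothesis that $K$ is Noetherian.

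The crucial step is to show that $A$ is already finitely generated as an $A_0'$-module, in fact by $\DOTS{y_1}{y_m}$. Every element of $A$ is a $K$-polynomial in the $x_i$; substituting $x_i = \sum_j a_{ij} y_j$ rewrites it as an $A_0'$-polynomial in the $y_j$, since the $a_{ij}$ lie in $A_0'$ and constant terms land in $K \cdot 1 \subseteq A_0' y_1$. Then, applying the relations $y_i y_j = \sum_k b_{ijk} y_k$ with $b_{ijk} \in A_0'$ repeatedly (an induction on the degree of the monomial in the $y_j$), I can reduce every such monomial to an $A_0'$-linear combination of $\DOTS{y_1}{y_m}$. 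This gives $A = \sum_j A_0' y_j$, so $A$ is a finitely generated $A_0'$-module.

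Finally, since $A_0'$ is Noetherian and $A$ is module-finite over it, $A$ is a Noetherian $A_0'$-module, so each of its $A_0'$-submodules is finitely generated. As $A_0' \subseteq A_0 \subseteq A$, the algebra $A_0$ is an $A_0'$-submodule of $A$ and is therefore a finitely generated $A_0'$-module. Combining this with the fact that $A_0'$ is a finitely generated $K$-algebra, I conclude that $A_0$ is a finitely generated $K$-algebra, being module-finite over a finitely generated $K$-algebra. The main obstacle is precisely this middle step: it is tempting to try to control $A_0$ directly, but the right move is to transfer Noetherianity to $A$ via the auxiliary algebra $A_0'$, and for that one must be careful to collect \emph{both} the structure constants $a_{ij}$ expressing the algebra generators in terms of the module generators \emph{and} the multiplication constants $b_{ijk}$, so that the reduction of arbitrary monomials in the $y_j$ keeps all coefficients inside $A_0'$.
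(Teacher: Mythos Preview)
Your argument is correct and is precisely the classical Artin--Tate argument: introduce the auxiliary finitely generated $K$-subalgebra $A_0'$ generated by the structure constants $a_{ij}$ and $b_{ijk}$, show $A$ is module-finite over it, and then use Noetherianity to conclude that $A_0$ is module-finite over $A_0'$, hence algebra-finite over $K$. The paper itself does not supply a proof of this lemma; it is stated with a reference to~\cite{Artin_1951} and used as a black box in the proof of Proposition~\ref{prop:comp-loc-equi-noeth-ring-not-finitely-generated}, so there is nothing to compare against beyond noting that your write-up reproduces the original proof faithfully.
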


Now, using previous results we prove the following proposition.
\begin{proposition}\label{prop:comp-loc-equi-noeth-ring-not-finitely-generated}
    Let $(K, \mathfrak{m})$ be a complete local equicharacteristic Noetherian
    ring which has a positive Krull dimension. Let $\kappa :=
    K/\mathfrak{m}$ be its residue field. Then, $K$ is not finitely generated as a
    $\kappa$-algebra, where the action of $\kappa$ is defined via
    Statement~\ref{item:field-of-representatives} of
    Theorem~\ref{thm:cohen-structure}.
\end{proposition}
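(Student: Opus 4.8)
The plan is to argue by contradiction, combining the Cohen structure theorem with the Artin--Tate lemma. Suppose, for contradiction, that $K$ were finitely generated as a $\kappa$-algebra. Identifying $\kappa$ with the field of representatives $\K$ furnished by Statement~\ref{item:field-of-representatives} of Theorem~\ref{thm:cohen-structure} (through which the $\kappa$-action on $K$ is defined), this supposition says precisely that $K$ is finitely generated as a $\K$-algebra.

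Next I would invoke Statement~\ref{item:sub-algebra} of the same theorem, which supplies a sub-$\K$-algebra $K_0 \subseteq K$ with $K_0 \cong \K[[\DOTS{x_1}{x_d}]]$ as $\K$-algebras, where $d$ is the Krull dimension of $K$, together with the fact that $K$ is a finitely generated $K_0$-module. Since $\K$ is a field, it is in particular Noetherian, so Lemma~\ref{lem:Artin-Tate} applies with base ring $\K$, outer algebra $A := K$ and inner algebra $A_0 := K_0$: the two required hypotheses, namely that $K$ is a finitely generated $\K$-algebra (our assumption) and that $K$ is a finitely generated $K_0$-module (Statement~\ref{item:sub-algebra}), are exactly in force. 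The lemma then yields that $K_0$ is finitely generated as a $\K$-algebra.

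Finally, because the Krull dimension satisfies $d \geq 1$ by hypothesis, $K_0 \cong \K[[\DOTS{x_1}{x_d}]]$ is a formal power series algebra in at least one indeterminate, and hence is \emph{not} finitely generated as a $\K$-algebra by Proposition~\ref{prop:Kxx-not-finitely-generated}. This contradicts the conclusion of the preceding paragraph. The contradiction forces our initial supposition to fail, so $K$ is not finitely generated as a $\kappa$-algebra, as claimed.

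I do not anticipate any substantial obstacle, since every ingredient is already available. The only points deserving slight care are verifying that $\K$ is a legitimate Noetherian base ring for the Artin--Tate lemma (immediate, as a field is Noetherian) and confirming that the $\K$-algebra structures on $K$ and on $K_0$ coming from Statements~\ref{item:field-of-representatives} and~\ref{item:sub-algebra} are mutually compatible, so that $K_0 \subseteq K$ is genuinely an inclusion of $\K$-algebras as the lemma requires.
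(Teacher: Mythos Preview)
Your proposal is correct and follows essentially the same approach as the paper: both arguments use Statement~\ref{item:sub-algebra} of the Cohen structure theorem to obtain $K_0 \cong \K[[x_1,\dots,x_d]]$ with $K$ module-finite over $K_0$, then combine the Artin--Tate lemma (Lemma~\ref{lem:Artin-Tate}) with Proposition~\ref{prop:Kxx-not-finitely-generated}. The only cosmetic difference is that the paper phrases the final step as the contrapositive of Artin--Tate, whereas you set it up as a proof by contradiction; you are also slightly more explicit about where the hypothesis $d \geq 1$ enters.
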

\begin{proof}
    By Theorem~\ref{thm:cohen-structure} statement
    number~\ref{item:sub-algebra}, there exists a sub-$\kappa$-algebra $K_0$ of
    $K$ isomorphic to a $\kappa$-algebra of formal power series in
    finitely many variables in such a way that $K$ is actually a finitely
    generated $K_0$-module.  However, by
    Proposition~\ref{prop:Kxx-not-finitely-generated}, $K_0$ cannot be finitely
    generated as a $\kappa$-algebra. Hence, by contrapositive of
    Lemma~\ref{lem:Artin-Tate}, we conclude that $K$ cannot be finitely
    generated as a $\kappa$-algebra.
\end{proof}

A consequence of
Proposition~\ref{prop:comp-loc-equi-noeth-ring-not-finitely-generated} is that,
if we were to use \Grobner{} bases methods to compute in a complete local
equicharacteristic Noetherian ring, we would have to work on a polynomial ring
with infinitely many variables, for which many essential results of \Grobner{}
bases in finitely many variables fail to translate, especially when it comes to
effectiveness. This is why standard bases are a useful alternative tool.

It follows from Proposition~\ref{prop:local-ring-is-zariski} and the fact that
$I(\x)$ is the unique maximal ideal of the local ring $\Kxx$ with which we
defined the topology that any ideal in $\Kxx$ is topologically closed for the
topology induced by the metric $\delta$.

\section{Topological rewriting}\label{sct:top-rew-metric-spaces}

\subsection{Basic notions and results}\label{ssct:topological-rewriting}

In this subsection, we introduce the basics of \emph{topological rewriting
theory} as originally presented in~\cite{Chenavier_2020}. It is a generalisation
of classical rewriting theory in which we study the reduction relation with
respect to a topology on the base set.

\begin{definition}[Topological rewriting
    system]\label{def:topological-rewriting-system}
    An \emph{(abstract) topological rewriting system} is the data of $(X, \tau,
    \rew)$ where $(X, \tau)$ is a topological space and $\rew$ is a binary
    relation on $X$ called the \emph{base rewriting} or \emph{one-step
    reduction} relation.
\end{definition}

As for the classical setting, we study the system not only through the base
relation but also via super-relations of the base relation. For instance, in the
classical context, we take interest in the transitive reflexive closure of the
base relation, which we denote by $\transReflRew$. This relation describes a
finite sequence of elementary computations. In topological rewriting
systems, we investigate a more general kind of relation, namely the
\emph{topological rewriting relation} as defined thereafter.

\begin{definition}\label{def:topological-rewriting-relation}
    Let $(X, \tau, \rew)$ be a topological rewriting system. The
    \emph{topological rewriting relation} of that system, denoted $\topRew$, is
    defined as the topological closure of the relation $\transReflRew$ viewed as
    a subspace of $X \times X$ equipped with the product topology $\dTop{X}
    \times \tau$, where $\dTop{X}$ is the discrete topology on $X$.
\end{definition}

Before noticing that this is indeed a generalisation of classical abstract
rewriting systems, we will characterise in the following proposition that two
elements $a, b \in X$ are related through that topological rewriting relation as
$a \topRew b$ if, and only if, $a$ rewrites in finitely many steps arbitrarily close
to $b$ (in the sense of the topology $\tau$), \ie{} there are finite sequences
of elementary computations starting at $a$ that come arbitrarily close to $b$.

\begin{proposition}\label{prop:top-rew-charac}
    Let $(X, \tau, \rew)$ be a topological rewriting system. Then, for all $a,
    b \in X$, we have $a \topRew b$ if, and only if, for all neighbourhoods $U$
    of $b$ in $(X, \tau)$, there exists $c \in U$ such that $a \transReflRew c$.
\end{proposition}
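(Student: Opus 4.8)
The plan is to unwind the definition of $\topRew$ as a topological closure and to exploit the fact that the first factor of the product carries the discrete topology $\dTop{X}$. Writing $S := \ens{(x, y) \in X \times X}{x \transReflRew y}$, recall that by Definition~\ref{def:topological-rewriting-relation} we have $a \topRew b$ precisely when $(a, b)$ lies in the closure $\overline{S}$ of $S$ with respect to the product topology $\dTop{X} \times \tau$ on $X \times X$. I would start from the standard point-set characterisation of closure: a pair $(a, b)$ belongs to $\overline{S}$ if and only if every open neighbourhood of $(a, b)$ in $(X \times X, \dTop{X} \times \tau)$ meets $S$. Since membership in a closure can be tested against any neighbourhood basis rather than against all open sets, the next step is to exhibit a convenient such basis at $(a, b)$.

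The crux is the following observation. Because $\dTop{X}$ is discrete, the singleton $\{a\}$ is open and is the smallest open set containing $a$. Consequently, the family of products $\{a\} \times U$, where $U$ ranges over the open neighbourhoods of $b$ in $(X, \tau)$, forms a neighbourhood basis of $(a, b)$ in $\dTop{X} \times \tau$: any basic open set of the product topology containing $(a, b)$ has the form $V \times W$ with $a \in V$, $b \in W$ and $W \in \tau$, and it then contains $\{a\} \times W$, which itself contains $(a, b)$. Thus nothing smaller than $U$ is needed in the first coordinate.

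With this basis in hand the equivalence follows at once. The pair $(a, b)$ lies in $\overline{S}$ if and only if, for every open neighbourhood $U$ of $b$, the intersection $(\{a\} \times U) \cap S$ is non-empty. Unfolding the definition of $S$, this intersection contains a point exactly when there exists $c \in U$ with $a \transReflRew c$. This establishes the stated equivalence for open neighbourhoods; since every neighbourhood of $b$ contains an open one and every open neighbourhood is a neighbourhood, the condition for arbitrary neighbourhoods $U$ of $b$ is equivalent, yielding the proposition as stated.

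I do not anticipate any serious obstacle, as the argument is purely point-set topological. The only step warranting care is the justification that $\{a\} \times U$ is genuinely a neighbourhood basis at $(a, b)$: this is exactly where the discreteness of the first factor is indispensable, since it \emph{pins} the first coordinate to $a$ and prevents the source of the reduction from drifting, so that passing to the closure can only add limit points in the second, topological, coordinate. Everything else is a routine translation between the language of closures and the language of reductions.
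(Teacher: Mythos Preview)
Your proof is correct and follows essentially the same approach as the paper: both arguments exploit that $\{a\}$ is open in the discrete factor so that sets of the form $\{a\} \times U$ serve as a neighbourhood basis at $(a,b)$, reducing membership in the closure to the stated condition on neighbourhoods of $b$. Your presentation via an explicit neighbourhood basis is slightly more packaged than the paper's direct two-direction verification, but the content is identical.
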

\begin{proof}
    Let $\mathfrak{X}^2$ be the topological space $X \times X$ endowed with the
    product topology $\dTop{X} \times \tau$.

    Suppose $(a, b)$ is in the closure of $\transReflRew$ when viewed as a
    subset of $\mathfrak{X}^2$. Let $U$ be a neighbourhood of $b$ in $(X,
    \tau)$. Then $\{a\} \times U$ is a neighbourhood of $(a, b)$ in
    $\mathfrak{X}^2$ and therefore, there exists $(a, c) \in \{a\} \times U$
    such that $a \transReflRew c$.

    Conversely, assume that for all neighbourhoods $U$ of $b$ in $(X, \tau)$ we
    have $c \in U$ such that it satisfies $a \transReflRew c$. But, by
    definition of the product space, for any neighbourhood $V$ of $(a, b)$ in
    $\mathfrak{X}^2$, there exist a neighbourhood $W$ of $a$ in $\left(X,
    \dTop{X}\right)$ and a neighbourhood $U$ of $b$ in $(X, \tau)$ such that
    they satisfy the relation $W \times U \subseteq V$. Now, by assumption,
    there exists $c \in U$ such that $a \transReflRew c$ which means exactly
    that $(a, b)$ is in the topological closure of $\transReflRew$ when viewed
    as a subset of $\mathfrak{X}^2$, since $a \in W$.
\end{proof}

\begin{remark}\label{rmk:generalisation-of-ars}
    Note how $\transReflRew$ is always a subrelation of $\topRew$. Now,
    conversely, notice that, when we consider $\tau := \dTop{X}$, it follows
    from Proposition~\ref{prop:top-rew-charac} that the relation $\topRew$
    associated to the topological rewriting system $\left(X, \dTop{X},
    \rew\right)$ is exactly equal to $\transReflRew$ as binary relations. In
    that sense, the topological rewriting systems are generalisations of
    abstract rewriting systems because most of the subsequent definitions in
    topological rewriting theory turn out to translate back to well-known
    properties of classical rewriting theory when considering the discrete
    topology.
\end{remark}

The notion of normal forms remains the same as in the classical setting.

\begin{definition}[Normal form]\label{def:normal-form}
    Let $(X, \tau, \rew)$ be a topological rewriting system. Any $a \in X$ is
    called a \emph{normal form} of the system if $\ens{b \in X}{a \rew b} =
    \emptyset$.
\end{definition}

It is possible to characterise normal forms in terms of the topological
rewriting relation when we allow ourselves to assume a property on the
underlying topological space. Recall that a $T_1$-space is a topological space
$(X, \tau)$ such that for all $x \in X$, the intersection of all neighbourhoods
of $x$ in $(X, \tau)$ is exactly equal to the singleton $\{x\}$.

\begin{lemma}\label{lem:charac-normal-forms}
    Let $(X, \tau, \rew)$ be a topological rewriting system.
    \begin{enumerate}
        \item\label{item:normal-form-T_1-space}
            Assume that $(X, \tau)$ is a $T_1$-space. Then, for any normal
            form $a$ of the system and for all $b \in X$, if $a \topRew b$ then
            $b = a$.
        \item\label{item:normal-form-anti-reflexive}
            Assume that $\rew$ is anti-reflexive and let $a \in X$. If, for
            all $b \in X$ such that $a \topRew b$ we have $b = a$, then $a$ is a
            normal form for the system.
    \end{enumerate}
\end{lemma}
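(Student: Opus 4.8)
The plan is to prove the two statements separately, since each is a straightforward unwinding of the definitions combined with the characterisation of $\topRew$ given in Proposition~\ref{prop:top-rew-charac}.

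For Statement~\ref{item:normal-form-T_1-space}, the strategy is as follows. Let $a$ be a normal form and suppose $a \topRew b$. Since $a$ is a normal form, there is no $c$ with $a \rew c$, so the only element reachable by $\transReflRew$ from $a$ is $a$ itself. First I would use Proposition~\ref{prop:top-rew-charac}: from $a \topRew b$ we know that for every neighbourhood $U$ of $b$ there is some $c \in U$ with $a \transReflRew c$. But the only such $c$ is $a$, so $a \in U$ for every neighbourhood $U$ of $b$. This means $a$ lies in the intersection of all neighbourhoods of $b$. Invoking the $T_1$-hypothesis, that intersection is exactly $\{b\}$, whence $a = b$, as desired.

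For Statement~\ref{item:normal-form-anti-reflexive}, I would argue by contraposition. Assume $a$ is not a normal form, so there exists $c \in X$ with $a \rew c$. Since $\rew$ is anti-reflexive, we have $c \neq a$. Now $a \rew c$ gives $a \transReflRew c$, and because $\transReflRew$ is always a subrelation of $\topRew$ (as noted in Remark~\ref{rmk:generalisation-of-ars}), we obtain $a \topRew c$. This exhibits an element $c$ with $a \topRew c$ yet $c \neq a$, contradicting the hypothesis that $a \topRew b$ implies $b = a$. Hence $a$ must be a normal form.

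I do not anticipate a genuine obstacle here; both parts are short logical deductions. The only subtlety worth stating carefully is the role of each hypothesis: in the first part the $T_1$-condition is precisely what collapses ``$a$ belongs to every neighbourhood of $b$'' into ``$a = b$'', and in the second part anti-reflexivity of $\rew$ is exactly what guarantees the witness $c$ is distinct from $a$. The main thing to get right is applying Proposition~\ref{prop:top-rew-charac} in the correct direction in each case and being explicit that the set of elements $\transReflRew$-reachable from a normal form is the singleton $\{a\}$.
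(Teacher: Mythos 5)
Your proposal is correct and follows essentially the same route as the paper: part one applies Proposition~\ref{prop:top-rew-charac} to show $a$ lies in every neighbourhood of $b$ and then invokes the $T_1$-condition, and part two is the paper's contradiction argument rephrased as a contraposition (the paper derives $a \rew a$ from anti-reflexivity, you instead use anti-reflexivity to get $c \neq a$; these are logically the same step). No gaps.
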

\begin{proof}
    For statement number~\ref{item:normal-form-T_1-space}, assume $(X, \tau)$ is
    a $T_1$-space and let $a$ be a normal form and consider $b \in X$ such that $a
    \topRew b$. Then, by Proposition~\ref{prop:top-rew-charac}, for all
    neighbourhoods $U$ of $b$ for the topology $\tau$ there exists $c \in U$
    such that $a \transReflRew c$. Now, since $a$ is a normal form, $c = a$.
    In other words, $a$ is contained in all neighbourhoods of $b$. Finally,
    since $(X, \tau)$ is a $T_1$-space by assumption, it follows that $b = a$.

    For statement number~\ref{item:normal-form-anti-reflexive}, assume $\rew$
    anti-reflexive and let $a \in X$ be an element such that $\ens{b \in X}{a
    \topRew b} = \{a\}$. By contradiction, assume there exists $b \in X$ such
    that $a \rew b$. Since $\rew$ is a subrelation of $\topRew$, we have $a
    \topRew b$. By assumption, it follows that $b = a$, hence $a \rew a$, which
    contradicts the anti-reflexivity.
\end{proof}

One of the goals of topological rewriting theory is to study systems where
classical confluence fails to be strictly verified, as well as systems which are
not \emph{terminating} (\textit{a.k.a.} \emph{strongly normalising}). However,
it is still useful to consider systems in which a normal form can be reached
through a super-relation of $\rew$, starting at any element of the base space.
This is the purpose of the following definition.

\begin{definition}\label{def:normalising}
    Let $(X, \tau, \rew)$ be a topological rewriting system and let $\squigRew$
    be a subrelation of $\topRew$. Denote by $\squigTransReflRew$ the transitive
    reflexive closure of $\squigRew$. We say that the system is
    \emph{normalising with respect to $\squigRew$} if, for all $a \in X$ there
    exists a normal form $b$ of the system such that $a \squigTransReflRew b$,
    in which case we say ``$b$ is a normal form of $a$ reached by $\squigRew$''.
\end{definition}

Rewriting theory is especially concerned with the existence and uniqueness of
normal forms of arbitrary elements of the base space as reached by a certain
super-relation of the base rewriting relation. Definition~\ref{def:normalising}
gives the existence and we now give three definitions related to the uniqueness
of normal forms.

\begin{definition}\label{def:nf-props}
    Let $(X, \tau, \rew)$ be a topological rewriting system and let $\squigRew$
    be a subrelation of $\topRew$. Let $\squigTransReflRew$ be the transitive
    reflexive closure of $\squigRew$ and $\squigEquivRew$ the equivalence
    relation generated by $\squigRew$. We say that the system:
    \begin{enumerate}
        \item\label{item:nf-prop}
            has the \emph{normal form property with respect to $\squigRew$}
            if, for all $a \in X$ and any normal form $b$ of the system such
            that $a \squigEquivRew b$, then $a \squigTransReflRew b$.
        \item\label{item:un-prop}
            has the \emph{unique normal form property with respect to
            $\squigRew$} if, for all normal forms $a$ and $b$ of the system such
            that $a \squigEquivRew b$, then $a = b$.
        \item\label{item:unique-nf}
            has \emph{unique normal forms reached by $\squigRew$} if,
            for all $a \in X$ and any normal forms $b$ and $c$ of the system
            such that $a \squigTransReflRew b$ and $a \squigTransReflRew c$,
            then $b = c$.
    \end{enumerate}
\end{definition}

It turns out that, under a specific normalisation assumption and over a
$T_1$-space, these three properties are equivalent. This is the content of the
following proposition.

\begin{proposition}\label{prop:equivalence-of-nf-properties}
    Let $(X, \tau, \rew)$ be a topological rewriting system and let $\squigRew$
    be a subrelation of $\topRew$. Consider all the definitions with respect to
    $\squigRew$. We then have:
    \begin{enumerate}
        \item\label{item:un-unique}
            The unique normal form property implies that the system has unique
            normal forms reached by $\squigRew$.
        \item\label{item:nf-un}
            If $(X, \tau)$ is a $T_1$-space and if $\topRew$ is transitive, then
            the normal form property implies the unique normal form property.
        \item\label{item:un-nf}
            If the system is normalising, then the unique normal form property
            implies the normal form property.
        \item\label{item:unique-un}
            If $(X, \tau)$ is a $T_1$-space and if the system is normalising,
            then, when the system has unique normal forms reached by
            $\squigRew$, it has the unique normal form property.
    \end{enumerate}
\end{proposition}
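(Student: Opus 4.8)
The plan is to handle the four implications separately, since they draw on different combinations of the hypotheses, while isolating one recurring observation: because $\squigRew$ is a subrelation of $\topRew$ and, by Remark~\ref{rmk:generalisation-of-ars}, $\transReflRew$ (which is reflexive) is contained in $\topRew$, the relation $\topRew$ is always reflexive; hence $\squigTransReflRew \subseteq \topRew$ as soon as $\topRew$ is transitive, whereas $\squigTransReflRew \subseteq \squigEquivRew$ holds unconditionally. The two purely order-theoretic implications need no topology. For \ref{item:un-unique}, given $a$ and normal forms $b, c$ with $a \squigTransReflRew b$ and $a \squigTransReflRew c$, I would pass to $\squigEquivRew$ to obtain $b \squigEquivRew c$ and then invoke the unique normal form property directly. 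For \ref{item:un-nf}, given $a$ and a normal form $b$ with $a \squigEquivRew b$, I would use normalisation to produce a normal form $a'$ with $a \squigTransReflRew a'$, note $a' \squigEquivRew b$, conclude $a' = b$ by the unique normal form property, and thus $a \squigTransReflRew a' = b$.

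For \ref{item:nf-un}, I would take two normal forms $a, b$ with $a \squigEquivRew b$. Applying the normal form property (taking $b$ as the relevant normal form) gives $a \squigTransReflRew b$; since $\topRew$ is assumed transitive and is reflexive, $\squigTransReflRew \subseteq \topRew$, so $a \topRew b$. As $a$ is a normal form and $(X, \tau)$ is a $T_1$-space, Lemma~\ref{lem:charac-normal-forms}\ref{item:normal-form-T_1-space} forces $b = a$, which is precisely the unique normal form property.

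The substantive case, and the main obstacle, is \ref{item:unique-un}. Here the hypotheses (normalising, unique normal forms reached, $T_1$) let me define a genuine normal-form map: normalisation gives, for each $c \in X$, at least one normal form reached by $\squigRew$, and the property of unique normal forms reached makes it unique, so I write $\mathrm{nf}(c)$ for it. The key lemma to establish is invariance of $\mathrm{nf}$ under a single elementary step: if $c \squigRew d$ then $c \squigTransReflRew \mathrm{nf}(d)$, whence $\mathrm{nf}(c) = \mathrm{nf}(d)$ by uniqueness; consequently $\mathrm{nf}$ is constant along any zigzag of $\squigRew$-steps and their inverses, so $a \squigEquivRew b$ implies $\mathrm{nf}(a) = \mathrm{nf}(b)$. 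It then remains to identify $\mathrm{nf}$ of a normal form with itself: if $a$ is a normal form, then since $\squigRew \subseteq \topRew$ and $(X,\tau)$ is $T_1$, Lemma~\ref{lem:charac-normal-forms}\ref{item:normal-form-T_1-space} shows that every one-step $\squigRew$-successor of $a$ equals $a$, and an induction along a finite $\squigTransReflRew$-chain gives $\mathrm{nf}(a) = a$. For normal forms $a, b$ with $a \squigEquivRew b$ this yields $a = \mathrm{nf}(a) = \mathrm{nf}(b) = b$. The delicate point to get right is the invariance lemma, combined with the careful use of the $T_1$ hypothesis to pin $\mathrm{nf}(a)$ down to $a$ here, where transitivity of $\topRew$ is not among the assumptions.
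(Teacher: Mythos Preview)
Your proof is correct. For items~\ref{item:un-unique}, \ref{item:nf-un}, and~\ref{item:un-nf} your arguments coincide with the paper's (which leaves~\ref{item:un-unique} and~\ref{item:nf-un} as ``straightforward'' and gives essentially your argument for~\ref{item:un-nf}).

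For item~\ref{item:unique-un} you take a genuinely different route. The paper argues combinatorially: it writes $a \squigEquivRew b$ as a zigzag $a \squigSymRew c_1 \squigSymRew \cdots \squigSymRew c_\ell \squigSymRew b$, uses the $T_1$ hypothesis via Lemma~\ref{lem:charac-normal-forms}\ref{item:normal-form-T_1-space} to force the extremal edges to point inward, and then iteratively eliminates ``valleys'' $c_{i-1} \squigRew c_i \squigInvRew c_{i+1}$ by normalising $c_i$ and invoking the uniqueness hypothesis, until a single peak remains. You instead package normalisation and uniqueness into a well-defined function $\mathrm{nf}$ and show it is $\squigEquivRew$-invariant via a one-step invariance lemma; this is cleaner and avoids the bookkeeping of valley indices. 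A small bonus: your detour through $T_1$ to establish $\mathrm{nf}(a) = a$ for a normal form $a$ is unnecessary, since $\squigTransReflRew$ is reflexive and $a$ is itself a normal form, so $a$ is already a normal form of $a$ reached by $\squigRew$ and uniqueness gives $\mathrm{nf}(a) = a$ directly. Thus your argument in fact establishes item~\ref{item:unique-un} without the $T_1$ assumption, slightly strengthening the paper's statement.
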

\begin{proof}
    Statements~\ref{item:un-unique} and~\ref{item:nf-un} are straightforward
    with what preceeds.

    For statement~\ref{item:un-nf}, let $a \in X$ and $b$ be a normal form of the
    system such that $a \squigEquivRew b$. By normalisation hypothesis, there
    exists a normal form $c$ of the system such that $a \squigTransReflRew c$.
    Hence, it follows that $c \squigEquivRew b$ where $c$ and $b$ are normal
    forms. Hence, by assumption, it follows that we have $b = c$ and, thus, $a
    \squigTransReflRew b$, the desired result.

    For statement~\ref{item:unique-un}, let $a$ and $b$ be normal forms of the
    system such that $a \squigEquivRew b$. Denote by $\squigInvRew$ and
    $\squigTransReflInvRew$ the opposite relations of $\squigRew$ and
    $\squigTransReflRew$ respectively. If we let $\squigSymRew$ be the
    symmetric closure of $\squigRew$, the assertion $a \squigEquivRew b$ exactly
    means that there exists a finite tuple of elements $(\DOTS{c_1}{c_\ell}) \in
    X^\ell$, for $\ell \in \N$, such that:
    \[
        a \squigSymRew c_1 \squigSymRew c_2 \squigSymRew \cdots \squigSymRew
        c_\ell \squigSymRew b.
    \] 

    Let us call a ``valley'' in a sequence $c_1 \squigSymRew c_2 \squigSymRew
    \cdots \squigSymRew c_\ell$ any index $i \in \{\DOTS{2}{\ell-1}\}$ such that
    $c_{i-1} \squigRew c_i \squigInvRew c_{i+1}$. 

    Let us show that, for any $\ell \in \N$ and any tuple $(\DOTS{c_1}{c_\ell})
    \in X^\ell$ such that:
    \[
        a \squigInvRew c_1 \squigSymRew c_2 \squigSymRew \cdots \squigSymRew
        c_\ell \squigRew b.
    \]
    which contains $\nu \in \N \setminus \{0\}$ valleys, there exists $\ell' \in
    \N$ and $(\DOTS{d_1}{d_{\ell'}}) \in X^{\ell'}$ such that:
    \[
        a \squigInvRew d_1 \squigSymRew d_2 \squigSymRew \cdots \squigSymRew
        d_{\ell'} \squigRew b.
    \]
    which contains $\nu-1$ valleys.

    Indeed, if there are $\nu \geq 1$ valleys in the first sequence, there is a
    right-most one, by which we mean, the biggest index $i \in \{\DOTS{2}{\ell -
    1}\}$ which is a valley. In other words, we have the following diagram:
    \[ 
        c_{i-1} \squigRew c_i \squigInvRew c_{i+1} \squigInvRew c_{i+2}
        \squigInvRew \cdots \squigInvRew c_j \squigRew c_{j+1} \squigRew \cdots
        \squigRew c_\ell \squigRew b,
    \]
    for a unique $j \in \{\DOTS{i+2}{\ell}\}$.

    By normalisation hypothesis, there exists a normal form $d$ for the system
    that satisfies the relation $c_i \squigTransReflRew d$. Note how we are thus
    in the following situation:
    \[
        c_{i-1} \squigTransReflRew d \squigTransReflInvRew c_j
        \squigTransReflRew b.
    \]

    By assumption of unique normal forms reached by $\squigRew$, it follows that
    $b = d$. Thus, there exist $\ell'' \in \N$ and $(d_{i+k})_{0 \leq k \leq
    \ell''}$ such that $c_{i-1} \squigRew d_{i} \squigRew d_{i+1} \squigRew
    \cdots \squigRew d_{i+\ell''} \squigRew d = b$.

    It then suffices to define $\ell' := i + \ell''$ and $d_k := c_k$ for all $k
    \in \{\DOTS{1}{i-1}\}$ and it follows that the sequence:
    \[
        a \squigInvRew d_1 \squigSymRew d_2 \squigSymRew \cdots \squigSymRew
        d_{i-1} \squigRew d_i \squigRew d_{i+1} \squigRew \cdots \squigRew d_{i
        + \ell''} = d_{\ell'} \squigRew b.
    \]
    contains $\nu-1$ valleys.

    Now, since $\squigRew$ is a subrelation of $\topRew$, for all $c \in X$ such
    that $a \squigSymRew c$, we have:
    \begin{equation}\label{eq:a-topRew-c_1}
        a \topRew c \quad\text{if}\quad c \squigRew a \,\text{is false}.
    \end{equation}
    However, by hypothesis, the space $(X, \tau)$ is a $T_1$-space, and thus, by
    Lemma~\ref{lem:charac-normal-forms} and since $a$ is a normal form, it
    follows that (\ref{eq:a-topRew-c_1}) yields the following:
    \[
        c = a \quad\text{if}\quad c \squigRew a \,\text{is false}.
    \]
    Similarly, for any $c \in X$ such that $c \squigSymRew b$, we have $c = b$
    if $c \squigRew b$ is false. Utilising the contrapositive, this means that,
    for any sequence and $\ell \geq 1$:
    \[
        a \squigSymRew c_1 \squigSymRew c_2 \squigSymRew \cdots \squigSymRew
        c_\ell \squigSymRew b.
    \]
    there exist $i_0, i_1 \in \{\DOTS{1}{\ell}\}$ such that $i_0 \leq i_1$:
    \[
      a \squigInvRew c_{i_0} \squigSymRew c_{i_0+1} \squigSymRew c_{i_0+2}
      \squigSymRew \cdots \squigSymRew c_{i_1 - 1} \squigSymRew c_{i_1}
      \squigRew b
    \]

    Therefore, since $a \squigEquivRew b$, there necessarily exist $\ell \in \N$
    and a tuple $(\DOTS{c_1}{c_\ell}) \in X^\ell$ such that:
    \[
        a \squigInvRew c_1 \squigSymRew c_2 \squigSymRew \cdots \squigSymRew
        c_\ell \squigRew b.
    \]
    which, morevoer, contains no valley, thanks to the process given earlier.
    This means that there exists $i \in
    \{\DOTS{1}{\ell}\}$ verifying:
    \[
        a \squigInvRew c_1 \squigInvRew c_2 \squigInvRew \cdots \squigInvRew c_i
        \squigRew c_{i+1} \squigRew \cdots \squigRew c_\ell \squigRew b,
    \]
    At this point, it suffices to use the assumption of
    Definition~\ref{def:nf-props}-\ref{item:unique-nf} to conclude that $a = b$
    since the above sequence implies $c_i \squigTransReflRew a$ and $c_i
    \squigTransReflRew b$.    
\end{proof}

We now introduce two different notions of confluence for topological rewriting
systems, which are exactly the classical confluence when considering the
discrete topology.

\begin{definition}[Topological confluence]\label{def:top-conf}
    Let $(X, \tau, \rew)$ be a topological rewriting system. We say that the
    system is:
    \begin{enumerate}
        \item \emph{finitary topologically confluent} if, for all $a, b, c \in
            X$ such that $a \transReflRew b$ and $a \transReflRew c$, then there
            exists $d \in X$ such that $b \topRew d$ and $c \topRew d$. In
            diagrams:
            \[
                \begin{tikzcd}
                    & \ar{dl}[']{\star} a \ar{dr}{\star} & \\
                    b \ar[circle, dashed]{dr} & & \ar[circle, dashed]{dl} c \\
                                      & d &
                \end{tikzcd}
            \]
        \item \emph{infinitary topologically confluent} if, for all $a, b, c \in
            X$ such that $a \topRew b$ and $a \topRew c$, then there
            exists $d \in X$ such that $b \topRew d$ and $c \topRew d$. In
            diagrams:
            \[
                \begin{tikzcd}
                    & \ar[circle]{dl} a \ar[circle]{dr} & \\
                    b \ar[circle, dashed]{dr} & & \ar[circle, dashed]{dl} c \\
                                      & d &
                \end{tikzcd}
            \]
    \end{enumerate}
\end{definition}

\begin{remark}\label{rmk:infty-implies-finitary} 
    Since $\transReflRew$ is a subrelation of $\topRew$, it is clear that
    infinitary topological confluence implies finitary topological confluence of
    the system. See~\cite[Examples 4.1.3, 4.1.4]{Chenavier_2025} for
    counter-examples of the converse implication.
\end{remark}

\subsection{Attractivity of normal forms in metric
spaces}\label{ssct:attractivity-of-nf}

We described in Subsection~\ref{ssct:topological-rewriting} the basic
definitions of topological rewriting theory, including the notion of
\emph{normal forms} of a system. As we will see in
Subsection~\ref{ssct:rew-on-formal-power-series}, our primary example of
a non-discrete topological rewriting system, based on formal power
series, enjoys the property that the distance between any arbirtary starting
formal power series $f$ and any normal form of the system will always be greater
or equal to the distance between any of the successors of $f$ and the considered
normal form. In other words, rewriting cannot take us further away from any
normal forms than we already were. Intuitively, this property seems to describe
a certain kind of attractivity of normal forms; we formalise that concept in
terms of topological rewriting systems for which the underlying space is a
metric space.

\begin{definition}\label{def:attractivity-nf}
    Let $(X, \tau_\dist, \rew)$ be a topological rewriting system where $(X,
    \tau_\dist)$ is the topological space induced by a metric space $(X,
    \dist)$. We say that the system has \emph{globally attractive normal forms}
    if, for any normal form $c$ of the system and any $a, b \in X$ such that $a
    \topRew b$, we have $\dist(b, c) \leq \dist(a, c)$.
\end{definition}

\begin{remark}\label{rmk:charac-attractivity-nf}
    Note how this is equivalent to saying:
    \begin{equation}\label{eq:one-step-attractivity-nf}
        \text{if $a \rew b$, then $\dist(b, c) \leq \dist(a, c)$, for any $a, b
        \in X$ and any normal form $c$}.
    \end{equation} 
    Indeed, the necessity is immediate since $\rew$ is a subrelation of
    $\topRew$. Now, assume (\ref{eq:one-step-attractivity-nf}). Consider $a, b
    \in X$ such that $a \topRew b$ and let $c$ be a normal form of the system.
    First, we show that for any $\varepsilon \in \Rpos$, there exists $d \in X$
    such that $\dist(d, b) \leq \varepsilon$ and $\dist(d, c) \leq d(a, c)$:
    since we have $a \topRew b$ and the open ball $\B_\varepsilon(b)$ centered
    on $b$ of radius $\varepsilon$ is a neighbourhood of $b$, there exists
    $d_\varepsilon \in \B_\varepsilon(b)$ such that $a \transReflRew
    d_\varepsilon$.  Decomposing that transitive reflexive relation into a
    sequence of one-step reduction relations, we obtain, by induction and by
    (\ref{eq:one-step-attractivity-nf}), the following inequality
    $\dist(d_\varepsilon, c) \leq \dist(a, c)$. Hence, for any $\varepsilon \in
    \Rpos$ we have:
    \[
        \dist(b, c) \leq \dist(b, d_\varepsilon) + \dist(d_\varepsilon, c) \leq
        \varepsilon + \dist(a, c).
    \]
    Making $\varepsilon$ tend towards $0$ we obtain the desired result:
    $\dist(b, c) \leq \dist(a, c)$.
\end{remark}

In the next subsection, we will use this newly-defined property to prove that,
under certain assumptions, finitary topological confluence implies the normal
form property with respect to any subrelation of $\topRew$.

\subsection{When finitary topological confluence implies uniqueness of
normal forms}\label{ssct:fin-top-conf-implies-unique-nf}

Throughout this subsection, fix $(X, \tau_\dist, \rew)$ a topological rewriting
system where the underlying topological space $(X, \tau_\dist)$ is induced by a
metric space $(X, \dist)$ and $\squigRew$ a subrelation of $\topRew$. Denote as
usual $\squigTransReflRew$ the transitive reflexive closure of $\squigRew$.

The main theorem of the current subsection is stated as follows:
\begin{theorem}\label{thm:fin-top-conf-implies-unique-nf}
    Assume the following properties:
    \begin{enumerate}
        \item\label{asmpt:transitive} The relation $\topRew$ is transitive.
        \item\label{asmpt:normalising} The system is normalising with respect to
            $\squigRew$.
        \item\label{asmpt:attractivity} The system has globally attractive
            normal forms.
    \end{enumerate}
    Then, if the system is finitary topologically confluent, then it has the
    unique normal form property with respect to $\squigRew$.
\end{theorem}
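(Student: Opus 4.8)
The plan is to reduce the statement to the \emph{unique normal forms reached by $\squigRew$} property and then invoke the equivalences already proved in Proposition~\ref{prop:equivalence-of-nf-properties}. Since $(X, \dist)$ is a metric space, the topology $\tau_\dist$ is Hausdorff and in particular a $T_1$-space, and by Assumption~\ref{asmpt:normalising} the system is normalising with respect to $\squigRew$; hence Proposition~\ref{prop:equivalence-of-nf-properties}-\ref{item:unique-un} guarantees that it is enough to show that the system has unique normal forms reached by $\squigRew$. This lets me sidestep re-doing the ``valley elimination'' zigzag argument and concentrate on a single, more geometric step.

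So I would start from $a \in X$ together with two normal forms $b, c$ satisfying $a \squigTransReflRew b$ and $a \squigTransReflRew c$, aiming to prove $b = c$. First I would note that $\topRew$ is reflexive (it contains $\transReflRew$) and transitive by Assumption~\ref{asmpt:transitive}, so that the transitive reflexive closure of the subrelation $\squigRew$ remains inside $\topRew$; this upgrades the hypotheses to $a \topRew b$ and $a \topRew c$.

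The heart of the argument, and the step I expect to be the main obstacle, is to combine approximate reachability with attractivity. Fixing $\varepsilon \in \Rpos$, I would apply the characterisation of $\topRew$ from Proposition~\ref{prop:top-rew-charac} to the balls $\B_\varepsilon(b)$ and $\B_\varepsilon(c)$ to produce genuine finite reductions $a \transReflRew p$ and $a \transReflRew q$ with $p$ within $\varepsilon$ of $b$ and $q$ within $\varepsilon$ of $c$. Finitary topological confluence then supplies a common $d$ with $p \topRew d$ and $q \topRew d$, and, since $b$ and $c$ are normal forms, global attractivity (Definition~\ref{def:attractivity-nf}) forces $\dist(d, b) \leq \dist(p, b) < \varepsilon$ and $\dist(d, c) \leq \dist(q, c) < \varepsilon$. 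The triangle inequality then gives
\[
    \dist(b, c) \leq \dist(b, d) + \dist(d, c) < 2\varepsilon,
\]
and letting $\varepsilon \to 0$ yields $b = c$. The delicate point is precisely this interplay: finitary confluence only closes diagrams issuing from honest finite reductions, so one must first replace the normal forms $b, c$ by nearby finite reducts of $a$, close the diagram there, and only then use attractivity to transport the common reduct $d$ back to within $\varepsilon$ of each normal form. Once unique normal forms reached by $\squigRew$ is secured, the reduction of the first paragraph completes the proof.
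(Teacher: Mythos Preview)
Your proposal is correct and follows essentially the same approach as the paper's proof: reduce to the \emph{unique normal forms reached by $\squigRew$} property via Proposition~\ref{prop:equivalence-of-nf-properties}-\ref{item:unique-un}, then for two normal forms of a common element, approximate each by a finite reduct, close the resulting finitary diagram by confluence, and use attractivity plus the triangle inequality to bound the distance between the normal forms by an arbitrarily small quantity. The only differences are cosmetic (variable names and the paper writes $\leq 2\rho$ rather than $< 2\varepsilon$).
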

\begin{proof}
    Since $(X, \tau_\dist)$ is a metrisable space, it is Hausdorff and hence a
    $T_1$-space as well. With that in mind and using
    Assumption~\ref{asmpt:normalising}, by
    Proposition~\ref{prop:equivalence-of-nf-properties}-\ref{item:unique-un},
    it suffices to show that the system has unique normal forms reached by
    $\squigRew$ (Definition~\ref{def:nf-props}-\ref{item:unique-nf}) to prove
    our theorem. Thus, consider $x \in X$ and $\alpha, \alpha'$ be two normal
    forms such that $x \squigTransReflRew \alpha$ and $x \squigTransReflRew
    \alpha'$. Our objective is $\alpha = \alpha'$, let us prove it by
    showing that $\dist(\alpha, \alpha')$ is bounded from above by a quantity
    that converges to $0$.  Since $\squigRew$ is a subrelation of $\topRew$ and
    by Assumption~\ref{asmpt:transitive} that latter relation is transitive, the
    relation $\squigTransReflRew$ is a subrelation of $\topRew$ as well. Hence,
     $x \topRew \alpha$ and $x \topRew \alpha'$. Now, by
    Proposition~\ref{prop:top-rew-charac}, since the open balls
    $\B_\rho(\alpha)$ and $\B_\rho(\alpha')$ of radius $\rho \in \Rpos$ and centered at
    $\alpha$ and $\alpha'$ are neighbourhoods of $\alpha$ and
    $\alpha'$ respectively, there exists $(y, y') \in \B_\rho(\alpha) \times
    \B_\rho(\alpha')$ such that $x \transReflRew y$ and $x \transReflRew y'$. By
    the finitary topological confluence hypothesis, it follows that there exists
    $z \in X$ such that $y \topRew z$ and $y' \topRew z$. Now, by
    Assumption~\ref{asmpt:attractivity}, we have the inequalities $\dist(z,
    \alpha) \leq \dist(y, \alpha)$ and $\dist(z, \alpha') \leq \dist(y',
    \alpha')$. Therefore, we have:
    \[
        \dist(\alpha, \alpha') \leq \dist(\alpha, z) + \dist(z, \alpha') \leq
        \dist(y, \alpha) + \dist(y', \alpha') \leq 2\rho.
    \]
    Hence, making $\rho$ tend towards $0$ we obtain $\alpha = \alpha'$, the
    desired result.
\end{proof}

For our purposes of Subsection~\ref{ssct:cong-ideal}, we will require
the following corollary:
\begin{corollary}\label{cor:fin-top-conf-implies-nf-prop}
    Under the same
    Assumptions~\ref{asmpt:transitive},\ref{asmpt:normalising},\ref{asmpt:attractivity}
    of Theorem~\ref{thm:fin-top-conf-implies-unique-nf}, if the system is
    finitary topologically confluent, then it
    has the normal form property with respect to the relation $\squigRew$.
\end{corollary}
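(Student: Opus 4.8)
The plan is to obtain the corollary by composing the theorem just established with an implication already recorded in Proposition~\ref{prop:equivalence-of-nf-properties}, so that no new argument about the metric or about confluence is needed. Under exactly the three hypotheses in force (transitivity of $\topRew$, normalisation with respect to $\squigRew$, and global attractivity of normal forms), Theorem~\ref{thm:fin-top-conf-implies-unique-nf} guarantees that finitary topological confluence yields the unique normal form property with respect to $\squigRew$ (Definition~\ref{def:nf-props}-\ref{item:un-prop}). Hence the task reduces to a single upgrade: passing from the \emph{unique normal form property} to the \emph{normal form property} (Definition~\ref{def:nf-props}-\ref{item:nf-prop}).

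For that upgrade I would invoke Proposition~\ref{prop:equivalence-of-nf-properties}-\ref{item:un-nf}, which asserts precisely that, for a normalising system, the unique normal form property implies the normal form property. Assumption~\ref{asmpt:normalising} of the theorem supplies exactly the normalisation with respect to $\squigRew$ required as the hypothesis of that implication, so the conclusion follows at once. Chaining the two results then gives the normal form property with respect to $\squigRew$, which is the assertion of the corollary.

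Since every ingredient is already in place, there is essentially no obstacle to overcome; the only point worth checking is one of bookkeeping, namely that all the named properties in Definition~\ref{def:nf-props} and in Proposition~\ref{prop:equivalence-of-nf-properties} are read with respect to the \emph{same} subrelation $\squigRew$ fixed at the start of the subsection. This ensures that the unique normal form property \emph{produced} by Theorem~\ref{thm:fin-top-conf-implies-unique-nf} is exactly the hypothesis \emph{consumed} by Proposition~\ref{prop:equivalence-of-nf-properties}-\ref{item:un-nf}. Once that consistency is noted, the proof is immediate.
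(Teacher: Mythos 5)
Your proof is correct and follows exactly the paper's own argument: apply Theorem~\ref{thm:fin-top-conf-implies-unique-nf} to obtain the unique normal form property, then upgrade it to the normal form property via Proposition~\ref{prop:equivalence-of-nf-properties}-\ref{item:un-nf}, whose normalisation hypothesis is supplied by Assumption~\ref{asmpt:normalising}. Nothing further is needed.
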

\begin{proof}
    This is a direct consequence of
    Theorem~\ref{thm:fin-top-conf-implies-unique-nf} and
    Statement~\ref{item:un-nf} of
    Proposition~\ref{prop:equivalence-of-nf-properties}.
\end{proof}

\section{Characterisations of standard bases in formal power
series ideals}\label{sct:charac-standard-bases}

In this section, we will first recall in
Subsection~\ref{ssct:rew-on-formal-power-series} how rewriting with respect to
an ideal of formal power series works. This is essentially the same
process as multivariate polynomial reduction but with respect to the opposite
order of an admissible monomial order. We will also prove some
rewriting-theoretic properties of the induced system. Then, in
Subsection~\ref{ssct:standard-representations}, we will introduce the language
which has originally been used to characterise standard
bases~\cite{Becker_1990a}, namely \emph{standard representations} and
\emph{Hironaka remainder}.  Finally, in
Subsection~\ref{ssct:cong-ideal}, we will extend those previous
characterisations by adding some equivalent statements in terms of topological
rewriting theory. In particular, we will recover the result
of~\cite{Chenavier_2025} stating that finitary and infinitary topological
confluences are equivalent for formal power series as well as the
result of~\cite{Chenavier_2020} characterising standard bases with finitary
topological confluence (called $\delta$-confluence in that paper). Moreover, it
will be made clear that the congruence relation modulo the ideal is nothing else
than the equivalence relation generated by the topological rewriting relation
induced by the topological rewriting system from
Subsection~\ref{ssct:rew-on-formal-power-series}.

Throughout
Subsections~\ref{ssct:rew-on-formal-power-series}-\ref{ssct:standard-representations}-\ref{ssct:cong-ideal},
$\K$ is a (commutative) field of arbitrary characteristic, $n$ is a positive
integer,$\DOTS{x_1}{x_n}$ are distinct indeterminates and, as defined in
Subsection~\ref{ssct:formal-power-series}, denote by $[\X]$ (or $[\x]$ for
short) the commutative free monoid generated by $\{\X\}$ and by $\KXX$ (or
$\Kxx$ for short) the $\K$-algebra of formal power series. Consider also a fixed
admissible monomial order $<$ on $[\x]$. Recall from
Subsection~\ref{ssct:formal-power-series}, that $\Kxx$ is a complete local
equicharacteristic Noetherian ring whose $I(\x)$-adic topology is metrisable by
the metric $\delta$ defined in~(\ref{eq:delta}). In particular, the pair $(\Kxx,
\delta)$ is a complete metric space. Let $\tau_\delta$ be the induced topology
by the metric $\delta$. Finally, fix a set $R := \{\DOTS{s_1}{s_r}\} \subseteq
\Kxx \setminus \{0\}$ of exactly $r$ non-zero formal power series, where $r$ is
a non-negative integer and denote by $I(R)$ the ideal in $\Kxx$ generated by $R$.

\subsection{Rewriting on formal power
series}\label{ssct:rew-on-formal-power-series}

Define the following binary relation: for $f, g \in \Kxx$, we write $f \rew g$
if there exist a monomial $M \in \supp{f}$, an index $i \in \{\DOTS{1}{r}\}$ and
$m \in [\x]$ such that $M = m \cdot \LM{\op{<}}{s_i}$ and:
\[
    g = f - \frac{\coeff{f}{M}}{\LC{\op{<}}{s_i}} m s_i.
\]

Notice how that means that $g = (f - \coeff{f}{M}M) + (\text{terms} > M)$. In
other words, $f$ and $g$ have the same coefficients for any monomial that is
strictly smaller than $M$, and the coefficient for $M$ in $g$ is necessarily
zero.

The relation $\rew$ thus depends on $R$ and $<$. We can now consider the
topological rewriting system $\sys := (\Kxx, \tau_\delta, \rew)$ and, therefore,
the topological rewriting relation
(Definition~\ref{def:topological-rewriting-relation}) of that system that we
will denote $\topRew$. It is clear that normal forms
(Definition~\ref{def:normal-form}) of the system $\sys$ are exactly the formal
power series $f$ such that no monomial in $\supp{f}$ is a multiple of a monomial
of the form $\LM{\op{<}}{s_i}$ for any index $i \in \{\DOTS{1}{r}\}$. Denote by
$\NF{\sys}$ the set of normal forms of the system. Let us characterise the
topological rewriting relation in terms of converging sequences.

\begin{lemma}\label{lem:charac-top-rew-fps}
    For any $f, g \in \Kxx$, we have:
    \[
        f \topRew g \qquad\iff\qquad \exists (h_k)_{k \in \N} \in
        \Kxx^{\N}, \begin{cases}
            f \transReflRew h_{k} & \forall k \in \N, \\
            \lim_{k \to \infty} h_k = g.
        \end{cases}
    \]
\end{lemma}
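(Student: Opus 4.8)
The plan is to derive everything from the characterisation of the topological rewriting relation given in Proposition~\ref{prop:top-rew-charac}, exploiting the single extra feature of our situation that the topology $\tau_\delta$ is \emph{metrisable}: neighbourhoods of a point $g$ admit the open balls $\B_\varepsilon(g)$ as a base, and convergence $\lim_{k \to \infty} h_k = g$ is equivalent to $\delta(h_k, g) \to 0$. Proposition~\ref{prop:top-rew-charac} asserts that $f \topRew g$ holds if, and only if, every neighbourhood $U$ of $g$ contains some $c$ with $f \transReflRew c$; the whole lemma is then a translation of this ``every neighbourhood'' condition into the language of a single converging sequence, which is possible precisely because a metric space admits a countable neighbourhood base at each point.

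For the forward implication, I would assume $f \topRew g$ and build the sequence explicitly. For each $k \in \N$ the open ball $\B_{2^{-k}}(g)$ is a neighbourhood of $g$, so Proposition~\ref{prop:top-rew-charac} furnishes an element $h_k \in \B_{2^{-k}}(g)$ with $f \transReflRew h_k$. This choice gives $\delta(h_k, g) < 2^{-k}$, hence $\lim_{k \to \infty} h_k = g$, and by construction $f \transReflRew h_k$ for all $k$, which is exactly the right-hand side.

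For the converse, I would assume such a sequence $(h_k)_{k \in \N}$ exists and verify the criterion of Proposition~\ref{prop:top-rew-charac} directly. Let $U$ be an arbitrary neighbourhood of $g$ in $(\Kxx, \tau_\delta)$. Since $\tau_\delta$ is induced by the metric $\delta$, there is some $\varepsilon \in \Rpos$ with $\B_\varepsilon(g) \subseteq U$. From $\lim_{k \to \infty} h_k = g$ I obtain an index $K \in \N$ with $\delta(h_K, g) < \varepsilon$, so that $h_K \in \B_\varepsilon(g) \subseteq U$; as $f \transReflRew h_K$ by hypothesis, the element $c := h_K$ witnesses the condition of Proposition~\ref{prop:top-rew-charac} for this $U$. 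Since $U$ was arbitrary, $f \topRew g$ follows.

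There is no real obstacle here: both directions are bookkeeping once Proposition~\ref{prop:top-rew-charac} is in hand. The only point that genuinely uses metrisability (rather than an arbitrary topology) is the forward direction, where replacing the \emph{family} of all neighbourhoods by the single countable nested family $\bigl(\B_{2^{-k}}(g)\bigr)_{k \in \N}$ is exactly what lets one package the witnesses into one sequence converging to $g$.
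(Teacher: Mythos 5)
Your proof is correct and follows essentially the same route as the paper's: the forward direction extracts the sequence from the countable nested family of balls $\bigl(\B_{2^{-k}}(g)\bigr)_{k \in \N}$ via Proposition~\ref{prop:top-rew-charac}, and the converse (which the paper dismisses as trivial) is the same straightforward verification of that proposition's criterion. No gaps.
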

\begin{proof}
    The right-to-left direction is trivial according to
    Proposition~\ref{prop:top-rew-charac}. Now, consider the countable
    fundamental system of neighbourhoods of $g$ defined as
    $\left(\B_{2^{-k}}(g)\right)_{k \in \N}$, where the notation
    $\B_{2^{-k}}(g)$ is the open ball for the metric $\delta$ of radius $2^{-k}$
    centered at $g$. Then, if $f \topRew g$, there exists, for each $k \in \N$,
    at least one $h_k \in \B_{2^{-k}}(g)$ such that $f \transReflRew h_k$. Now,
    by the axiom of choice, it suffices to make arbitrary choices of such $h_k$
    for each $k \in \N$ to construct a sequence that satisfies the right-hand
    side of the equivalence of the lemma.
\end{proof}
\begin{remark}\label{rmk:first-countable}
    Note that this characterisation can be applied \textit{mutatis mutandis} to
    any topological rewriting system whose underlying topological space is
    \emph{first-countable}, that is to say, every element of the space has a
    countable fundamental system of neighbourhoods.
\end{remark}

Here is one important property of the system $\sys$:
\begin{proposition}\label{prop:topRew-transitive}
    If the order $<$ is compatible with the degree, then $\topRew$ is
    transitive.
\end{proposition}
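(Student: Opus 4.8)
The plan is to exploit the $\varepsilon$-characterisation of the topological rewriting relation given by Proposition~\ref{prop:top-rew-charac}: since $(\Kxx, \delta)$ is a metric space, $a \topRew b$ holds if and only if for every $\rho \in \Rpos$ there is some $c$ with $a \transReflRew c$ and $\delta(c, b) < \rho$. So, given $f \topRew g$ and $g \topRew h$, I would fix $\rho$ and try to produce a single finite reduction out of $f$ landing within $\rho$ of $h$. The natural route is: first use $g \topRew h$ to obtain a finite reduction $g \transReflRew q$ with $\delta(q, h)$ small; then use $f \topRew g$ to reach some $p$ extremely close to $g$; and finally \emph{replay} the finite reduction $g \transReflRew q$ verbatim starting from $p$, obtaining $q'$ close to $q$, so that $f \transReflRew p \transReflRew q'$ is the sought path.

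The core of the argument is thus a transport (or lifting) lemma: if $g = g_0 \rew g_1 \rew \cdots \rew g_n = q$ is a finite reduction whose $j$-th step cancels a monomial $M_j$ via some rule $s_{i_j}$ and multiplier $m_j$, and if $p$ agrees with $g$ on every monomial of degree $\le D := \max_j \deg(M_j)$, then the very same rules apply to $p$ and yield $p = p_0 \rew \cdots \rew p_n = q'$ with $q' - q = p - g$. Degree compatibility enters precisely here: by Proposition~\ref{prop:deg-lm-val} the monomial $\LM{\op{<}}{s_i}$ has minimal degree in $s_i$, so subtracting a multiple of $m_j s_{i_j}$ alters only monomials of degree $\ge \deg(M_j)$. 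Consequently $\delta(p, g) < 2^{-D}$ forces $\val{p - g} > D$, hence $p$ and $g$ share the coefficient of each $M_j$; this makes every replayed step legitimate (the reduced coefficient $\coeff{p_{j-1}}{M_j} = \coeff{g_{j-1}}{M_j}$ is nonzero and unchanged), and an easy induction on $j$ shows that the difference $p_j - g_j$ stays constant equal to $p - g$. In particular $\delta(q', q) = \delta(p, g)$.

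With the lemma established, the conclusion is routine bookkeeping. Fixing $\rho$, I choose $q$ with $g \transReflRew q$ and $\delta(q, h) < \rho/2$, read off the (finite) degree bound $D$ of that reduction, then choose $p$ with $f \transReflRew p$ and $\delta(p, g) < \min(2^{-D}, \rho/2)$. Replaying yields $q'$ with $f \transReflRew p \transReflRew q'$ and $\delta(q', q) = \delta(p, g) < \rho/2$, whence $\delta(q', h) < \rho$; as $\rho$ was arbitrary, Proposition~\ref{prop:top-rew-charac} gives $f \topRew h$. I expect the main obstacle to be nothing conceptual but rather the careful verification that each replayed step remains valid — that $M_j \in \supp{p_{j-1}}$ with the correct coefficient at every stage — which is exactly what the invariant $p_j - g_j = p - g$ together with the degree bound $\val{p - g} > D$ is designed to guarantee.
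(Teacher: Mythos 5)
Your proof is correct and is essentially the paper's own argument: the paper likewise replays a reduction step on a series sufficiently close to $g$, using degree compatibility to see that the coefficient of the reduced monomial is unchanged and that the difference of the results equals the difference of the inputs, and then composes $f \topRew g \topRew h$ via an $\varepsilon/2$ triangle-inequality step. The only difference is organisational: you package the entire finite reduction into a single transport lemma with the invariant $p_j - g_j = p - g$, whereas the paper proves the one-step case $f \topRew g \rew h \Rightarrow f \topRew h$ first and then extends by induction on the length of the reduction.
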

\begin{proof}
    Let $f, g, h \in \Kxx$. We will proceed in three steps.
    \begin{itemize}
        \item  First, we will show that if $f \topRew g \rew h$, then $f \topRew
            h$.
        \item Second, it will follow that if $f \topRew g \transReflRew h$,
            then $f \topRew h$.
        \item Finally, we will use that intermediate result to prove that, if $f
            \topRew g \topRew h$, we indeed have $f \topRew h$, hence $\topRew$
            is transitive.
    \end{itemize}

    Thus, assume firstly that $f \topRew g \rew h$. By definition of $g \rew h$,
    there exist $M \in \supp{g}$, an index $i \in \{\DOTS{1}{r}\}$ and a
    monomial $m \in [\x]$ such that $M = m \cdot \LM{\op{<}}{s_i}$ and:
    \[
        h = g - \frac{\coeff{g}{M}}{\LC{\op{<}}{s_i}} m s_i.
    \]
    From that equality, it follows that $\LM{\op{<}}{g - h} = M$. Let us use
    Proposition~\ref{prop:top-rew-charac} to show that $f \topRew h$. Let
    $U$ be an open neighbourhood of $h$. If $g$ is in $U$, then there exists $h'
    \in U$ such that $f \transReflRew h'$ since $f \topRew g$, Otherwise, assume
    that $g \notin U$. Therefore, there exists $N_U \in \N$ such that
    $\B_{2^{-N_U}}(h)$ is an open ball contained in $U$ and, thus, does not
    contain $g$. It follows from Proposition~\ref{prop:deg-lm-val}, since the
    order is, by hypothesis, compatible with the degree, that
    $\deg\left(\LM{\op{<}}{g - h}\right) = \deg(M) \leq N_U$. By
    Lemma~\ref{lem:charac-top-rew-fps}, there exists a sequence $(h_k)_{k \in
    \N}$ such that $f \transReflRew h_k$ for all $k \in \N$ and $\lim_{k \to
    \infty} h_k = g$. It follows that there exists $K_{N_U} \in \N$ verifying
    $\delta\left(h_{K_{N_U}}, g\right) < 2^{-N_U}$. Since the order is
    compatible with the degree, we obtain by Proposition~\ref{prop:deg-lm-val}
    again:
    \[
        \deg\left(\LM{\op{<}}{g - h_{K_{N_U}}}\right) > N_U \geq \deg(M) =
        \deg\left(\LM{\op{<}}{g - h}\right).
    \]
    By compatibility with the degree, we obtain $\LM{\op{<}}{g -
    h_{K_{N_U}}} > \LM{\op{<}}{g - h}$ and, therefore, $\coeff{g - h_{K_{N_U}}}{M} =
    0$, thus, $\coeff{h_{K_{N_U}}}{M} = \coeff{g}{M} \neq 0$. We can therefore
    define:
    \[
        h' := h_{K_{N_U}} - \frac{\coeff{h_{K_{N_U}}}{M}}{\LC{\op{<}}{s_i}} m
        s_i
    \]
    which by construction verifies $h_{K_{N_U}} \rew h'$. Now we compute:
    \begin{align*}
        h - h'
            &= h - h_{K_{N_U}} + \frac{\coeff{h_{K_{N_U}}}{M}}{\LC{\op{<}}{s_i}} m
            s_i \\
            &= \left(h + \frac{\coeff{g}{M}}{\LC{\op{<}}{s_i}}m s_i\right) -
            h_{K_{N_U}} \\
            &= g - h_{K_{N_U}}.
    \end{align*}

    Thus, trivially $\delta\left(h, h'\right) = \delta\left(g,
    h_{K_{N_U}}\right) < 2^{-N_U}$. Hence, $h' \in \B_{2^{-N_U}}(h) \subseteq
    U$. Moreover, by construction, we have $f \transReflRew h_{K_{N_U}} \rew
    h'$. Therefore, we did exhibit $h' \in U$, for $U$ arbitrary open
    neighbourhood of $h$, such that $f \transReflRew h'$. Thus, $f \topRew h$.

    Secondly, by a simple induction argument, if $f \topRew g \transReflRew h$,
    we have $f \topRew h$ since we decompose $g \transReflRew h$ into a finite
    sequence of one-step reduction relations and thus can apply step-by-step the
    first result of the proof.

    Thirdly, assume $f \topRew g \topRew h$. By
    Lemma~\ref{lem:charac-top-rew-fps}, there exists a sequence $(h_k)_{k \in
    \N}$ such that, for any $k \in \N$, $g \transReflRew h_k$ and
    $\lim_{k \to \infty} h_k = h$. Let us show that $f \topRew h$ by proving
    that, for any $\varepsilon \in \Rpos$, there exists $h' \in
    \B_{\varepsilon}(h)$ such that $f \transReflRew h'$ which enables to
    conclude via Proposition~\ref{prop:top-rew-charac}. Let $\varepsilon \in
    \Rpos$. Then, since $(h_k)_{k \in \N}$ converges to $h$, there exists
    $K_\varepsilon \in \N$ such that $\delta\left(h, h_{K_{\varepsilon}}\right)
    < \frac{\varepsilon}{2}$. Moreover, we have $f \topRew g \transReflRew
    h_{K_{\varepsilon}}$. Hence, by the second result of this proof, we have $f
    \topRew h_{K_{\varepsilon}}$. Using Lemma~\ref{lem:charac-top-rew-fps}
    again, there exists a sequence $(h'_k)_{k \in \N}$ such that, for all $k \in
    \N$, $f \transReflRew h'_k$ and $\lim_{k \to \infty} h'_k =
    h_{K_\varepsilon}$. By that latter convergence property, there exists
    $K'_\varepsilon \in \N$ such that $\delta\left(h_{K_\varepsilon},
    h'_{K'_\varepsilon}\right) < \frac{\varepsilon}{2}$. Hence:
    \[
        \delta\left(h, h'_{K'_\varepsilon}\right) \leq \delta\left(h,
            h_{K_\varepsilon}\right) + \delta\left(h_{K_\varepsilon},
        h'_{K'_\varepsilon}\right) < \frac{\varepsilon}{2} +
        \frac{\varepsilon}{2} = \varepsilon.
    \]
    Finally, by definition of the sequence $(h'_k)_{k \in \N}$, we have $f
    \transReflRew h'_{K'_\varepsilon}$. Which concludes the proof that $f
    \topRew h$.
    
\end{proof}

We will now prove a useful lemma.
\begin{lemma}\label{lem:cauchy-rewriting-seq}
    Let $(f_k)_{k \in \N}$ be a sequence in $\Kxx$. Assume the monomial order $<$ is
    compatible with the degree. If, for all $k \in \N$, we have $f_k
    \rew f_{k+1}$, then the sequence $(f_k)_{k \in \N}$ is convergent in $\Kxx$.
\end{lemma}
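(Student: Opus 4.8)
The plan is to use completeness of $(\Kxx,\delta)$: since that metric space is complete, it is enough to prove that $(f_k)_{k\in\N}$ is a Cauchy sequence. For each $k$ let $M_k\in\supp{f_k}$ denote the monomial that is cancelled in the step $f_k\rew f_{k+1}$, so that $f_{k+1}-f_k=-\frac{\coeff{f_k}{M_k}}{\LC{\op{<}}{s_i}}\,m\,s_i$ for suitable $i$ and $m$ with $M_k=m\cdot\LM{\op{<}}{s_i}$. I would first record two consequences of compatibility with the degree. By Proposition~\ref{prop:deg-lm-val} one has $\delta(f_k,f_{k+1})=2^{-\deg(M_k)}$, that is $\val{f_{k+1}-f_k}=\deg(M_k)$; in particular every monomial occurring in $f_{k+1}-f_k$ has degree at least $\deg(M_k)$. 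Moreover, by the description of $\rew$ given just after its definition, $f_{k+1}$ and $f_k$ have the same coefficients on all monomials $<M_k$, the coefficient of $M_k$ in $f_{k+1}$ is $0$, and $f_{k+1}$ differs from $f_k$ only by terms supported on monomials $>M_k$.

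The whole argument then rests on the claim that $\deg(M_k)\to\infty$. Granting this, the Cauchy property is immediate: given $N\in\N$, choose $K$ larger than every index $k$ with $\deg(M_k)\le N$; then for $K\le k<l$ the telescoping identity $f_l-f_k=\sum_{j=k}^{l-1}(f_{j+1}-f_j)$ exhibits $f_l-f_k$ as a finite sum of power series each supported in degrees $>N$, whence $\val{f_l-f_k}>N$ and $\delta(f_k,f_l)<2^{-N}$. Letting $N$ grow gives the Cauchy property, hence convergence.

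It therefore remains to prove that, for each fixed $N\in\N$, only finitely many indices $k$ satisfy $\deg(M_k)\le N$. Here I would pass to the degree-$\le N$ truncation $\tilde f_k:=\sum_{\deg\mu\le N}\coeff{f_k}{\mu}\,\mu$, which lives in the finite-dimensional $\K$-vector space spanned by the finite set $D_N$ of monomials of degree at most $N$. From the structural description above, a step with $\deg(M_k)>N$ leaves $\tilde f_k$ unchanged (both $M_k$ and the newly created terms sit in degree $>N$), whereas a step with $\deg(M_k)\le N$ deletes $M_k$ from $\supp{\tilde f_k}$ and can only add support at monomials of $D_N$ that are strictly greater than $M_k$ for $<$.

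The hard part is that $\deg(M_k)$ is not monotone — reducing a low monomial may re-create other low monomials — so a plain induction on degree fails. To overcome this I would introduce a potential that strictly decreases at every low step. Enumerating $D_N=\{\nu_1<\cdots<\nu_p\}$ increasingly for $<$, set $\Phi(\tilde f):=\sum_{\nu_j\in\supp{\tilde f}}2^{-j}$. A step with $\deg(M_k)\le N$, say $M_k=\nu_r$, removes the weight $2^{-r}$ and can introduce new weight at most $\sum_{j>r}2^{-j}=2^{-r}-2^{-p}<2^{-r}$, so $\Phi(\tilde f_{k+1})<\Phi(\tilde f_k)$; all other steps leave $\Phi$ unchanged. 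Since $\Phi$ takes only finitely many values (one for each subset of $D_N$), a strict decrease can occur only finitely often, so there are only finitely many $k$ with $\deg(M_k)\le N$. This establishes the claim and completes the proof.
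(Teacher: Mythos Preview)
Your proof is correct. Both your argument and the paper's reduce the lemma to the key claim that for every bound $N$ only finitely many steps reduce a monomial of degree at most $N$, and then deduce the Cauchy property by telescoping; the difference lies in how this claim is established. The paper argues directly on the finite set $L_1=\{M_k:k>k_1,\ M_k\le M_{k_1}\}$: its minimum is reduced at some step and, being minimal, can never be recreated, so it disappears from all subsequent $L_j$; iterating strictly shrinks a finite set until it is empty. You instead pass to the degree-$\le N$ truncation and introduce the potential $\Phi(\tilde f)=\sum_{\nu_j\in\supp{\tilde f}}2^{-j}$, which encodes the support as a binary fraction and drops by at least $2^{-p}$ at every low-degree step. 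Your approach is a standard multiset/potential termination argument and is arguably cleaner and more portable (it gives a uniform strict decrease rather than an iterated elimination), while the paper's argument is slightly more hands-on but makes explicit why the minimal reducible monomial is permanently eliminated. Substantively the two arguments are close cousins: both exploit that a step at $M_k$ only affects support at monomials $\ge M_k$, and both leverage finiteness of $D_N$ to rule out infinitely many low-degree steps.
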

\begin{proof}
    For each $k \in \N$, take the monomial $M_k \in \supp{f_k}$, the index
    $i_k \in \{\DOTS{1}{r}\}$ and the monomial $m_k \in [\x]$ such that $M_k =
    m_k \cdot \LM{\op{<}}{s_{i_k}}$ and $f_{k} - f_{k+1} =
    \frac{\coeff{f_k}{M_k}}{\LC{\op{<}}{s_{i_k}}} m_k s_{i_k}$. 

    Firstly, let us show that, for any $k_1 \in \N$, there exists $k' > k_1$
    such that $M_{k_1} < M_k$ for all $k \geq k'$. Let $k_1 \in \N$. Let $M :=
    M_{k_1}$ and define $L_1 := \ens{M_k}{k > k_1, M_k \leq M}$. Since there are
    only finitely many indeterminates and the order is compatible with the
    degree, $L_1$ is finite because $M$ is an upper bound. If $L_1$ is empty,
    then all the subsequent $M_k$ for $k > k_1$ are then bigger than $M$ which
    is the desired result. Assume thus $L_1$ non-empty and denote by $M_1^{\min}
    := \min L_1$ and $k_2 := \min \ens{k \in \N}{k > k_1, M_{k} = M_1^{\min}}$.
    By definition, $M_1^{\min}$ is the monomial in $\supp{f_{k_2}}$ that is
    reduced at step $k_2$. Hence, it follows that $\coeff{f_{k}}{M_1^{\min}} =
    0$ for all $k > k_2$ since $M_1^{\min}$ is the minimum of the monomials
    reduced after step $k_1$. Then, consider the subset $L_2 := \ens{M_{k}}{k >
    k_2, M_k \leq M} \subseteq L_1$ and notice how $M_1^{\min} \notin L_2$ by
    the previous discussion. Repeat this process \textit{ad infinitum} replacing
    the index $j$ by $j+1$ in the definitions of $L_{j+1}$, $M_{j+1}^{\min}$ and
    $k_{j+2}$. Notice that, at each step $j$, we have $L_{j+1} \subsetneq L_j$
    because $M_j^{\min} \notin L_{j+1}$.  But since $L_1$ is finite, there
    necessarily exists $j \geq 1$ such that we have $L_{j+1} := \ens{M_k}{k >
    k_{j+1}, M_k \leq M_{k_1}} = \emptyset$.  Finally, this implies that
    $M_{k_{j+1} + \ell} \notin L_1$, and thus, $M_{k_{j+1} + \ell} > M_{k_1}$,
    for all $\ell \geq 1$.

    Now, let $D \in \N$. Since the order is compatible with the degree and using
    the previous part of this proof, the sequence $(\deg(M_k))_{k \in \N}$ is
    eventually bigger than $D$, \ie, there exists a rank $k_D \in \N$ such that
    $\deg(M_{k}) > D$ for all $k \geq k_D$.  But, a straightforward computation
    shows that, for $k_1, k_2 \in \N$ with $k_D \leq k_1 < k_2$:
    \[
        \deg\left(\LM{\op{<}}{f_{k_1} - f_{k_2}}\right) = \min
        \ens{\deg(M_k)}{k_1 \leq k < k_2} > D.
    \]
    By compatibility with the degree, we use Proposition~\ref{prop:deg-lm-val}
    to show that $\delta(f_{k_1}, f_{k_2}) < 2^{-D}$ which in turn proves that
    $(f_k)_{k \in \N}$ is Cauchy. Now, since $\Kxx$ is complete as metric space,
    it follows that the sequence $(f_k)_{k \in \N}$ does indeed converge to a
    limit in $\Kxx$.
\end{proof}

\begin{remark}\label{rmk:cauchy-rewriting-sequences}
    Denote by $\reflRew$ the reflexive closure of $\rew$, \ie, $f \reflRew g$ if
    and only if $f = g$ or $f \rew g$. As sequences that are eventually
    stationary are Cauchy sequences in a trivial way, any sequence $(f_k)_{k \in
    \N}$ such that $f_k \reflRew f_{k+1}$ (and thus, $f_k \transReflRew
    f_{k+1}$) for all $k \in \N$ is convergent in $\Kxx$, by
    Lemma~\ref{lem:cauchy-rewriting-seq}.
\end{remark}

Now, consider the following relation that we will call the \emph{topological
rewriting with chains relation}:
\[
    f \topChainsRew g \qquad\text{when}\qquad \exists (h_k)_{k \in \N} \in
    \Kxx^{\N}, \begin{cases}
        h_0 = f \\
        h_k \reflRew h_{k+1} & \forall k \in \N, \\
        \lim_{k \to \infty} h_k = g,
    \end{cases}
\]
By Lemma~\ref{lem:charac-top-rew-fps}, it is immediate that $\topChainsRew$ is a
subrelation of $\topRew$, because, if two elements are linked by a finite
sequence of $\reflRew$, then they are related through $\transReflRew$. However,
it is still an open question if $\topChainsRew$ is actually equal to $\topRew$
in the system $\sys$: this is the content of the problem we
call the \emph{chains conjecture}.

\begin{proposition}\label{prop:topChainsTransRew-normalising}
    If the order $<$ is compatible with the degree, then, for all $f \in \Kxx$,
    there exists a normal form $\alpha \in \NF{\sys}$ such that $f \topChainsRew
    \alpha$. In particular, the system $\sys$ is normalising with respect to
    $\topChainsRew$.
\end{proposition}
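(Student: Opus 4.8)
The plan is to construct, for an arbitrary $f \in \Kxx$, an explicit chain of reductions that eliminates reducible monomials in increasing order and whose limit is a normal form. Since $<$ is admissible and compatible with the degree, it has order type $\omega$, so I can fix an enumeration $m_0 < m_1 < m_2 < \cdots$ of all the monomials of $[\x]$. I then define a sequence $(h_k)_{k \in \N}$ by $h_0 := f$ and, at stage $j$, by letting $h_{j+1}$ be the result of the (essentially unique) one-step reduction cancelling the coefficient at $m_j$ whenever $m_j \in \supp{h_j}$ and $m_j$ is a multiple of some $\LM{\op{<}}{s_i}$, and $h_{j+1} := h_j$ otherwise; in either case $h_j \reflRew h_{j+1}$.

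The key structural fact, noted right after the definition of $\rew$, is that reducing a monomial $M$ leaves every coefficient at a monomial $< M$ unchanged, sets the coefficient at $M$ to zero, and only introduces terms $> M$. Since stages $j+1, j+2, \dots$ only ever reduce the monomials $m_{j+1}, m_{j+2}, \dots$, all of which exceed $m_j$, none of them alters the coefficient of $m_j$. Hence $\coeff{h_k}{m_j}$ is constant for $k \geq j+1$, and by construction $m_j$ is, in $h_{j+1}$, either absent from the support or not a multiple of any $\LM{\op{<}}{s_i}$.

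To see that $(h_k)_k$ converges, observe that each genuine step satisfies $\LM{\op{<}}{h_k - h_{k+1}} = m_k$, so $\delta(h_k, h_{k+1}) = 2^{-\deg(m_k)}$ by Proposition~\ref{prop:deg-lm-val}; because $m_k$ tends to infinity for $<$ and $<$ is compatible with the degree, $\deg(m_k) \to \infty$. The estimate $\val{h_{k_1} - h_{k_2}} \geq \min_{k_1 \leq k < k_2} \val{h_k - h_{k+1}}$ then makes $(h_k)_k$ a Cauchy sequence in the complete space $\Kxx$, which is precisely the phenomenon recorded in Lemma~\ref{lem:cauchy-rewriting-seq} and Remark~\ref{rmk:cauchy-rewriting-sequences}. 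Writing $\alpha$ for the limit and using that convergence for $\delta$ is coefficientwise, the stabilisation above yields $\coeff{\alpha}{m_j} = \coeff{h_{j+1}}{m_j}$ for every $j$.

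It remains to identify $\alpha$ as a normal form. If $m_j$ is a multiple of some $\LM{\op{<}}{s_i}$, stage $j$ guarantees $\coeff{h_{j+1}}{m_j} = 0$, whence $\coeff{\alpha}{m_j} = 0$ and $m_j \notin \supp{\alpha}$; therefore no element of $\supp{\alpha}$ is a multiple of any $\LM{\op{<}}{s_i}$, i.e.\ $\alpha \in \NF{\sys}$. As $h_0 = f$, $h_k \reflRew h_{k+1}$ for all $k$ and $h_k \to \alpha$, the defining condition of $\topChainsRew$ gives $f \topChainsRew \alpha$, and normalisation of $\sys$ with respect to $\topChainsRew$ is immediate. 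The one genuinely delicate point is the \emph{stability} of this bottom-up strategy, namely that monomials already treated are never disturbed by later reductions; this rests entirely on compatibility with the degree together with the fact that a reduction strictly raises the monomials it creates, and it is exactly this stability that simultaneously secures both the convergence of the chain and the fact that its limit is reduced.
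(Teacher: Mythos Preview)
Your proof is correct and takes essentially the same approach as the paper's: build a chain by reducing monomials in increasing order and exploit the key stability fact that a one-step reduction at $M$ never disturbs coefficients at monomials $< M$, which simultaneously yields convergence (via Lemma~\ref{lem:cauchy-rewriting-seq}) and normal-form-ness of the limit. The only cosmetic differences are that you index by the global enumeration of $[\x]$ (inserting idle steps) whereas the paper indexes by genuine reduction steps (always picking $\min_< R_{h_k}$), and that your coefficient-stabilisation argument for $\alpha \in \NF{\sys}$ is a direct version of the paper's proof by contradiction.
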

\begin{proof}
    For any $f \in \Kxx$, denote by $R_f := \ens{m \cdot \LM{\op{<}}{s_i} \in
    \supp{f}}{m \in [\x], i \in \{\DOTS{1}{r}\}}$. This is the set of reducible
    monomials in the support of $f$. Obviously, we have $f \in \NF{\sys}$ if and
    only if $R_f = \emptyset$. Let $f \in \Kxx$. Let us construct a sequence
    $(h_k)_{k \in \N}$ inductively as follows. Start by setting $h_0 := f$. Now,
    let $K \in \N$ and assume we constructed a sequence $h_0 \reflRew h_1
    \reflRew \cdots \reflRew h_K$ such that, for all $k \in \{\DOTS{0}{K-1}\}$,
    if neither $h_k$ nor $h_{k+1}$ are normal forms, \ie{} if we have $R_{h_k}
    \neq \emptyset \neq R_{h_{k+1}}$, then $\min_< R_{h_k} < \min_<
    R_{h_{k+1}}$.  If $R_{h_K} = \emptyset$, define all the subsequent terms of
    the sequence as $h_{K+\ell} := h_K$ for all $\ell \geq 1$. Now, assume that
    $R_{h_K} \neq \emptyset$. Let $M_K := \min_< R_{h_K}$. It exists since the
    order is admissible. Choose $i_K := \min \ens{i \in \{\DOTS{1}{r}\}}{\exists
    m \in [\x], M_K = m \cdot \LM{\op{<}}{s_{i}}}$ and $m_K :=
    \frac{M_K}{\LM{\op{<}}{s_{i_K}}}$. Define:
    \[
        h_{K+1} := h_K - \frac{\coeff{h_K}{M_K}}{\LC{\op{<}}{s_{i_K}}} m_K
        s_{i_K}.
    \]
    Notice how $h_K \rew h_{K+1}$ and, if $R_{h_{K+1}} \neq \emptyset$, then
    $\min_< R_{h_K} < \min_< R_{h_{K+1}}$ because we rewrote the smallest
    reducing monomial in $\supp{h_K}$ and thus, $\coeff{h_{K+1}}{M_K} = 0$ and
    every new potential reducible monomial introduced by the rewriting process
    is necessarily bigger than $M_K$.

    In all cases, repeating this process \textit{ad infinitum} yields a sequence
    $(h_k)_{k \in \N}$ that satisfies the relations $h_0 = f$ and $h_k \reflRew
    h_{k+1}$ for all $k \in \N$. Now, by compatibility with the degree we can
    use Lemma~\ref{lem:cauchy-rewriting-seq} and
    Remark~\ref{rmk:cauchy-rewriting-sequences}. Hence, denote the limit of
    $(h_k)_{k \in \N}$ by $g := \lim_{k \to \infty} h_k$. By definition, we
    have $f \topChainsRew g$. Finally, notice how $g$ is a normal form: indeed,
    if $R_g \neq \emptyset$, say there exists $M := m \cdot \LM{\op{<}}{s_i} \in
    \supp{g}$ for some $m \in [\x]$ and $i \in \{\DOTS{1}{r}\}$ and let $D :=
    \deg(M)$. Since $g$ is the limit of the sequence, there exists $K \in \N$
    such that we have $\delta\left(h_k, g\right) < 2^{-D}$ for all $k \geq K$.
    By compatibility with the degree and Proposition~\ref{prop:deg-lm-val}, we
    obtain, for all $k \geq K$, $\LM{\op{<}}{h_k - g} > M$ and hence
    $\coeff{h_k}{M} = \coeff{g}{M} \neq 0$, so $M \in R_{h_k}$. But, since the
    monomial order $<$ is admissible and compatible with the degree and the
    fact that we have only finitely many indeterminates, for any monomial $m \in
    [\x]$, there are only finitely many monomials that are smaller than $m$ for
    $<$. However, by construction, the sequence $\left(\min_< R_{h_k}\right)_{k
    \in \N}$ is strictly increasing for $<$. Now, since we saw that our
    assumption entails that, for all $k \geq K$, we have $M \in R_{h_k}$ and,
    thus, $\min_< R_{h_k} < M$, this would mean that there are infinitely many
    monomial smaller than $M$ for $<$, because the sequence $\left(\min_>
    R_{h_k}\right)_{k \in \N}$ is strictly increasing but bounded from above by
    $M$. This is a contradiction, hence $g$ is a normal form.
\end{proof}

To conclude this section, we will prove, as we previously said, that the system
$\sys$ has globally attractive normal forms
(Definition~\ref{def:attractivity-nf}) when the order is compatible with the
degree.

\begin{proposition}\label{prop:Kxx-attractive-nf}
    If the order $<$ is compatible with the degree, the system $\sys$ has
    globally attractive normal forms.
\end{proposition}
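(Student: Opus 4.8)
The plan is to invoke Remark~\ref{rmk:charac-attractivity-nf}, which reduces the statement to the one-step version: for any normal form $c \in \NF{\sys}$ and any $f, g \in \Kxx$ with $f \rew g$, one has $\delta(g, c) \leq \delta(f, c)$. Since $<$ is compatible with the degree, Proposition~\ref{prop:deg-lm-val} allows me to rephrase distances in terms of valuations, $\delta(h, c) = 2^{-\val{h - c}}$, so it suffices to establish the inequality of integers $\val{g - c} \geq \val{f - c}$.

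I would begin by unpacking $f \rew g$: there exist a monomial $M = m \cdot \LM{\op{<}}{s_i} \in \supp{f}$, an index $i \in \{\DOTS{1}{r}\}$ and a monomial $m \in [\x]$ such that $g = f - \frac{\coeff{f}{M}}{\LC{\op{<}}{s_i}} m s_i$. The key observation, and the crux of the argument, is that this reducible monomial $M$ must have degree at least $\val{f - c}$. Indeed, since $c$ is a normal form, no monomial of $\supp{c}$ is a multiple of any $\LM{\op{<}}{s_j}$; in particular $M \notin \supp{c}$, so $\coeff{c}{M} = 0$ and therefore $\coeff{f - c}{M} = \coeff{f}{M} \neq 0$. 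Hence $M \in \supp{f - c}$, and compatibility with the degree forces $\deg(M) \geq \val{f - c}$.

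Next I would control the correction term $g - f = -\frac{\coeff{f}{M}}{\LC{\op{<}}{s_i}} m s_i$. Its leading monomial for $\op{<}$ is $m \cdot \LM{\op{<}}{s_i} = M$ (leading monomials being multiplicative under monomial orders), so every monomial occurring in $g - f$ is $\geq M$ for $<$; by compatibility with the degree each such monomial has degree at least $\deg(M)$, whence $\val{g - f} \geq \deg(M) \geq \val{f - c}$. Writing $g - c = (f - c) + (g - f)$ and using the ultrametric estimate $\val{u + v} \geq \min\left(\val{u}, \val{v}\right)$, I conclude $\val{g - c} \geq \min\left(\val{f - c}, \val{g - f}\right) \geq \val{f - c}$, which is precisely the sought inequality $\delta(g, c) \leq \delta(f, c)$.

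Finally I would dispatch the two degenerate cases. The case $f = c$ cannot occur, since $c$ is a normal form and thus admits no one-step reduction, so $f \neq c$ and $\val{f - c}$ is finite; the case $g = c$ gives $\delta(g, c) = 0$ and the inequality holds trivially. The main obstacle is genuinely the degree bound $\deg(M) \geq \val{f - c}$: everything surrounding it is a routine valuation computation, and it is exactly the normal-form hypothesis on $c$ that guarantees the reduced monomial $M$ survives in the difference $f - c$ and hence cannot have degree below $\val{f - c}$.
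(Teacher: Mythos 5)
Your proof is correct and follows essentially the same route as the paper's: both reduce to the one-step case via Remark~\ref{rmk:charac-attractivity-nf}, and both hinge on the observation that the reduced monomial $M$, being a multiple of some $\LM{\op{<}}{s_i}$, cannot lie in the support of the normal form $c$ and therefore survives in $f - c$, while the correction term $g - f$ only involves monomials at least $M$. The only difference is bookkeeping: you run the final comparison through valuations and the ultrametric inequality, whereas the paper compares leading monomials directly in the order $<$ and then converts to degrees via Proposition~\ref{prop:deg-lm-val}.
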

\begin{proof}
    Let $f, g \in \Kxx$ and $\alpha \in \NF{\sys}$. By
    Remark~\ref{rmk:charac-attractivity-nf}, it suffices to show that, if $f
    \rew g$, then $\delta(g, \alpha) \leq \delta(f, \alpha)$. But, by
    definition, if $f \rew g$, there exists $M \in \supp{f}$, $i \in
    \{\DOTS{1}{r}\}$ and $m \in [\x]$ such that $M = m \cdot \LM{\op{<}}{s_i}$
    and:
    \[
        g = f - \frac{\coeff{f}{M}}{\LC{\op{<}}{s_i}} m s_i.
    \]
    If $g = \alpha$, the result is trivial since $\delta(g, \alpha) = 0$. Assume
    thus $g \neq \alpha$. Then we first want to show that $\LM{\op{<}}{g -
    \alpha} \geq \LM{\op{<}}{f - \alpha}$. Indeed, let $m' \in [\x]$ with $m' <
    \LM{\op{<}}{f - \alpha}$, then, by definition of leading monomial,
    $\coeff{f}{m'} = \coeff{\alpha}{m'}$. Hence, if $m' \in \supp{f}$, it is
    irreducible. By contrapositive, we deduce that $\LM{\op{<}}{f - \alpha} \leq
    M$, since $M$ is reducible and in $\supp{f}$ by definition. But, since $M$
    is the reduced monomial in the rewrite step $f \rew g$, it follows that, for
    all $m' < M$, we have $\coeff{g}{m'} = \coeff{f}{m'}$. If we impose
    furthermore that $m' < \LM{\op{<}}{f - \alpha}$, we get $\coeff{g -
    \alpha}{m'} = \coeff{f - \alpha}{m'} = 0$, which is what we wanted to first
    prove. Now, since the order is assumed to be compatible with the degree, we
    can use Proposition~\ref{prop:deg-lm-val} to conclude that $\delta(g,
    \alpha) \leq \delta(f, \alpha)$.
\end{proof}

\subsection{Standard representations}\label{ssct:standard-representations}

In this subsection, we present the language of standard representations in which
standard bases were originally characterised.

\begin{definition}\label{def:standard-rep}
    Let $f \in \Kxx \setminus \{0\}$. We say that $f$ admits a \emph{standard
    representation with respect to $R$ and $<$} if there exists a $r$-tuple
    $(\DOTS{q_1}{q_r}) \in \Kxx^r$ that satisfies the following two conditions:
    \begin{enumerate}
        \item The $q_i$'s are cofactor coefficients for $f$ with respect to $R$,
            \ie:
            \[
                f = \sum_{i = 1}^{r} q_i s_i.
            \]
        \item There are no cancellations of the least leading terms of the $q_i
            s_i$'s, \ie:
            \[
                \LM{\op{<}}{f} = \min_{<} \ens{\LM{\op{<}}{q_i s_i}}{1 \leq i
                \leq r, q_i \neq 0}.
            \]
    \end{enumerate}
\end{definition}

\begin{remark}\label{rmk:standard-rep-reducible}
    It is evident from the definition that any non-zero formal power series $f$
    that admits a standard representation with respect to $R$ and $<$ is in the
    ideal $I(R)$ and has a leading monomial $\LM{\op{<}}{f}$ which is reducible
    in the system $\sys$. Indeed, choose arbitrarily among the rules in $R$ that
    reduce the leading monomial of $f$ for the opposite order of $<$, \ie{}
    choose any $i_0 \in \ens{i \in \{\DOTS{1}{r}\}}{q_i \neq 0,
    \LM{\op{<}}{q_is_i} = \LM{\op{<}}{f}}$, then:
    \[
        \LM{\op{<}}{f} = \LM{\op{<}}{q_{i_0}s_{i_0}} = \LM{\op{<}}{q_{i_0}} \cdot
        \LM{\op{<}}{s_{i_0}}.
    \]
\end{remark}

It is known from \textsc{Hironaka}'s theorem (see for
instance~\cite[Corollary 2.2]{Becker_1990a}) that $R$ is a standard basis of
$I(R)$ for $<$ if, and only if, every non-zero $f$ in $I(R)$ has a standard
representation with respect to $R$ and $<$. We shall recover that result in
Subsection~\ref{ssct:cong-ideal}, for the case of an order compatible
with the degree, using the following proposition.
\begin{proposition}\label{prop:standard-rep-standard-basis}
    Assume the order $<$ is compatible with the degree. If $f \in \Kxx \setminus
    \{0\}$ is such that $f \topChainsRew 0$, then $f$ admits a standard
    representation with respect to $R$ and $<$.
\end{proposition}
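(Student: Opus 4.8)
The plan is to turn the hypothesis $f \topChainsRew 0$ into an explicit chain and then regroup its steps according to which rule is applied at each one. By definition of $\topChainsRew$ there is a sequence $(h_k)_{k \in \N}$ with $h_0 = f$, $h_k \reflRew h_{k+1}$ for all $k$, and $\lim_{k \to \infty} h_k = 0$. At each index $k$ where the step is a genuine reduction $h_k \rew h_{k+1}$, I would record the data $M_k = m_k \cdot \LM{\op{<}}{s_{i_k}} \in \supp{h_k}$ and set $c_k := \coeff{h_k}{M_k}/\LC{\op{<}}{s_{i_k}}$, so that $h_k - h_{k+1} = c_k m_k s_{i_k}$; at the trivial steps $h_k = h_{k+1}$ this difference vanishes. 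Telescoping the partial sums and using $h_k \to 0$ then gives
\[
    f = h_0 = \sum_{k \in \N} (h_k - h_{k+1}) = \sum_{k} c_k m_k s_{i_k},
\]
where the last sum ranges over the reduction steps.

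The first point to settle is convergence, which I would obtain from Proposition~\ref{prop:deg-lm-val}. Since $h_k \to 0$, we have $\val{h_k} \to \infty$, and as $M_k \in \supp{h_k}$ this forces $\deg(M_k) \geq \val{h_k} \to \infty$ along the reduction steps; because there are only finitely many rules, $\deg(m_k) = \deg(M_k) - \deg\left(\LM{\op{<}}{s_{i_k}}\right) \to \infty$ too. Hence for each fixed degree only finitely many terms $c_k m_k s_{i_k}$ have valuation below it, so the family is summable in the complete space $\Kxx$ and may be regrouped freely. Setting
\[
    q_i := \sum_{k : i_k = i} c_k m_k \in \Kxx \qquad (1 \leq i \leq r),
\]
each $q_i$ converges (its terms satisfy $\val{c_k m_k} = \deg(m_k) \to \infty$), and the regrouping yields $f = \sum_{i=1}^{r} q_i s_i$, which is the first condition of a standard representation.

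For the second condition I would isolate the invariant that rules out cancellation. Writing $\mu := \LM{\op{<}}{f} = \min_< \supp{f}$, I claim $\supp{h_k} \subseteq \ens{n \in [\x]}{n \geq \mu}$ for every $k$, by a short induction: the step $h_{k+1} = h_k - c_k m_k s_{i_k}$ only alters coefficients at monomials $\geq M_k$, and $M_k \in \supp{h_k}$ is itself $\geq \mu$ by the inductive hypothesis, while every other monomial of $m_k s_{i_k}$ is $> M_k$. Consequently each difference $h_k - h_{k+1}$, and hence (passing to the limit coefficient by coefficient, $\K$ being discrete) each $q_i s_i = \sum_{k : i_k = i}(h_k - h_{k+1})$, is supported on $\ens{n}{n \geq \mu}$. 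This gives $\LM{\op{<}}{q_i s_i} \geq \mu$ for every $i$ with $q_i \neq 0$, so $\min_< \ens{\LM{\op{<}}{q_i s_i}}{q_i \neq 0} \geq \mu$. The reverse inequality follows directly from $f = \sum_i q_i s_i$: any monomial strictly below that minimum lies outside every $\supp{q_i s_i}$, hence has coefficient $0$ in $f$, forcing $\mu \geq \min_< \ens{\LM{\op{<}}{q_i s_i}}{q_i \neq 0}$. Combining the two inequalities gives equality, which is exactly the no-cancellation condition.

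The main obstacle I anticipate is the bookkeeping around the infinite regrouping: one must justify both that each $q_i$ is a well-defined power series and that $\sum_k c_k m_k s_{i_k} = \sum_i q_i s_i$, and this rests entirely on the degree blow-up $\deg(M_k) \to \infty$ coming from $h_k \to 0$, mediated by compatibility with the degree through Proposition~\ref{prop:deg-lm-val}. Once that summability is secured, the support invariant and the two-sided comparison of leading monomials are elementary.
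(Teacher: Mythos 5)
Your proposal is correct and follows essentially the same route as the paper: the cofactors are obtained by accumulating the multipliers $\frac{\coeff{h_k}{M_k}}{\LC{\op{<}}{s_{i_k}}}\, m_k$ along the chain (grouped by rule index), with convergence secured by the degree blow-up $\deg(M_k) \to \infty$ forced by $h_k \to 0$ together with compatibility with the degree. Your handling of the no-cancellation condition, via the support invariant $\supp{h_k} \subseteq \ens{n \in [\x]}{n \geq \LM{\op{<}}{f}}$ and a two-sided comparison of leading monomials, is if anything slightly more explicit than the paper's observation that $\LM{\op{<}}{f}$ is reduced at a unique step of the chain, but it rests on the same underlying fact that each reduction only alters coefficients at monomials $\geq M_k$.
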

\begin{proof}
    By definition of $f \topChainsRew 0$, there exists a sequence $(h_k)_{k \in
    \N}$ in $\Kxx$ such that $h_0 = f$, for any $k \in \N$, we have $h_k
    \reflRew h_{k+1}$ and finally, $\lim_{k \to \infty} h_k = 0$. Let us
    construct inductively an $r$-tuple of sequences
    $\left(\DOTS{q_1^{(k)}}{q_r^{(k)}}\right)_{k \in \N}$ in $\Kxx$.

    The base step is to set $q_i^{(0)} := 0$ for any $i \in \{\DOTS{1}{r}\}$.
    Now, suppose that we have inductively constructed the sequences up to a rank
    $k \in \N$ such that they satisfy the following condition:
    \[
        f = h_k + \sum_{i = 1}^{r} q_i^{(k)} s_i.
    \]
    If $h_k = h_{k+1}$, set $q_i^{(k+1)} := q_i^{(k)}$ for all $i \in
    \{\DOTS{1}{r}\}$. Otherwise, since $h_k \reflRew h_{k+1}$, we necessarily
    have $h_k \rew h_{k+1}$, which means, by definition, there exist
    $M_k \in \supp{h_k}$, an index $i_k \in \{\DOTS{1}{r}\}$ and a monomial $m_k
    \in [\x]$ such that $M_k = m_k \cdot \LM{\op{<}}{s_{i_k}}$ and:
    \[
        h_{k+1} = h_k - \frac{\coeff{h_k}{M_k}}{\LC{\op{<}}{s_{i_k}}} m_k
        s_{i_k}.
    \]
    We then set $q_i^{(k+1)} := q_i^{(k)}$ for all $i \in \{\DOTS{1}{r}\}
    \setminus \{i_k\}$ and:
    \[
        q_{i_k}^{(k+1)} := q_{i_k}^{(k)} +
        \frac{\coeff{h_k}{M_k}}{\LC{\op{<}}{s_{i_k}}} m_k.
    \]
    Note how we then have:
    \begin{align*}
        h_{k+1} + \sum_{i = 1}^{r} q_i^{(k+1)} s_i
            &= h_{k+1} + \left(q_{i_k}^{(k)} +
            \frac{\coeff{h_k}{M_k}}{\LC{\op{<}}{s_{i_k}}} m_k\right)s_{i_k} +
            \sum_{\substack{i = 1 \\ i \neq i_k}}^{r} q_i^{(k)} s_i \\
            &= \left(h_{k+1} + \frac{\coeff{h_k}{M_k}}{\LC{\op{<}}{s_{i_k}}} m_k
            s_{i_k}\right) + \sum_{i = 1}^{r} q_i^{(k)} s_i \\
            &= h_k + \sum_{i = 1}^{r} q_i^{(k)} s_i \\
            &= f.
    \end{align*}
    Hence the induction hypothesis is verified for $k+1$ and we can continue.
    Repeating this process \textit{ad infinitum} we obtain $r$ sequences
    $\left(q_i^{(k)}\right)_{k \in \N}$ which turn out to be Cauchy. Indeed, we
    notice that, by compatibility with the degree, the sequence $(\deg(m_k))_{k
    \in \N}$ is always eventually bigger than any positive integer because so is
    the sequence $(\deg(M_k))_{k \in \N}$ of reduced monomials in the Cauchy
    rewriting sequence $(h_k)_{k \in \N}$. Since $\Kxx$ is a complete metric
    space, there exists, for each $i \in \{\DOTS{1}{r}\}$, $q_i^{(\infty)} \in
    \Kxx$ such that $\lim_{k \to \infty} q_i^{(k)} = q_i^{(\infty)}$. The
    induction hypothesis allows us to write:
    \[
        f = \lim_{k \to \infty} \left(h_k + \sum_{i = 1}^{r} q_i^{(k)}
        s_i\right) = 0 + \sum_{i = 1}^{r} q_i^{(\infty)} s_i.
    \]
    Because $f$ rewrites into $0$ via chains, there exists a unique $k \in \N$
    such that $M_k = \LM{\op{<}}{f}$. It follows that $q_{i_k}^{(\infty)} \neq
    0$ and $\LM{\op{<}}{q_{i_k}^{(\infty)} s_{i_k}} = M_k = \LM{\op{<}}{f}$.
    Hence, the $q_i^{(\infty)}$'s do exhibit a standard representation for $f$
    with respect to $R$ and $<$.
\end{proof}

\subsection{The congruence relation modulo the ideal and the characterisations
of standard bases}\label{ssct:cong-ideal}

In this subsection, we will show that the congruence relation modulo the ideal
$I(R)$ is exactly the equivalence relation generated by $\topChainsRew$, when
the order $<$ is assumed to be compatible with the degree. Under this
assumption, we will also prove that this latter equivalence relation is the same
as the equivalence relation generated by $\topRew$.

Let $\equiv_{I(R)}$ be the congruence relation modulo the ideal $I(R)$,
that is to say, we have $f \equiv_{I(R)} g$ if and only if $f - g \in I(R)$.
Denote by $\topChainsEquivRew$ (resp.\ $\topEquivRew$) the equivalence relation
generated by $\topChainsRew$ (resp.\ $\topRew$): it is the transitive closure of
the symmetric closure, denoted $\topChainsSymRew$ (resp.\ $\topSymRew$), of the
relation $\topChainsRew$ (resp.\ $\topRew$).

We are first going to prove a few lemmas.

\begin{lemma}[{\cite[Proposition 4.2.2]{Chenavier_2025}}]\label{lem:f-rew-g-diff-in-ideal}
    For all $f, g \in \Kxx$, if $f \topRew g$, then $f \equiv_{I(R)} g$.
\end{lemma}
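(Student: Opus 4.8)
The plan is to reduce this statement about the topological rewriting relation to the corresponding (easy) statement about the finite relation $\transReflRew$, and then pass to a limit using the fact, recorded at the end of Subsection~\ref{ssct:formal-power-series}, that every ideal of $\Kxx$ is topologically closed for $\delta$. The sequential characterisation of $\topRew$ given in Lemma~\ref{lem:charac-top-rew-fps} is exactly the bridge between the finitary and the topological relations, so the whole argument is: one step preserves the congruence class, hence so does any finite sequence, hence so does any limit of finite sequences.

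First I would treat a single rewriting step. If $f \rew g$, then by definition there are $M \in \supp{f}$, an index $i \in \{\DOTS{1}{r}\}$ and a monomial $m \in [\x]$ with $M = m \cdot \LM{\op{<}}{s_i}$ and $g = f - \frac{\coeff{f}{M}}{\LC{\op{<}}{s_i}} m s_i$. Thus $f - g = \frac{\coeff{f}{M}}{\LC{\op{<}}{s_i}} m s_i$ is a $\Kxx$-multiple of $s_i \in R$, hence lies in $I(R)$. By a telescoping argument, this extends immediately to finite sequences: if $f \transReflRew g$, decompose it as $f = u_0 \rew u_1 \rew \cdots \rew u_p = g$, so that $f - g = \sum_{j=1}^{p} (u_{j-1} - u_j)$ is a finite sum of elements of $I(R)$ and therefore itself lies in $I(R)$. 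In particular $f \transReflRew g$ already implies $f \equiv_{I(R)} g$.

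Next I would upgrade this to $\topRew$. By Lemma~\ref{lem:charac-top-rew-fps}, the hypothesis $f \topRew g$ furnishes a sequence $(h_k)_{k \in \N}$ in $\Kxx$ with $f \transReflRew h_k$ for every $k \in \N$ and $\lim_{k \to \infty} h_k = g$. By the previous paragraph, $f - h_k \in I(R)$ for all $k$. Since subtraction is continuous for $\delta$, the sequence $(f - h_k)_{k \in \N}$ converges to $f - g$. Because $I(R)$ is an ideal of $\Kxx$, it is topologically closed for $\tau_\delta$, so being the $\delta$-limit of a sequence of its elements forces $f - g \in I(R)$; that is, $f \equiv_{I(R)} g$, as desired.

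There is essentially no serious obstacle here: the only non-elementary ingredient is the topological closedness of $I(R)$, which is precisely the consequence of Proposition~\ref{prop:local-ring-is-zariski} noted at the end of Subsection~\ref{ssct:formal-power-series} (the maximal ideal $I(\x)$ defining the topology is contained in the Jacobson radical of the local ring $\Kxx$). Everything else is the telescoping identity together with continuity of the ring operations and the first-countability of $(\Kxx, \delta)$ that underlies the sequential form of $\topRew$.
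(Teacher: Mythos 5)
Your proof is correct and follows exactly the route the paper has in mind: the paper defers the proof to the cited reference but explicitly remarks that the lemma holds ``only because'' ideals of $\Kxx$ are topologically closed for $\tau_\delta$, which is precisely the ingredient your limit argument isolates. The one-step computation, the telescoping over $\transReflRew$, and the passage to the limit via Lemma~\ref{lem:charac-top-rew-fps} and the closedness of $I(R)$ are all the right (and routine) details.
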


This lemma is true only because, as we mentioned before, ideals of (commutative)
formal power series are topologically closed for the topology $\tau_\delta$.

\begin{lemma}\label{lem:rewrite-rule-to-zero}
    Assume the order $<$ is compatible with the degree. Then, for all $q \in
    \Kxx$ and all $i \in \{\DOTS{1}{r}\}$, we have $qs_i \topChainsRew 0$.
\end{lemma}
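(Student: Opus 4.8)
The plan is to exhibit an explicit chain of one-step reductions from $q s_i$ that converges to $0$, obtained by repeatedly stripping the leading term of the cofactor $q$. Write $L := \LM{\op{<}}{s_i}$ and $c := \LC{\op{<}}{s_i}$. First I would dispose of the trivial case $q = 0$, where the constant sequence equal to $0$ already witnesses $0 \topChainsRew 0$. Assuming $q \neq 0$, I set $q_0 := q$ and, recursively, $q_{k+1} := q_k - \LC{\op{<}}{q_k}\LM{\op{<}}{q_k}$ as long as $q_k \neq 0$ (and $q_{k+1} := 0$ once $q_k = 0$), so that $q_k$ is $q$ with its $k$ smallest monomials for $<$ deleted. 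The candidate chain is then $h_k := q_k s_i$, with $h_0 = q s_i$.

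The key computational step is to verify that $h_k \rew h_{k+1}$ for every $k$ (with $h_k = h_{k+1}$ once $q_k = 0$, absorbed by the reflexive closure $\reflRew$). This rests on the multiplicativity of the leading monomial: since $<$ is a monomial order and $\K$ is a field, for nonzero $f, g \in \Kxx$ one has $\LM{\op{<}}{fg} = \LM{\op{<}}{f}\cdot\LM{\op{<}}{g}$ together with $\coeff{fg}{\LM{\op{<}}{fg}} = \LC{\op{<}}{f}\LC{\op{<}}{g}$. This I would justify from translation invariance of the order and cancellation in the free commutative monoid $[\x]$: the product of the minimal monomials is a lower bound for $\supp{fg}$, and it is the only factorisation of $\LM{\op{<}}{f}\LM{\op{<}}{g}$ into a monomial of $\supp f$ and one of $\supp g$, so its coefficient is $\LC{\op{<}}{f}\LC{\op{<}}{g} \neq 0$. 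Consequently $M_k := \LM{\op{<}}{h_k} = \LM{\op{<}}{q_k}\cdot L$ lies in $\supp{h_k}$ and is reducible by $s_i$ through the multiplier $m_k := \LM{\op{<}}{q_k}$, and reducing $h_k$ at $M_k$ gives exactly
\[
    h_k - \frac{\coeff{h_k}{M_k}}{c}\, m_k s_i = \big(q_k - \LC{\op{<}}{q_k}\LM{\op{<}}{q_k}\big)\, s_i = q_{k+1} s_i = h_{k+1}.
\]

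It then remains to show $\lim_{k \to \infty} h_k = 0$. For this I would first argue $q_k \to 0$: given $D \in \N$, compatibility of $<$ with the degree together with the finiteness of the set of monomials of each degree (only $n$ indeterminates) ensures that $\supp q$ contains finitely many monomials of degree at most $D$; since the $q_k$ delete monomials in increasing order for $<$, for $k$ large every monomial of $\supp{q_k}$ has degree exceeding $D$, whence $\delta(q_k, 0) < 2^{-D}$ by Proposition~\ref{prop:deg-lm-val} (equivalently by~(\ref{eq:delta})). Thus $q_k \to 0$, and because $\Kxx$ is a topological ring its multiplication is continuous, so $h_k = q_k s_i \to 0\cdot s_i = 0$. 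Combining the three ingredients, $(h_k)_{k \in \N}$ satisfies $h_0 = q s_i$, $h_k \reflRew h_{k+1}$, and $\lim_k h_k = 0$, which is precisely $q s_i \topChainsRew 0$.

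The only genuinely delicate point is the multiplicativity of the leading monomial and the attendant coefficient identity; it is exactly what guarantees that each reduction strips off one entire leading term of $q$ rather than cancelling it only partially, and it is where $\K$ being a field (so that $\Kxx$ is an integral domain) enters. Everything else—the existence of the deletion order, the degrees tending to infinity, and the passage to the limit—is routine bookkeeping built on Proposition~\ref{prop:deg-lm-val} and the continuity of multiplication.
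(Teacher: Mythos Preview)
Your proof is correct and follows essentially the same approach as the paper: both repeatedly reduce the leading monomial of $h_k$, which amounts to stripping off the leading term of the remaining cofactor $q_k$, and then argue the resulting chain tends to $0$. The only cosmetic difference is that you make the factorisation $h_k = q_k s_i$ explicit and invoke continuity of multiplication for the limit, whereas the paper appeals to Lemma~\ref{lem:cauchy-rewriting-seq} and identifies the limit as $0$ via the degrees of the reduced monomials.
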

\begin{proof}
    Because the order is compatible with the degree and thus of order type
    $\omega$, we can write the support of $q$ as a (potentially infinite)
    sequence $(m_k)_{k \in \N}$ of monomials that is stricly increasing for $<$.
    If that sequence is finite, then it is clear that, after rewriting each
    monomial of the form $M_k := m_k \cdot \LM{\op{<}}{s_i} \in \supp{qs_i}$
    using the rule of index $i$, everything will cancel out and we would have $q
    s_i \transReflRew 0$. It then suffices to decompose that relation in
    one-step reduction relations and complete to infinity with zeros to end up
    with a sequence which satisfies $qs_i \topChainsRew 0$. Otherwise, if the
    support is infinite, we define $h_0 = qs_i$ and, for each $k \in \N$:
    \[
        h_{k+1} := h_k - \frac{\LC{\op{<}}{h_k}}{\LC{\op{<}}{s_i}} m_k s_i.
    \]
    It is straightforward to see that, for every $k \in \N$, we have $h_k \rew
    h_{k+1}$, because we rewrite the leading monomial of $h_k$, which is of the
    form $M_k := m_k \cdot \LM{\op{<}}{s_i}$, with rule of index $i$. Hence, by
    Lemma~\ref{lem:cauchy-rewriting-seq}, the sequence $(h_k)_{k \in \N}$
    converges. Moreover, it is clear that its limit is $0$ because, the sequence
    $(m_k)_{k \in \N}$ being stricly increasing for $<$ and the order $<$ being
    assumed to be compatible with the degree, it follows that the sequence
    $(\deg(M_k))_{k \in \N}$ is eventually bigger than any positive integer and
    the sequence $(M_k)_{k \in \N}$ is exactly the sequence of leading monomials
    of the sequence $(h_k)_{k \in \N}$.
\end{proof}

\begin{lemma}[Translation lemma]\label{lem:translation-lemma}
    Assume the order $<$ is compatible with the degree. For all formal power
    series $f, g, h \in \Kxx$, if $f - g \topChainsRew h$, then there exist
    $f', g' \in \Kxx$ such that $h = f' - g'$, $f \topChainsRew f'$ and $g
    \topChainsRew g'$.
\end{lemma}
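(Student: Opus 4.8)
The plan is to lift the chain that witnesses $f - g \topChainsRew h$ to a pair of parallel chains, one issuing from $f$ and one from $g$, whose termwise difference reproduces the original chain at every stage. First I would unfold the definition of $\topChainsRew$ to obtain a sequence $(p_k)_{k \in \N}$ in $\Kxx$ with $p_0 = f - g$, with $p_k \reflRew p_{k+1}$ for all $k$, and with $\lim_{k \to \infty} p_k = h$. The goal is then to construct sequences $(f_k)_{k \in \N}$ and $(g_k)_{k \in \N}$ satisfying $f_0 = f$, $g_0 = g$, $f_k \reflRew f_{k+1}$, $g_k \reflRew g_{k+1}$, and the invariant $p_k = f_k - g_k$ for all $k$.

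I would build these two sequences by induction on $k$, keeping the invariant $p_k = f_k - g_k$. If $p_k = p_{k+1}$, I set $f_{k+1} := f_k$ and $g_{k+1} := g_k$. Otherwise $p_k \rew p_{k+1}$ is a genuine one-step reduction, so there exist a reducible monomial $M_k = m_k \cdot \LM{\op{<}}{s_{i_k}} \in \supp{p_k}$, an index $i_k \in \{\DOTS{1}{r}\}$ and a monomial $m_k$ with $p_{k+1} = p_k - \frac{\coeff{p_k}{M_k}}{\LC{\op{<}}{s_{i_k}}} m_k s_{i_k}$; I then set $f_{k+1} := f_k - \frac{\coeff{f_k}{M_k}}{\LC{\op{<}}{s_{i_k}}} m_k s_{i_k}$ and $g_{k+1} := g_k - \frac{\coeff{g_k}{M_k}}{\LC{\op{<}}{s_{i_k}}} m_k s_{i_k}$. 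The crux is the bookkeeping identity $\coeff{p_k}{M_k} = \coeff{f_k}{M_k} - \coeff{g_k}{M_k}$, which follows at once from the invariant and guarantees both that $f_{k+1} - g_{k+1} = p_{k+1}$ (so the invariant persists) and, using the same monomial $m_k$ and rule $i_k$, that each of $f_k \reflRew f_{k+1}$ and $g_k \reflRew g_{k+1}$ holds.

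To finish, I would invoke Lemma~\ref{lem:cauchy-rewriting-seq} together with Remark~\ref{rmk:cauchy-rewriting-sequences}: because $(f_k)$ and $(g_k)$ are chains of $\reflRew$ steps and $<$ is compatible with the degree, both converge in $\Kxx$, say $f' := \lim_k f_k$ and $g' := \lim_k g_k$. By construction $f \topChainsRew f'$ and $g \topChainsRew g'$, while continuity of subtraction in the topological ring $\Kxx$ gives $h = \lim_k p_k = \lim_k (f_k - g_k) = f' - g'$, as required.

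The one point demanding care—and the main obstacle—is verifying that each prescribed step is a legitimate $\reflRew$ step rather than an illegal base rewriting. When $\coeff{f_k}{M_k} = 0$ the prescribed update of $f_k$ degenerates to $f_{k+1} = f_k$, which must be read as the reflexive step, not a $\rew$ step (and symmetrically for $g_k$); conversely, when $\coeff{f_k}{M_k} \neq 0$ the monomial $M_k$ lies in $\supp{f_k}$ and is reducible there, so $f_k \rew f_{k+1}$ is a bona fide reduction. Since $M_k \in \supp{p_k}$ forces $\coeff{p_k}{M_k} \neq 0$, at least one of the two coefficients is nonzero, so at least one side performs a genuine reduction; the arithmetic of the previous paragraph then shows the two sides jointly reconstruct $p_{k+1}$. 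Everything else is routine.
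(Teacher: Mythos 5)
Your proposal is correct and follows essentially the same route as the paper's own proof: unfold the chain for $f-g$, build the two parallel sequences by applying the same rule $s_{i_k}$ at the same monomial $M_k$ with the respective coefficients (degenerating to a reflexive step when the coefficient vanishes), and conclude by Lemma~\ref{lem:cauchy-rewriting-seq} and passing to the limit. The point you flag as delicate --- that each prescribed step is a legitimate $\reflRew$ step --- is exactly the observation the paper makes as well.
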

\begin{proof}
    By definition, we have a sequence $(h_k)_{k \in \N}$ such that $h_0 = f -
    g$, $\lim_{k \to \infty} h_k = h$ and, for all $k \in \N$, $h_k \reflRew
    h_{k+1}$. Construct inductively two sequences $(f_k)_{k \in \N}$ and
    $(g_k)_{k \in \N}$. First, set $f_0 := f$ and $g_0 := g$. Assume the
    sequences are constructed up to a rank $k \in \N$ such that $h_k = f_k -
    g_k$. If we have $h_k = h_{k+1}$, we set $f_{k+1} := f_k$ and $g_{k+1} :=
    g_k$. Otherwise, if $h_k \neq h_{k+1}$, we necessarily have $h_k \rew
    h_{k+1}$, which means by definition, that there exist $M_k \in \supp{h_k}$,
    an index $i_k \in \{\DOTS{1}{r}\}$ and a monomial $m_k \in [\x]$ such that
    $M_k = m_k \cdot \LM{\op{<}}{s_{i_k}}$ and:
    \[
        h_{k+1} = h_k - \frac{\coeff{h_k}{M_k}}{\LC{\op{<}}{s_{i_k}}}m_ks_{i_k}.
    \]
    We then set:
    \[
        f_{k+1} := f_k - \frac{\coeff{f_k}{M_k}}{\LC{\op{<}}{s_{i_k}}}m_ks_{i_k}
        \qquad\text{and}\qquad
        g_{k+1}:= g_k - \frac{\coeff{g_k}{M_k}}{\LC{\op{<}}{s_{i_k}}}m_ks_{i_k}.
    \]
    Note how, then, by induction, we have:
    \begin{align*}
        h_{k+1}
            &= h_k - \frac{\coeff{h_k}{M_k}}{\LC{\op{<}}{s_{i_k}}}m_ks_{i_k} \\
            &= (f_k - g_k) - \frac{\coeff{f_k -
            g_k}{M_k}}{\LC{\op{<}}{s_{i_k}}}m_ks_{i_k} \\
            &= \left(f_k -
                \frac{\coeff{f_k}{M_k}}{\LC{\op{<}}{s_{i_k}}}m_ks_{i_k}\right)
            - \left(g_k -
                \frac{\coeff{g_k}{M_k}}{\LC{\op{<}}{s_{i_k}}}m_ks_{i_k}\right) \\
            &= f_{k+1} - g_{k+1}.
    \end{align*}
    If $M_k \notin \supp{f_k}$, then $f_k = f_{k+1}$; otherwise, $f_k \rew
    f_{k+1}$ by definition. Hence, $f_k \reflRew f_{k+1}$, for any $k \in \N$.
    Same reasoning applies to the sequence $(g_k)_{k \in \N}$. Since the order
    is compatible with the degree, we can use
    Lemma~\ref{lem:cauchy-rewriting-seq} to conclude that the sequences
    $(f_k)_{k \in \N}$ and $(g_k)_{k \in \N}$ have limits; denote them by $f'$ and
    $g'$ respectively. It is then clear that $f \topChainsRew f'$ and $g
    \topChainsRew g'$. Finally, we conclude with:
    \[
        h = \lim_{k \to \infty} h_k = \lim_{k \to \infty} (f_k - g_k) = \lim_{k
        \to \infty} f_k - \lim_{k \to \infty} g_k = f' - g'.
    \]
\end{proof}

We will actually require only a particular case of that lemma that we state
thereafter without the straightforward proof.
\begin{corollary}\label{cor:translation-lemma}
    Assume the order $<$ is compatible with the degree. Let $f, g \in \Kxx$. If
    we have $f - g \topChainsRew 0$, then there exists $h \in \Kxx$ such that $f
    \topChainsRew h \topChainsInvRew g$.
\end{corollary}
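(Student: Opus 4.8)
The plan is to derive this as an immediate specialisation of the Translation Lemma (Lemma~\ref{lem:translation-lemma}), so essentially no new work is required. First I would invoke that lemma on the hypothesis $f - g \topChainsRew 0$, taking the target formal power series to be $h = 0$. The degree-compatibility assumption is exactly the hypothesis under which the Translation Lemma holds, so this application is legitimate; it produces two formal power series $f', g' \in \Kxx$ together with the data $0 = f' - g'$, $f \topChainsRew f'$ and $g \topChainsRew g'$.

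The only observation left is that the equation $f' - g' = 0$ forces $f' = g'$. I would then set $h := f' = g'$, so that the two reductions $f \topChainsRew f'$ and $g \topChainsRew g'$ read off directly as $f \topChainsRew h$ and $g \topChainsRew h$; this is precisely the asserted configuration $f \topChainsRew h \topChainsInvRew g$, where the second arrow encodes $g \topChainsRew h$.

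I do not expect any genuine obstacle: the entire substance of the statement has already been packaged into the Translation Lemma, and the corollary amounts to nothing more than pinning its target to $0$ and reading off the forced equality $f' = g'$ of the two resulting reducts. In particular, the degree-compatibility hypothesis is needed here solely to license the appeal to Lemma~\ref{lem:translation-lemma}, and plays no further role in the argument.
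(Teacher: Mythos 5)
Your proposal is correct and is exactly the ``straightforward proof'' the paper omits: specialise Lemma~\ref{lem:translation-lemma} to the target $0$, observe that $f' - g' = 0$ forces $f' = g'$, and take $h$ to be this common value. Nothing more is needed.
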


\begin{lemma}\label{lem:cong-subrel-equivrew}
    Assume the order $<$ is compatible with the degree. Then $\equiv_{I(R)}$ is
    a subrelation of $\topChainsEquivRew$.
\end{lemma}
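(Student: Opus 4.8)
The plan is to reduce an arbitrary congruence $f \equiv_{I(R)} g$ to a finite zigzag of $\topChainsRew$-steps, exploiting that $I(R)$ is generated by the \emph{finite} set $R$. First I would unwind the hypothesis: $f \equiv_{I(R)} g$ means $f - g \in I(R)$, and since $I(R)$ is the ideal generated by $R = \{\DOTS{s_1}{s_r}\}$, every element of $I(R)$ is a finite $\Kxx$-combination of the generators. Hence I may write $f - g = \sum_{i=1}^{r} q_i s_i$ for some $q_1, \dots, q_r \in \Kxx$. The idea is then to interpolate between $g$ and $f$ by adding one term $q_i s_i$ at a time.

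Concretely, I would introduce the partial sums $p_k := g + \sum_{i=1}^{k} q_i s_i$ for $k \in \{\DOTS{0}{r}\}$, so that $p_0 = g$ and $p_r = f$, and for each $k \in \{\DOTS{1}{r}\}$ consecutive terms differ by a single rewrite rule: $p_{k-1} - p_k = -q_k s_k = (-q_k)s_k$. This is exactly the configuration addressed by the preceding lemmas, which is what makes the argument go through.

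The central step combines Lemma~\ref{lem:rewrite-rule-to-zero} with Corollary~\ref{cor:translation-lemma}. Using that $<$ is compatible with the degree, Lemma~\ref{lem:rewrite-rule-to-zero} gives $(-q_k)s_k \topChainsRew 0$, that is, $p_{k-1} - p_k \topChainsRew 0$. Applying the translation corollary to the pair $(p_{k-1}, p_k)$ then produces an element $h_k \in \Kxx$ with $p_{k-1} \topChainsRew h_k$ and $p_k \topChainsRew h_k$. Reading this as $p_{k-1} \topChainsSymRew h_k \topChainsSymRew p_k$ yields $p_{k-1} \topChainsEquivRew p_k$, since $\topChainsEquivRew$ is by definition the transitive closure of the symmetric closure of $\topChainsRew$.

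Finally I would conclude by transitivity of the equivalence relation: chaining $g = p_0 \topChainsEquivRew p_1 \topChainsEquivRew \cdots \topChainsEquivRew p_r = f$ gives $f \topChainsEquivRew g$, which is the claim. I do not anticipate a genuine obstacle here; the only two points demanding care are that the finiteness of $R$ legitimately yields the finite expansion $f - g = \sum_{i=1}^{r} q_i s_i$ (so the interpolation terminates after $r$ stages), and that the translation corollary is invoked with the correct orientation, so that $p_{k-1}$ and $p_k$ both rewrite into the \emph{common} successor $h_k$ rather than directly into one another.
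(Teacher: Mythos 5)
Your proof is correct, and it rests on exactly the same two ingredients as the paper's: Lemma~\ref{lem:rewrite-rule-to-zero} to send a single term $(-q_k)s_k$ to $0$, and Corollary~\ref{cor:translation-lemma} to convert that into a common $\topChainsRew$-successor of two consecutive partial sums. The difference is organisational. The paper phrases the one-generator-at-a-time interpolation as an induction on the number of rules: in the inductive step it adjoins a fresh rule $s \notin R$, works with the enlarged relation $\topChainsRew[R_s]$ attached to $R \cup \{s\}$, and therefore needs the preliminary observation that the chains relation is monotone in the rule set in order to transport the induction hypothesis into the bigger system; the conclusion is then stated for the enlarged system. Your telescoping chain $g = p_0, p_1, \dots, p_r = f$ stays inside the fixed system $\sys$ throughout --- each step $p_{k-1} \topChainsEquivRew p_k$ only invokes the rule $s_k$, which already belongs to $R$, so Lemma~\ref{lem:rewrite-rule-to-zero} and Corollary~\ref{cor:translation-lemma} apply verbatim as stated. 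This removes the auxiliary systems and the monotonicity remark entirely, at no cost; it is a mild but genuine simplification of the same underlying argument. Your two flagged points of care are indeed the right ones, and both check out: $p_{k-1} - p_k = (-q_k)s_k$ is a single-rule multiple, and the translation corollary is applied with the orientation that produces a common successor $h_k$ of $p_{k-1}$ and $p_k$ rather than a direct reduction between them.
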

\begin{proof}
    First notice how for any subset $R'$ such that $R \subseteq R' \subseteq \Kxx \setminus
    \{0\}$, the relation $\topChainsRew$ of the system $\sys$ is a subrelation
    of the topological rewriting with chains relation with respect to $R'$ and
    $<$. Now, to get the proposition, it suffices to prove that for all $r \in
    \N$, if there exist $(\DOTS{q_1}{q_r}) \in \Kxx^r$ such that $f - g =
    \sum_{i = 1}^{r} q_i s_i$, then we have $f \topChainsEquivRew g$. Let us now
    prove this by induction on the number $r$ of rules in $R$. The base step is
    when $r = 0$ which means that $f = g$, and since $\topChainsEquivRew$ is an
    equivalence relation it is in particular reflexive; the result follows. By
    induction, let $r \in \N$ satisfy the induction hypothesis and
    consider $s \in \Kxx \setminus (R \cup \{0\})$, $f, g \in \Kxx$ and
    $(\DOTS{q, q_1}{q_{r}}) \in \Kxx^{r+1}$ such that:
    \[
        g - f = q s + \sum_{i = 1}^{r} q_i s_i.
    \]
    Let $\topChainsRew[R_s]$ be the topological rewriting with chains relation
    with respect to $R \cup \{s\}$ and $<$ and denote by
    $\topChainsEquivRew[R_s]$ the equivalence relation generated by it. By
    Lemma~\ref{lem:rewrite-rule-to-zero}, we have $qs \topChainsRew[R_s] 0$.
    Applying Corollary~\ref{cor:translation-lemma} to $f := f$ and $g := f +
    qs$, we obtain a formal power series $h \in \Kxx$ such that
    $f \topChainsRew[R_s] h$ and $f + qs \topChainsRew[R_s] h$. It follows that
    $f \topChainsEquivRew[R_s] f + qs$. Now, notice that, by definition, we have
    $g - (f + qs) = \sum_{i = 1}^{r} q_is_i$. Apply the induction hypothesis to
    obtain $g \topChainsEquivRew f + qs$. But, $\topChainsRew$ is a subrelation
    of $\topChainsRew[R_s]$ (and hence, $\topChainsEquivRew$ is also a
    subrelation of $\topChainsEquivRew[R_s]$), so $g \topChainsEquivRew[R_s] f +
    qs \topChainsEquivRew[R_s] f$, therefore, $f \topChainsEquivRew[R_s] g$,
    which is the desired result.
\end{proof}

We can finally prove that the congruence relation is characterised by the
equivalence relations generated by the topological rewriting relations.
\begin{theorem}\label{thm:decide-ideal}
    Assume the order $<$ is compatible with the degree. Then $\equiv_{I(R)}$,
    $\topEquivRew$ and $\topChainsEquivRew$ are equal.
\end{theorem}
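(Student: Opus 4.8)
The plan is to prove the three relations coincide by closing a cycle of inclusions, namely
\[
    \equiv_{I(R)} \;\subseteq\; \topChainsEquivRew \;\subseteq\; \topEquivRew \;\subseteq\; \equiv_{I(R)},
\]
after which equality of all three follows at once. Each inclusion is already essentially established in the preceding material, so the task reduces to assembling the pieces in the correct order rather than to any fresh computation.

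First I would record the leftmost inclusion $\equiv_{I(R)} \subseteq \topChainsEquivRew$: this is precisely Lemma~\ref{lem:cong-subrel-equivrew}, valid under our standing hypothesis that $<$ is compatible with the degree. Next, for the middle inclusion, I would appeal to the observation made just after the definition of the topological rewriting with chains relation, namely that $\topChainsRew$ is a subrelation of $\topRew$. Since passing from a relation to the equivalence relation it generates (the transitive closure of the symmetric closure) is monotone with respect to inclusion of relations, the equivalence relation generated by $\topChainsRew$ embeds into the one generated by $\topRew$, giving $\topChainsEquivRew \subseteq \topEquivRew$.

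Finally, for the rightmost inclusion $\topEquivRew \subseteq \equiv_{I(R)}$, I would invoke Lemma~\ref{lem:f-rew-g-diff-in-ideal}, which asserts that $f \topRew g$ implies $f \equiv_{I(R)} g$, \ie{} that $\topRew$ is itself a subrelation of $\equiv_{I(R)}$. Here the decisive point is that the target $\equiv_{I(R)}$ is \emph{already} an equivalence relation: being symmetric it contains the symmetric closure of $\topRew$, and being transitive it then contains the transitive closure of that symmetric closure, which is exactly $\topEquivRew$. Chaining the three inclusions yields $\equiv_{I(R)} = \topEquivRew = \topChainsEquivRew$, as desired. I do not expect a genuine obstacle, since all the substantive work lies in the earlier lemmas; the only step demanding a little care is this last one, where the inclusion follows from $\equiv_{I(R)}$ being an equivalence relation and not merely from monotonicity of the closure operation.
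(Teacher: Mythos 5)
Your proposal is correct and follows essentially the same route as the paper: the identical cycle of inclusions, with Lemma~\ref{lem:cong-subrel-equivrew} for the first, monotonicity of the generated equivalence relation for the second, and Lemma~\ref{lem:f-rew-g-diff-in-ideal} for the third. The only cosmetic difference is that the paper carries out the last inclusion by an explicit telescoping sum over a finite $\topSymRew$-chain, whereas you phrase it abstractly via $\equiv_{I(R)}$ being an equivalence relation containing $\topRew$; these are the same argument.
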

\begin{proof}
    By Lemma~\ref{lem:cong-subrel-equivrew}, $\equiv_{I(R)}$ is a
    subrelation of $\topChainsEquivRew$. Now, since $\topChainsRew$ is a
    subrelation of $\topRew$, it follows that $\topChainsEquivRew$ is also a
    subrelation of $\topEquivRew$. Finally, let us show that $\topEquivRew$ is a
    subrelation of $\equiv_{I(R)}$, which will prove the theorem. Let $f, g \in
    \Kxx$ such that $f \topEquivRew g$. Decompose this into a finite sequence:
    \[
        f \topSymRew h_1 \topSymRew h_2 \topSymRew \cdots \topSymRew h_\ell
        \topSymRew g.
    \]
    Using Lemma~\ref{lem:f-rew-g-diff-in-ideal}, it follows that, for any
    $h, h' \in \Kxx$,  $h \topSymRew h'$ implies $h - h' \in I(R)$. Thus, by
    induction on the finite sequence above, we have:
    \[
        f - g = (f - h_1) + (h_1 - h_2) + \cdots + (h_\ell - g) \in I(R).
    \]
    From which we deduce $f \equiv_{I(R)} g$.
\end{proof}

The following theorem is where the main contributions of this paper can be
found.

\begin{theorem}\label{thm:charac-standard-bases}
    Assume the order $<$ is compatible with the degree. Then the following
    statements are equivalent:
    \begin{enumerate}
        \item\label{sta:fin-topo-conf}
            The system $\sys$ is finitary topologically confluent.
        \item\label{sta:topRew-zero}
            For all $f \in I(R)$, we have $f \topRew 0$.
        \item\label{sta:reducible}
            For all $f \in I(R) \setminus \{0\}$, $f$ is reducible (\ie{} not a
            normal form).
        \item\label{sta:standard-basis}
            The set $R$ is a standard basis of $I(R)$ for $<$.
        \item\label{sta:topChainsRew-zero}
            For all $f \in I(R)$, we have $f \topChainsRew 0$.
        \item\label{sta:standard-rep}
            For all $f \in I(R) \setminus \{0\}$, $f$ admits a standard
            representation with respect to $R$ and $<$.
        \item\label{sta:remainder}
            For all $f \in \Kxx$, there is a unique $\alpha \in \NF{\sys}$ such
            that $f = \alpha + \sum_{i = 1}^{r} q_i s_i$ for some
            $(\DOTS{q_1}{q_r}) \in \Kxx^r$.
        \item\label{sta:normal-forms}
            The map from $\NF{\sys}$ to the quotient $\Kxx/I(R)$ which
            associates to any normal form $\alpha \in \NF{\sys}$ the equivalence class
            $\alpha + I(R)$ is bijective.
        \item\label{sta:infty-topo-conf}
            The system $\sys$ is infinitary topologically confluent.
    \end{enumerate}
\end{theorem}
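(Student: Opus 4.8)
The plan is to prove the nine statements equivalent by exhibiting enough implications to make the implication graph strongly connected. I would take as backbone the cycle \ref{sta:fin-topo-conf}$\Rightarrow$\ref{sta:reducible}$\Rightarrow$\ref{sta:topChainsRew-zero}$\Rightarrow$\ref{sta:infty-topo-conf}$\Rightarrow$\ref{sta:fin-topo-conf} and then graft the remaining statements onto it. The only natural way to \emph{enter} the cycle from finitary confluence is through the machinery of Subsection~\ref{ssct:fin-top-conf-implies-unique-nf}: since the order is compatible with the degree, $\sys$ satisfies the three hypotheses of Theorem~\ref{thm:fin-top-conf-implies-unique-nf}, namely transitivity of $\topRew$ (Proposition~\ref{prop:topRew-transitive}), normalisation with respect to $\topChainsRew$ (Proposition~\ref{prop:topChainsTransRew-normalising}) and global attractivity of normal forms (Proposition~\ref{prop:Kxx-attractive-nf}). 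Hence statement~\ref{sta:fin-topo-conf} yields the unique normal form property with respect to $\topChainsRew$. To deduce statement~\ref{sta:reducible} I would argue by contradiction: a nonzero irreducible $f \in I(R)$ is a normal form with $f \equiv_{I(R)} 0$, so $f \topChainsEquivRew 0$ by Theorem~\ref{thm:decide-ideal}, and the unique normal form property forces $f = 0$.

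For \ref{sta:reducible}$\Rightarrow$\ref{sta:topChainsRew-zero}, given $f \in I(R)$ normalisation provides a normal form $\alpha$ with $f \topChainsRew \alpha$; since $\topChainsRew$ is a subrelation of $\topRew$, Lemma~\ref{lem:f-rew-g-diff-in-ideal} gives $\alpha \in I(R)$, and statement~\ref{sta:reducible} forbids nonzero irreducible elements of $I(R)$, so $\alpha = 0$. The crux is \ref{sta:topChainsRew-zero}$\Rightarrow$\ref{sta:infty-topo-conf}: given $a \topRew b$ and $a \topRew c$, Lemma~\ref{lem:f-rew-g-diff-in-ideal} yields $b - c \in I(R)$; normalising $b \topChainsRew \beta$ and $c \topChainsRew \gamma$ into normal forms gives $\beta - \gamma \in I(R)$, hence $\beta - \gamma \topChainsRew 0$ by hypothesis, and Corollary~\ref{cor:translation-lemma} produces $h$ with $\beta \topChainsRew h \topChainsInvRew \gamma$. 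As $\beta$ and $\gamma$ are normal forms of the $T_1$ (metric) space, Lemma~\ref{lem:charac-normal-forms} forces $h = \beta = \gamma =: d$, so $b \topRew d$ and $c \topRew d$. Finally \ref{sta:infty-topo-conf}$\Rightarrow$\ref{sta:fin-topo-conf} is exactly Remark~\ref{rmk:infty-implies-finitary}.

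It then remains to graft the other statements. Since $\topChainsRew$ is a subrelation of $\topRew$ we get \ref{sta:topChainsRew-zero}$\Rightarrow$\ref{sta:topRew-zero}, and \ref{sta:topRew-zero}$\Rightarrow$\ref{sta:reducible} because a normal form $f$ with $f \topRew 0$ satisfies $f = 0$ by Lemma~\ref{lem:charac-normal-forms}. For the algebraic statements, \ref{sta:topChainsRew-zero}$\Rightarrow$\ref{sta:standard-rep} is Proposition~\ref{prop:standard-rep-standard-basis}; both \ref{sta:standard-rep}$\Rightarrow$\ref{sta:standard-basis} and \ref{sta:standard-rep}$\Rightarrow$\ref{sta:reducible} follow from Remark~\ref{rmk:standard-rep-reducible}, since the leading monomial of a standard representation factors through some $\LM{\op{<}}{s_i}$; and \ref{sta:standard-basis}$\Rightarrow$\ref{sta:reducible} is immediate from Definition~\ref{def:standard-basis}. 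For the remainder and quotient statements, existence of a normal form representative always holds by normalisation, whereas uniqueness is supplied by statement~\ref{sta:topChainsRew-zero}: two normal forms congruent modulo $I(R)$ differ by an element of $I(R)$, which chain-rewrites to $0$, so Corollary~\ref{cor:translation-lemma} together with Lemma~\ref{lem:charac-normal-forms} forces them equal. This gives \ref{sta:topChainsRew-zero}$\Rightarrow$\ref{sta:normal-forms}$\Rightarrow$\ref{sta:remainder}, and \ref{sta:remainder}$\Rightarrow$\ref{sta:reducible} by applying the uniqueness in statement~\ref{sta:remainder} to the two representations $f = f + \sum_{i=1}^{r} 0\cdot s_i = 0 + \sum_{i=1}^{r} q_i s_i$ of a hypothetical nonzero irreducible $f \in I(R)$.

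The main obstacle is the implication \ref{sta:topChainsRew-zero}$\Rightarrow$\ref{sta:infty-topo-conf}, where finitary-style reachability of $0$ must be upgraded to genuine infinitary confluence: the content is precisely that ``chain-rewriting to $0$'' is what lets the translation lemma merge two independent topological reductions, and one must take care that $\topChainsRew$ itself (not merely its transitive-reflexive closure) reaches the common normal form, exploiting that normal forms are absorbing for $\topRew$ in a $T_1$ space. A secondary delicate point is simply getting \emph{into} the cycle from statement~\ref{sta:fin-topo-conf}, which genuinely relies on the general result of Theorem~\ref{thm:fin-top-conf-implies-unique-nf} and hence on verifying its three hypotheses for $\sys$.
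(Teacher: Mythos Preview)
Your proof is correct and uses the same toolbox as the paper (Theorem~\ref{thm:fin-top-conf-implies-unique-nf}, Proposition~\ref{prop:topChainsTransRew-normalising}, Lemma~\ref{lem:f-rew-g-diff-in-ideal}, Corollary~\ref{cor:translation-lemma}, Proposition~\ref{prop:standard-rep-standard-basis}), but the architecture is genuinely different. The paper proves a single linear cycle $\ref{sta:fin-topo-conf}\Rightarrow\ref{sta:topRew-zero}\Rightarrow\ref{sta:reducible}\Rightarrow\ref{sta:standard-basis}\Rightarrow\ref{sta:topChainsRew-zero}\Rightarrow\ref{sta:standard-rep}\Rightarrow\ref{sta:remainder}\Rightarrow\ref{sta:normal-forms}\Rightarrow\ref{sta:infty-topo-conf}\Rightarrow\ref{sta:fin-topo-conf}$, whereas you run a short backbone $\ref{sta:fin-topo-conf}\Rightarrow\ref{sta:reducible}\Rightarrow\ref{sta:topChainsRew-zero}\Rightarrow\ref{sta:infty-topo-conf}\Rightarrow\ref{sta:fin-topo-conf}$ and graft the remaining five statements onto it.

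Two substantive differences are worth noting. First, the paper passes through $\ref{sta:reducible}\Rightarrow\ref{sta:standard-basis}$, which needs the small but nontrivial observation that if $f\topChainsRew 0$ then the \emph{leading} monomial of $f$ must be reduced at some step of the chain; you bypass this entirely by going $\ref{sta:reducible}\Rightarrow\ref{sta:topChainsRew-zero}$ directly via normalisation, and recover $\ref{sta:standard-basis}$ afterwards through $\ref{sta:topChainsRew-zero}\Rightarrow\ref{sta:standard-rep}\Rightarrow\ref{sta:standard-basis}$. Second, the paper reaches infinitary confluence only at the end of the long chain via $\ref{sta:normal-forms}\Rightarrow\ref{sta:infty-topo-conf}$, using the bijection hypothesis to identify the two normal forms; you instead prove $\ref{sta:topChainsRew-zero}\Rightarrow\ref{sta:infty-topo-conf}$ directly, invoking the translation lemma (Corollary~\ref{cor:translation-lemma}) together with Lemma~\ref{lem:charac-normal-forms} to force the two normal forms equal. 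Your route is slightly more economical and keeps the rewriting-theoretic core (the backbone) separate from the algebraic characterisations (the grafts); the paper's single cycle has the expository advantage of threading through the classical standard-basis notions in their natural order.
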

\begin{proof}
    Most results use the compatibility with the degree assumption.

    \ref{sta:fin-topo-conf}. $\Rightarrow$ \ref{sta:topRew-zero}.: Assuming
    finitary topological confluence of the system and thanks to
    Propositions~\ref{prop:topRew-transitive},~\ref{prop:topChainsTransRew-normalising},
    and~\ref{prop:Kxx-attractive-nf}, we can use
    Corollary~\ref{cor:fin-top-conf-implies-nf-prop} of
    Theorem~\ref{thm:fin-top-conf-implies-unique-nf} to deduce that the normal
    form property with respect to $\topRew$ is verified. Now, let $f \in I(R)$.
    We thus have $f \equiv_{I(R)} 0$. By Theorem~\ref{thm:decide-ideal}, we have
    $f \topEquivRew 0$. By the normal form property, since $0$ is obviously a
    normal form of the system, we have $f \topRew 0$ since $\topRew$ is
    transitive.

    \ref{sta:topRew-zero}. $\Rightarrow$ \ref{sta:reducible}.: Let $f \in I(R)
    \setminus \{0\}$. We have by assumption $f \topRew 0$. By contradiction,
    assume $f$ is a normal form. Then, by Lemma~\ref{lem:charac-normal-forms},
    $f = 0$ which contradicts the definition of $f$.

    \ref{sta:reducible}. $\Rightarrow$ \ref{sta:standard-basis}.: Let $f \in
    I(R) \setminus \{0\}$. Let us show that $\LM{\op{<}}{f}$ is reducible. By
    contradiction, suppose that there is no $i \in \{\DOTS{1}{r}\}$ such that
    $\LM{\op{<}}{s_i}$ divides $\LM{\op{<}}{f}$. By
    Proposition~\ref{prop:topChainsTransRew-normalising}, there exists $\alpha
    \in \NF{\sys}$ such that $f \topChainsRew \alpha$. Since $\topChainsRew$ is
    a subrelation of $\topRew$, we get $f \topRew \alpha$. From
    Lemma~\ref{lem:f-rew-g-diff-in-ideal}, we get $f - \alpha \in I(R)$. But,
    because $f \in I(R)$, we have $\alpha \in I(R)$. Now, note how if $\alpha
    = 0$, then $f \topChainsRew 0$, and therefore $\LM{\op{<}}{f}$ is
    necessarily reducible. Otherwise, we would have $\alpha \in I(R) \setminus
    \{0\}$ and, by assumption, we would deduce that $\alpha$ is not a normal
    form; a contradiction.

    \ref{sta:standard-basis}. $\Rightarrow$ \ref{sta:topChainsRew-zero}.: Let $f
    \in I(R)$. Set $f_0 := f$. If $f_0 = 0$, it is trivial to see that $f
    \topChainsRew 0$. Assume thus $f_0 \neq 0$. Then $\LM{\op{<}}{f_0}$ is
    reducible. Rewrite it to obtain $f_1$. We thus have $f_0 \rew f_1$. If
    $f_1 = 0$, the result is clear. Otherwise, $f_1 \in I(R) \setminus \{0\}$.
    Therefore, its leading monomial is once again reducible. Rewrite it to
    obtain $f_2$, and go on like this forever or until we reach,
    for some $k \in \N$, the identity $f_k = 0$. If that is the case, then it is
    clear that $f \topChainsRew 0$. Otherwise, if there is never any $k \in \N$
    such that $f_k = 0$ then we obtain an infinite sequence $(f_k)_{k \in \N}$
    that satisfies the relations $f_0 = f$ and $f_k \rew f_{k+1}$ for all $k \in
    \N$. By Lemma~\ref{lem:cauchy-rewriting-seq}, it follows that the sequence
    converges. Moreover, since we always rewrite the leading monomial and since
    the order is of type $\omega$, being compatible with the degree, we conclude
    that the limit is $0$ which exactly means that $f \topChainsRew 0$.

    \ref{sta:topChainsRew-zero}. $\Rightarrow$ \ref{sta:standard-rep}.: This is
    exactly the content of Lemma~\ref{prop:standard-rep-standard-basis}.

    \ref{sta:standard-rep}. $\Rightarrow$ \ref{sta:remainder}.: Let $f \in
    \Kxx$. If $f$ is a normal form, the existence part of~\ref{sta:remainder}. is
    guaranteed. Otherwise, by
    Proposition~\ref{prop:topChainsTransRew-normalising}, there exists $\alpha
    \in \NF{\sys}$ different from $f$ such that $f \topChainsRew \alpha$. Now,
    since $\topChainsRew$ is a subrelation of $\topRew$, we obtain by
    Lemma~\ref{lem:f-rew-g-diff-in-ideal} that $f - \alpha \in I(R) \setminus
    \{0\}$. Hence, by assumption,  $f - \alpha$ admits a standard representation
    with respect to $R$ and $<$, which yields the existence of the cofactors
    $(\DOTS{q_1}{q_r}) \in \Kxx^r$ such that $f - \alpha = \sum_{i = 1}^{r} q_i
    s_i$. Let us now show the uniqueness of $\alpha \in \NF{\sys}$ satisfying
    that property. Suppose we have $\beta \in \NF{\sys}$ different from $\alpha$
    and $(\DOTS{q'_1}{q'_r}) \in \Kxx^r$ such that $f = \sum_{i = 1}^{r} q'_i
    s_i + \beta$. Then $\alpha - \beta = \sum_{i = 1}^{r} (q_i - q'_i) s_i \in
    I(R) \setminus \{0\}$. By assumption, $\alpha - \beta$ admits a standard
    representation with respect to $R$ and $<$. Then, according to
    Remark~\ref{rmk:standard-rep-reducible}, it follows that the leading
    monomial of $\alpha - \beta$ is reducible. However, this would mean that,
    among $\alpha$ and $\beta$, at least one of them contains a reducible
    monomial in its support, a contradiction. Hence, $\alpha = \beta$.

    \ref{sta:remainder}. $\Rightarrow$ \ref{sta:normal-forms}.: It is a rather
    straightforward reformulation.

    \ref{sta:normal-forms}. $\Rightarrow$ \ref{sta:infty-topo-conf}.: Let $f, g,
    h \in \Kxx$ such that $f \topRew g$ and $f \topRew h$. By
    Proposition~\ref{prop:topChainsTransRew-normalising}, there exist normal
    forms $\alpha,
    \beta \in \NF{\sys}$ such that $g \topChainsRew \alpha$ and $h \topChainsRew
    \beta$. As $\topChainsRew$ is a subrelation of $\topRew$, we obtain thus that:
    \[
        \alpha \topInvRew g \topInvRew f \topRew h \topRew \beta.
    \]
    It is clear from Lemma~\ref{lem:f-rew-g-diff-in-ideal} that $\alpha - \beta
    = (\alpha - g) + (g - f) + (f - h) + (h - \beta) \in I(R)$. Thus, by
    assumption, $\alpha = \beta$. Hence, the system is infinitary topologically
    confluent, since we have:
    \[
        \begin{tikzcd}
            & \ar[circle]{dl} f \ar[circle]{dr} & \\
            g \ar[circle]{dr} & & \ar[circle]{dl} h \\
                              & \alpha = \beta. &
        \end{tikzcd}    
    \]

    \ref{sta:infty-topo-conf}. $\Rightarrow$ \ref{sta:fin-topo-conf}.: This is
    the content of Remark~\ref{rmk:infty-implies-finitary}.
\end{proof}

\section*{Conclusion}
\addcontentsline{toc}{section}{Conclusion}

In this paper, we demonstrated why \Grobner{} bases fail to work for complete
local equicharacteristic Noetherian rings and we gave convenient
rewriting-theoretic characterisations of standard bases for ideals of
formal power series, which enables to compute in those rings. Further work in
that field could focus on the establishment of a ``topologised'' Newman's lemma
from the classical case. It would then in particular allow us to recover the
analogous statement of the Buchberger criterion for standard bases. Note however
how the desired end result about that S-series criterion has already been proven
in~\cite{Becker_1990}, so it would once again be about having purely
rewriting-theoretic proofs of the already known results. On other perspectives,
there is still the open problem of the \emph{chains conjecture}: given an
admissible monomial order $<$ compatible with the degree and $R$ a finite set
of rewrite rules for formal power series, are the relations $\topRew$ and
$\topChainsRew$ from the topological rewriting system $\sys$ necessarily equal?
This is in general not true for $R$ infinite. If the conjecture turns out to be
true, then the topological rewriting relation we work with in this paper would
be the analogous as the ``strongly convergent transfinite reduction relation''
of infinitary $\Sigma/\lambda$-term rewriting. This would mean that abstract
topological rewriting theory is indeed a theory encompassing both rewriting on
formal power series and infinitary term rewriting in computer science.

\backmatter%

\bmhead{Acknowledgements}

The author would like to thank Cyrille \textsc{Chenavier} and Thomas
\textsc{Cluzeau} for giving helpful feedback and comments both on the research
results presented here and for the writing process of this paper, as well as
Yann \textsc{Denichou} for providing helpful assistance in \LaTeX{} to design
the arrow $\topRew$.


\bibliography{sn-bibliography}

\end{document}